\numberwithin{equation}{section}
\numberwithin{figure}{section}
\newtheorem{theorem}{Theorem}[section]
\newtheorem{lemma}[theorem]{Lemma}
\newtheorem{corollary}[theorem]{Corollary}
\newtheorem{proposition}[theorem]{Proposition}
\theoremstyle{definition}
\newtheorem{definition}[theorem]{Definition}
\newtheorem{example}[theorem]{Example}
\theoremstyle{remark}
\date{}
\begin{document}


\begin{center}
{\bf Locally Product-like Statistical Manifolds and Their Hypersurfaces}

\bigskip

Esra Erkan, Kazuhiko Takano and    Mehmet G\"{u}lbahar
\end{center}

\bigskip

\begin{quote}
\noindent {\bf Abstract.}
In this paper, almost product-like Riemannian manifolds are investigated. Basic properties on tangential
hypersurfaces of almost product-like Riemannian manifolds are obtained. Some examples of tangential hypersurfaces are presented. Some relations involving the Riemannian curvature tensor of a tangential hypersurface are computed.

\end{quote}

{\rm \ }

\begin{quote}
\medskip

\noindent {\em AMS 2000 Mathematics Subject Classification\/}:
53C05, 53C40, 53C42.

\noindent {\em Keywords\/}: Riemann product manifold, hypersurface, statistical manifold
\end{quote}

\section{Introduction}
The concept of statistical manifolds has important application areas and various properties of statistical structures in geometric and physical terms have been studied intensively recently.
In physics, there exist various application areas of statistical manifolds such as neural networks,
machine learning, artificial intelligence, black holes \cite{Calin,Efron,Gucht,Vos}.
 Statistical manifolds were firstly introduced by S. Amari \cite{Amari-1985-book} in 1985. Later, these structures drew attention of
several authors. Some basic properties of hypersurfaces of statistical manifolds were presented by H. Furuhata in \cite{Furuhata-2009,Furuhata-2011}, hypersurfaces of Sasakian statistical manifolds were investigated by F. Wu, Y. Jiang, and L. Zhang in \cite{Feng}, hypersurfaces of almost Hermitian statistical manifolds were investigated by H.  Akbari and F. Malek in \cite{Akbari}, etc.

As a new perspective on  Hermitian statistical manifolds, the second author was firstly introduced the notion of Hermite-like manifolds in \cite{Takano,Takano:1}.
The most important aspect of this point of view is that it reveals to us a more general form of Hermitian geometry, which has a long history and on which many mathematicians, physicists, and engineers have investigated. In fact, a Hermite-like manifold becomes a Hermitian manifold when two almost complex structures  $J$ and $J^{\ast}$ are equal.

Inspired by the definition of Hermite-like manifolds, contact-like manifolds were studied in \cite{Aytimur,Erken,Takano,Takano:1},
para-Kaehler-like and quaternionic-like Kaehler manifolds were investigated in \cite{Vilcu,Vilcu:2}, the Wintgen-like inequality on statistical warped product manifolds was
computed in \cite{Murathan}, etc.

The main purpose of this paper is to present the basic properties of almost product-like Riemannian manifolds
and their hypersurfaces. With these reviews and using some obtained properties for hypersurfaces of almost
product-like Riemannian manifolds, we will be able to the opportunity to give some basic definitions of para contact-like Riemannian manifolds.

\section{Preliminaries\label{sect-prel}}
Let $(\widetilde{M},\widetilde{g})$ be a Riemannian manifold.  Denote a torsion-free affine connection by $\widetilde{\nabla}$. The triple $(\widetilde{M},\widetilde{g},\widetilde{\nabla})$ is called a statistical manifold if $\widetilde{\nabla}\widetilde{g}$ is symmetric. For the statistical manifold, we define another affine connection $\widetilde{\nabla}^{\ast}$ by
\begin{equation}
Z\widetilde{g}(X,Y) =\widetilde{g}(X,\widetilde{\nabla}_{Z}^{\ast }Y)+\widetilde{g}(\widetilde{\nabla}_{Z}X,Y), \label{eq8}
\end{equation}
 for any vector fields $X, Y$ and $Z$ on $\widetilde{M}$. The affine connection $\widetilde{\nabla}^{\ast}$ is called dual of $\widetilde{\nabla}$ with respect to $\widetilde{g}$. The affine connection $\widetilde{\nabla}^{\ast}$ is torsion-free and satisfies
$(\widetilde{\nabla}^{\ast})^{\ast}=\widetilde{\nabla}$. It is clear to see that $\widetilde{\nabla}^{\circ}=\frac{1}{2}\left(\widetilde{\nabla}+\widetilde{\nabla}^{\ast }\right)$ is a metric connection. The pair $(\widetilde{g},\widetilde{\nabla})$ is a statistical structure on $\widetilde{M}$ is and only if so is $(\widetilde{g},\widetilde{\nabla}^{\ast})$. Therefore the triple $(\widetilde{M},\widetilde{g},\widetilde{\nabla}^{\ast})$ is a statistical manifold, too. We denote by $\widetilde{R}$ and $\widetilde{R}^{\ast}$ the curvature tensors on $\widetilde{M}$ with respect to the affine connection $\widetilde{\nabla}$ and its conjugate $\widetilde{\nabla}^{\ast}$, respectively. Then we find
\begin{equation*}
 \widetilde{g}(\widetilde{R}(X,Y)Z,W)=-\widetilde{g}(Z,\widetilde{R}^{\ast}(X,Y)W)
\end{equation*}
 where $\widetilde{R}(X, Y)Z=\widetilde{\nabla}_{X}(\widetilde{\nabla}_{Y}Z)-\widetilde{\nabla}_{Y}(\widetilde{\nabla}_{X}Z)-\widetilde{\nabla}_{[X,Y]}Z$. Therefore $\widetilde{R}$ vanishes identically if and only if so is $\widetilde{R}^{\ast}$. We call flat if $\widetilde{R}$  vanishes identically. If the curvature tensor $\widetilde{R}$  with respect to the affine connection $\widetilde{\nabla}$ satisfies
\begin{equation*}
\widetilde{R}(X,Y)Z=\widetilde{c}\left\{ \widetilde{g}(Y,Z)X-\widetilde{g}(X,Z)Y \right\},
\end{equation*}
then the statistical manifold $(\widetilde{M},\widetilde{g},\widetilde{\nabla})$ is called a space of constant curvature $\widetilde{c}$.

Suppose that $J$ and $J^{\ast}$ are  almost complex structures  of type $(1,1)$ satisfying the condition
\begin{eqnarray}
\widetilde{g}\left(JX,Y\right)=-\widetilde{g}\left(X,J^{\ast}Y\right) \label{com:1}
\end{eqnarray}
for any tangent vector fields $X,Y\in \Gamma(T\widetilde{M})$. Then $(\widetilde{M},\widetilde{g},J)$ is called an almost Hermite-like manifold.

Let $F$ be a tensor field of type $(1,1)$ on $\widetilde{M}$. If
\begin{equation} \label{eq1}
    F^{2}=I,
\end{equation}
where $I$ denotes the identity map, then the pair $(\widetilde{M}, F)$  is called an almost product manifold with
almost product structure $F$. For any almost product manifold, there exist two projections $P$ and $Q$ satisfying
\begin{eqnarray*}
F=P-Q.
\end{eqnarray*}
 If $F=I$ (resp. $F=-I$), we find $P=I$, $Q=0$ (resp. $P=0$, $Q=I$).  We discuss the case of $F\neq \pm I$ in this paper.
If an almost product manifold admits a Riemannian metric $\widetilde{g}$ such that the condition
\begin{eqnarray*}
\widetilde{g}(F X,Y)=\widetilde{g}(X,FY)
\end{eqnarray*}
or, equivalently,
\begin{eqnarray*}
\widetilde{g}(FX,FY)=\widetilde{g}(X,Y)
\end{eqnarray*}
is satisfied for any $X,Y\in \Gamma(T\widetilde{M})$, then $(\widetilde{M}, \widetilde{g},F)$ is called an almost product Riemannian manifold \cite{Yano}.

Let $(\widetilde{M},\widetilde{g},F)$ be an almost product Riemannian manifold and $\widetilde{\nabla}^{\circ}$ be the Levi-Civita connection of $(\widetilde{M}, \widetilde{g},F)$. In particular, if $\widetilde{\nabla}^{{\circ}}F=0$, then $(\widetilde{M}, \widetilde{g},F)$ is called a locally product Riemannian manifold \cite{Yano}.

For any hypersurface $(M,g)$ of $(\widetilde{M},\widetilde{g},F)$, the Gauss and Weingarten formulas are given as follows:
\begin{equation}
\widetilde{\nabla}^{{\circ}}_{X}Y=\nabla^{{\circ}}_{X}Y+g(A^{{\circ}}_{N}X,Y)N,
\end{equation}
\begin{equation}
\widetilde{\nabla}^{{\circ}}_{X}N=-A^{{\circ}}_{N}X \label{Wein:o}
\end{equation}
for any $X,Y\in \Gamma(TM)$, where $N$ is the unit vector field, $\nabla^{{\circ}}$ is the induced linear connection and $A^{{\circ}}_N$ is the shape operator on $(M,g)$.

The hypersurface is called totally geodesic if $A^{{\circ}}_N$ vanishes identically, and it is called  totally umbilical if there exists a smooth function $\rho^{\circ}$ on $M$ such that
\begin{equation} \label{eq4}
    A^{{\circ}}_NX=\rho^{\circ} X
\end{equation}
for any $X \in \Gamma(TM)$. Furthermore, $(M,g)$ is called minimal if
\begin{equation} \label{eq5}
    \textrm{Tr} \ A^{{\circ}}_N=0.
\end{equation}

For more details related to hypersurfaces and submanifolds, we refer to \cite{Chen}.

\section{Almost Product-like Riemannian Manifolds \label{product-like}}

\begin{definition}
 If the Riemannian manifold $(\widetilde{M},\widetilde{g})$ with the almost product structure $F$ which has another tensor field $F^{\ast}$ of type $(1, 1)$ satisfying
\begin{equation} \label{eq6}
 \widetilde{g}(FX,Y)=\widetilde{g}(X,F^{\ast}Y)
\end{equation}
 for any  $X,Y\in \Gamma(T\widetilde{M})$, then the triple $(\widetilde{M},\widetilde{g},F)$ is called an almost product-like Riemannian manifold.
\end{definition}

 We see that $(F^{\ast})^{\ast}=F$, $(F^{\ast})^2=I$ and
\begin{equation} \label{eq7}
 \widetilde{g}(FX,F^{\ast}Y)=\widetilde{g}(X,Y).
\end{equation}

\begin{lemma} \label{Takano:lemma:1}
 The triple $(\widetilde{M},\widetilde{g},F)$ is an almost product-like Riemannian manifold if and only if so is $(\widetilde{M},\widetilde{g},F^{\ast})$.
\end{lemma}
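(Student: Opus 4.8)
The plan is to read off the statement almost directly from the defining relation \eqref{eq6} together with the three remarks already recorded after the definition, namely $(F^{\ast})^{\ast}=F$, $(F^{\ast})^{2}=I$ and $\widetilde{g}(FX,F^{\ast}Y)=\widetilde{g}(X,Y)$; the only genuine work is to justify those remarks from the non-degeneracy of $\widetilde{g}$, and then the lemma is the observation that passing to $F^{\ast}$ is an involution on almost product-like structures.

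First I would establish the forward implication. Assume $(\widetilde{M},\widetilde{g},F)$ is almost product-like. To show $(\widetilde{M},\widetilde{g},F^{\ast})$ is almost product-like I must verify the two clauses of the definition for $F^{\ast}$: that $F^{\ast}$ is an almost product structure, i.e. $(F^{\ast})^{2}=I$, and that there is a $(1,1)$-tensor playing the role of its dual with respect to \eqref{eq6}. For the first, substitute $FX$ for $X$ in \eqref{eq6} and use $F^{2}=I$ from \eqref{eq1} to obtain $\widetilde{g}(X,Y)=\widetilde{g}(FX,F^{\ast}Y)=\widetilde{g}(X,(F^{\ast})^{2}Y)$ for all $X,Y\in\Gamma(T\widetilde{M})$; since $\widetilde{g}$ is a Riemannian metric, hence non-degenerate, this forces $(F^{\ast})^{2}=I$. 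For the second, the candidate dual tensor of $F^{\ast}$ is $F$ itself: using the symmetry of $\widetilde{g}$ and \eqref{eq6} with the roles of the arguments swapped, $\widetilde{g}(F^{\ast}X,Y)=\widetilde{g}(Y,F^{\ast}X)=\widetilde{g}(FY,X)=\widetilde{g}(X,FY)$, which is exactly the relation \eqref{eq6} for the pair $(F^{\ast},F)$. Thus $(\widetilde{M},\widetilde{g},F^{\ast})$ is almost product-like with $(F^{\ast})^{\ast}=F$.

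For the converse I would simply invoke symmetry: the computation just made also shows $(F^{\ast})^{\ast}=F$ and $(F^{\ast})^{2}=I$, so $F^{\ast}$ is itself a tensor field to which the definition applies and whose dual is $F$. Applying the forward implication with $F$ and $F^{\ast}$ interchanged then gives that $(\widetilde{M},\widetilde{g},F)$ is almost product-like whenever $(\widetilde{M},\widetilde{g},F^{\ast})$ is. (If one wishes to respect the blanket hypothesis $F\neq\pm I$, note that $F^{\ast}=\pm I$ would turn \eqref{eq6} into $\widetilde{g}(FX,Y)=\pm\widetilde{g}(X,Y)$ for all $X,Y$, forcing $F=\pm I$, a contradiction; so $F^{\ast}\neq\pm I$ as well.)

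The only step I would flag as the main obstacle — and it is a mild one — is the passage from the bilinear-form identities to the operator identity $(F^{\ast})^{2}=I$: this relies on the fact that $\widetilde{g}$ is a genuine (positive-definite) Riemannian metric, so that $\widetilde{g}(X,Z)=\widetilde{g}(X,W)$ for every $X$ implies $Z=W$. Everything else is a direct manipulation of \eqref{eq6} and the symmetry of $\widetilde{g}$, with no further input needed.
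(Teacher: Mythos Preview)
Your argument is correct and is exactly the approach the paper takes: the lemma is stated immediately after the observations $(F^{\ast})^{\ast}=F$, $(F^{\ast})^{2}=I$ and \eqref{eq7}, and is left without further proof precisely because it follows from those remarks together with \eqref{eq6} in the way you describe. You have simply spelled out the details (including the non-degeneracy step and the check that $F^{\ast}\neq\pm I$) that the paper leaves implicit.
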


\begin{example} \label{ex1}
Let $\mathbb{R}^{4}$  be a smooth manifold with local coordinate system $(x_{1},x_{2},x_{3},x_{4})$, which admits
the following almost product structure $F$
\begin{equation*}
 F=\begin{pmatrix}
 0&0  &1  & 0 \\
0 & 0 & 0 & 1 \\
1 & 0 & 0 & 0  \\
0 & 1 & 0 & 0 \\
\end{pmatrix}.
\end{equation*}
Thus the pair $(\mathbb{R}^{4}, F)$ is an almost product manifold. If we put
\begin{equation*}
 \widetilde{g}=\begin{pmatrix}
 1+e^{-x_{1}+x_{3}}&0  &0  & 0 \\
0 & e^{x_{1}-x_{3}} & 0 & 0 \\
0 & 0 & e^{-x_{1}+x_{3}} & 0  \\
0 & 0 & 0 & e^{x_{1}-x_{3}} \\
\end{pmatrix}
\end{equation*}
and
\begin{equation*}
F^{\ast}=\begin{pmatrix}
0&0  &(1+e^{x_{1}-x_{3}})^{-1}  & 0 \\
0 & 0 & 0 & 1 \\
1+e^{x_{1}-x_{3}} & 0 & 0 & 0  \\
0 & 1 & 0 & 0 \\
\end{pmatrix},
\end{equation*}
then the triple $(\mathbb{R}^{4},\widetilde{g},F)$   is an almost product-like Riemannian manifold and so is $(\mathbb{R}^{4},\widetilde{g},F^{\ast})$.
\end{example}
\begin{definition}
 If $F$ is parallel with respect to $\widetilde{\nabla}$, then $(\widetilde{M}, \widetilde{g},\widetilde{\nabla}, F)$ is called a locally product-like statistical manifold.
\end{definition}

From (\ref{eq6}), we get
\begin{eqnarray*}
\widetilde{g}\left((\widetilde{\nabla}_{X}F)Y,Z\right)=\widetilde{g}\left(Y,(\widetilde{\nabla}^{\ast}_{X}F^{\ast})Z\right).
\end{eqnarray*}
Hence we have
\begin{lemma} \label{Takano:lemma:2}
 $(\widetilde{M}, \widetilde{g},\widetilde{\nabla}, F)$ is a locally product-like statistical manifold if and only if so is $(\widetilde{M}, \widetilde{g},\widetilde{\nabla}^{\ast}, F^{\ast})$.
\end{lemma}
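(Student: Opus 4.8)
The plan is to exploit the pairing identity displayed immediately before the statement, namely $\widetilde{g}\bigl((\widetilde{\nabla}_{X}F)Y,Z\bigr)=\widetilde{g}\bigl(Y,(\widetilde{\nabla}^{\ast}_{X}F^{\ast})Z\bigr)$, which I would first justify carefully. Starting from the defining relation (\ref{eq6}), $\widetilde{g}(FY,Z)=\widetilde{g}(Y,F^{\ast}Z)$, I would differentiate both sides along an arbitrary vector field $X$ and expand each side by the duality formula (\ref{eq8}); the left side becomes $\widetilde{g}(\widetilde{\nabla}_{X}(FY),Z)+\widetilde{g}(FY,\widetilde{\nabla}^{\ast}_{X}Z)$ and the right side $\widetilde{g}(\widetilde{\nabla}_{X}Y,F^{\ast}Z)+\widetilde{g}(Y,\widetilde{\nabla}^{\ast}_{X}(F^{\ast}Z))$. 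Writing $\widetilde{\nabla}_{X}(FY)=(\widetilde{\nabla}_{X}F)Y+F\widetilde{\nabla}_{X}Y$ and $\widetilde{\nabla}^{\ast}_{X}(F^{\ast}Z)=(\widetilde{\nabla}^{\ast}_{X}F^{\ast})Z+F^{\ast}\widetilde{\nabla}^{\ast}_{X}Z$, and then using (\ref{eq6}) once more to cancel $\widetilde{g}(F\widetilde{\nabla}_{X}Y,Z)=\widetilde{g}(\widetilde{\nabla}_{X}Y,F^{\ast}Z)$ against the corresponding term and likewise $\widetilde{g}(FY,\widetilde{\nabla}^{\ast}_{X}Z)=\widetilde{g}(Y,F^{\ast}\widetilde{\nabla}^{\ast}_{X}Z)$, the stated identity drops out. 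The only care needed here is keeping track of which of $\widetilde{\nabla}$ and $\widetilde{\nabla}^{\ast}$ occurs in each Leibniz term.

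Next I would settle the equivalence. First note the dual structure is meaningful: $(\widetilde{M},\widetilde{g},\widetilde{\nabla}^{\ast})$ is a statistical manifold (recalled in Section~\ref{sect-prel}) and $(\widetilde{M},\widetilde{g},F^{\ast})$ is almost product-like by Lemma~\ref{Takano:lemma:1}, so the assertion that ``$(\widetilde{M},\widetilde{g},\widetilde{\nabla}^{\ast},F^{\ast})$ is a locally product-like statistical manifold'' means precisely $\widetilde{\nabla}^{\ast}F^{\ast}=0$. Now suppose $(\widetilde{M},\widetilde{g},\widetilde{\nabla},F)$ is locally product-like statistical, i.e.\ $\widetilde{\nabla}F=0$; then $(\widetilde{\nabla}_{X}F)Y=0$ for all $X,Y$, so the left-hand side of the identity vanishes identically, giving $\widetilde{g}\bigl(Y,(\widetilde{\nabla}^{\ast}_{X}F^{\ast})Z\bigr)=0$ for every $Y$. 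Since $\widetilde{g}$ is a Riemannian (hence nondegenerate) metric, this forces $(\widetilde{\nabla}^{\ast}_{X}F^{\ast})Z=0$ for all $X,Z$, that is, $\widetilde{\nabla}^{\ast}F^{\ast}=0$.

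For the converse I would simply apply what was just proved to the triple $(\widetilde{M},\widetilde{g},\widetilde{\nabla}^{\ast},F^{\ast})$ in place of $(\widetilde{M},\widetilde{g},\widetilde{\nabla},F)$; using $(\widetilde{\nabla}^{\ast})^{\ast}=\widetilde{\nabla}$ and $(F^{\ast})^{\ast}=F$, both recorded in the text, the implication $\widetilde{\nabla}^{\ast}F^{\ast}=0\Rightarrow\widetilde{\nabla}F=0$ is immediate. (Equivalently, one may rerun the pairing-identity argument with the roles of $F,F^{\ast}$ and of $\widetilde{\nabla},\widetilde{\nabla}^{\ast}$ interchanged.) I do not anticipate any genuine obstacle: the substantive point is the derivation of the pairing identity from (\ref{eq6}) and (\ref{eq8}), after which nondegeneracy of $\widetilde{g}$ does all the remaining work.
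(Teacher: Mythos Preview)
Your proposal is correct and follows exactly the paper's approach: the paper simply records the identity $\widetilde{g}\bigl((\widetilde{\nabla}_{X}F)Y,Z\bigr)=\widetilde{g}\bigl(Y,(\widetilde{\nabla}^{\ast}_{X}F^{\ast})Z\bigr)$ and concludes the lemma, while you have spelled out its derivation from (\ref{eq6}) and (\ref{eq8}) and the nondegeneracy argument in full detail.
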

\begin{lemma} \label{Takano:lemma:3}
Let $(\widetilde{M}, \widetilde{g},\widetilde{\nabla}, F)$ be a locally product-like statistical manifold. If $\widetilde{M}$ is of constant curvature $\widetilde{c}$, then $\widetilde{c}=0$, that is, $\widetilde{M}$ is flat.
\end{lemma}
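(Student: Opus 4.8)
The plan is to exploit the parallelism of $F$ together with the assumed constant‑curvature form of $\widetilde{R}$ and derive an algebraic contradiction unless $\widetilde{c}=0$. First I would record the curvature identity that follows from $\widetilde{\nabla}F=0$: since $F$ is parallel, the curvature operator commutes with $F$, i.e.
\begin{equation*}
\widetilde{R}(X,Y)(FZ)=F\bigl(\widetilde{R}(X,Y)Z\bigr)
\end{equation*}
for all $X,Y,Z\in\Gamma(T\widetilde{M})$. This is the standard consequence of differentiating $\widetilde{\nabla}F=0$ twice and antisymmetrizing; it uses only that $F$ is $\widetilde{\nabla}$‑parallel, nothing about $F^{2}=I$ yet.

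Next I would substitute the constant‑curvature expression $\widetilde{R}(X,Y)Z=\widetilde{c}\{\widetilde{g}(Y,Z)X-\widetilde{g}(X,Z)Y\}$ into both sides of that commutation relation. The left‑hand side becomes $\widetilde{c}\{\widetilde{g}(Y,FZ)X-\widetilde{g}(X,FZ)Y\}$, and the right‑hand side becomes $\widetilde{c}\{\widetilde{g}(Y,Z)FX-\widetilde{g}(X,Z)FY\}$. Using \eqref{eq6} to rewrite $\widetilde{g}(Y,FZ)=\widetilde{g}(F^{\ast}Y,Z)$ and similarly $\widetilde{g}(X,FZ)=\widetilde{g}(F^{\ast}X,Z)$, the identity reads
\begin{equation*}
\widetilde{c}\bigl\{\widetilde{g}(F^{\ast}Y,Z)X-\widetilde{g}(F^{\ast}X,Z)Y\bigr\}=\widetilde{c}\bigl\{\widetilde{g}(Y,Z)FX-\widetilde{g}(X,Z)FY\bigr\}.
\end{equation*}
The remaining work is to show this forces $\widetilde{c}=0$. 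I would fix a point $p$, pick $Z$ and a vector $X$ with $X,Y$ linearly independent, and choose $Z$ so that the coefficients on one side do not match those on the other — concretely, specializing $Y$ and $Z$ and comparing components against $X$ (e.g. take $Z=X$ with $X$ not an eigenvector‑type configuration, or contract with a suitable covector) yields an equation of the form $\widetilde{c}\,(\text{nonzero vector})=0$. Since $F\ne\pm I$ by the standing assumption, one can always arrange the bracketed vectors on the two sides to be genuinely different, so $\widetilde{c}$ must vanish.

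The main obstacle is the last step: one must be careful that the specialization of $X,Y,Z$ actually produces a \emph{nonvanishing} vector coefficient rather than a tautology, and this is where the hypothesis $F\ne\pm I$ (equivalently, both projections $P,Q$ nontrivial) is essential — if $F=\pm I$ the identity is vacuous. A clean way to finish is to contract the displayed equation with $\widetilde g(\,\cdot\,,W)$ for arbitrary $W$, obtaining a scalar identity in $X,Y,Z,W$, then antisymmetrize or trace appropriately (for instance set $Y=Z$ and trace over them) to isolate $\widetilde c\,\widetilde g\bigl((F-F^{\ast}\!-\text{something})X,W\bigr)=0$; choosing $X,W$ in the image of $P-Q$ where $F$ acts as $\pm 1$ but $F^{\ast}$ need not, one reads off $\widetilde c=0$. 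I would present this as a short computation once the commutation identity is in hand, with the linear‑independence/eigenvector bookkeeping being the only place requiring genuine care.
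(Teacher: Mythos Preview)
Your approach is the paper's: parallelism of $F$ gives $\widetilde{R}(X,Y)FZ=F\widetilde{R}(X,Y)Z$, and substituting the constant-curvature form yields exactly your displayed identity. The paper's finishing move is the trace you mention only in passing: assuming $\widetilde{c}\neq 0$, put $Y=Z=e_{\alpha}$ and sum over an orthonormal frame $\{e_{1},\dots,e_{m}\}$ to obtain $mFX-(\operatorname{tr}F)X=0$; since $F^{2}=I$ this forces $\operatorname{tr}F=\pm m$ and hence $F=\pm I$, contradicting the standing hypothesis $F\neq\pm I$.

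So the plan is right, but you should commit to that single contraction rather than the detours through $F^{\ast}$, eigenspace selections in the image of $P-Q$, and unspecified antisymmetrizations. None of those are needed, and as written they leave the decisive step at the level of a promise rather than a computation; the trace over $Y=Z$ is a two-line finish once the identity is in hand.
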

\begin{proof}
From $\widetilde{R}(X,Y)FZ=F\widetilde{R}(X,Y)Z$, we find
\begin{equation*}
    \widetilde{c} \left\{ \widetilde{g}(Y,FZ)X-\widetilde{g}(X,FZ)Y\right\}=\widetilde{c} \left\{ \widetilde{g}(Y,Z)FX-\widetilde{g}(X,Z)FY\right\}.
\end{equation*}
If $\widetilde{c}\neq 0$, then we get $mFX-(\text{tr} F)X=0$, where $m=\textrm{dim} \widetilde{M}$ and $\textrm{tr}F=\sum\limits _{\alpha=1}^{m}\widetilde{g}(Fe_{\alpha},e_{\alpha})$
for local orthonormal basis $\{e_{1},\ldots,e_{m}\}$ of $\widetilde{M}$.  It is clear that $\text{tr} F=\mp m$, which yields that $F=\mp I$ holds. Thus we obtain $\widetilde{c}=0$.
\end{proof}

\begin{example} \label{exx2}
 Let $(\mathbb{R}^{4},\widetilde{g},F)$ be an almost product-like Riemannian manifold of Example \ref{ex1}. We put the affine connection $\widetilde{\nabla}$ as follows:
\begin{align*}
{\widetilde{\nabla}_{\partial_{1}}} {\partial_{1}}&=\widetilde{\nabla}_{\partial_{1}} \partial_{3}=\widetilde{\nabla}_{\partial_{3}} \partial_{1}=\widetilde{\nabla}_{\partial_{3}} \partial_{3}=e^{-x_{1}+x_{3}}(\partial_{1}+\partial_{3}),  \\
{\widetilde{\nabla}_{\partial_{1}}} {\partial_{2}}&={\widetilde{\nabla}_{\partial_{2}}} {\partial_{1}}={\widetilde{\nabla}_{\partial_{3}}} {\partial_{4}}={\widetilde{\nabla}_{\partial_{4}}} {\partial_{3}}=-e^{-x_{1}+x_{3}}\partial_{2}-(1+e^{-x_{1}+x_{3}})\partial_{4},\\
{\widetilde{\nabla}_{\partial_{1}}} {\partial_{4}}&={\widetilde{\nabla}_{\partial_{4}}} {\partial_{1}}={\widetilde{\nabla}_{\partial_{2}}} {\partial_{3}}={\widetilde{\nabla}_{\partial_{3}}} {\partial_{2}}=-(1+e^{-x_{1}+x_{3}})\partial_{2}-e^{-x_{1}+x_{3}}\partial_{4},\\
{\widetilde{\nabla}_{\partial_{2}}} {\partial_{2}}&={\widetilde{\nabla}_{\partial_{4}}} {\partial_{4}}=-e^{x_{1}-x_{3}}\partial_{1}+\partial_{2}-e^{x_{1}-x_{3}}\partial_{3}-\partial_{4},\\
{\widetilde{\nabla}_{\partial_{2}}} {\partial_{4}}&={\widetilde{\nabla}_{\partial_{4}}} {\partial_{2}}=-e^{x_{1}-x_{3}}\partial_{1}-\partial_{2}-e^{x_{1}-x_{3}}\partial_{3}+\partial_{4}.
\end{align*}
Then we find
\begin{align*}
 {\widetilde{\nabla}^{\ast}_{\partial_{1}}}{\partial_{1}}&=-\frac{e^{-x_{1}+x_{3}}(2+e^{-x_{1}+x_{3}})}{1+e^{-x_{1}+x_{3}}}{\partial_{1}}-(1+e^{-x_{1}+x_{3}}){\partial_{3}},\\
{\widetilde{\nabla}^{\ast}_{\partial_{1}}} {\partial_{2}}&={\widetilde{\nabla}^{\ast}_{\partial_{2}}} {\partial_{1}}={\widetilde{\nabla}^{\ast}_{\partial_{1}}} {\partial_{4}}={\widetilde{\nabla}^{\ast}_{\partial_{4}}} {\partial_{1}}=(1+e^{-x_{1}+x_{3}})(\partial_{2}+\partial_{4}),\\
{\widetilde{\nabla}^{\ast}_{\partial_{1}}} {\partial_{3}}&={\widetilde{\nabla}^{\ast}_{\partial_{3}}} {\partial_{1}}=-\frac{e^{-2(x_{1}-x_{3})}}{1+e^{-x_{1}+x_{3}}}{\partial_{1}}-(1+e^{-x_{1}+x_{3}}){\partial_{3}}, \\
{\widetilde{\nabla}^{\ast}_{\partial_{2}}}{\partial_{2}}&={\widetilde{\nabla}^{\ast}_{\partial_{4}}}{\partial_{4}}=\frac{1}{1+e^{-x_{1}+x_{3}}}{\partial_{1}}-{\partial_{2}}+\frac{1+e^{-x_{1}+x_{3}}}{e^{-2(x_{1}-x_{3})}}{\partial_{3}}+{\partial_{4}},  \\
{\widetilde{\nabla}^{\ast}_{\partial_{2}}} {\partial_{3}}&={\widetilde{\nabla}^{\ast}_{\partial_{3}}} {\partial_{2}}={\widetilde{\nabla}^{\ast}_{\partial_{3}}}{\partial_{4}}={\widetilde{\nabla}^{\ast}_{\partial_{4}}} {\partial_{3}}=e^{-x_{1}+x_{3}}({\partial_{2}+{\partial_{4}}}),   \\
{\widetilde{\nabla}^{\ast}_{\partial_{2}}}{\partial_{4}}&={\widetilde{\nabla}^{\ast}_{\partial_{4}}}{\partial_{2}}={e^{x_{1}-x_{3}}}{\partial_{1}}+{\partial_{2}}+{e^{x_{1}-x_{3}}}{\partial_{3}}-{\partial_{4}},   \\
{\widetilde{\nabla}^{\ast}_{\partial_{3}}}{\partial_{3}}&=-\frac{e^{-2(x_{1}-x_{3})}}{1+e^{-x_{1}+x_{3}}}{\partial_{1}}+(1-e^{-x_{1}+x_{3}}){\partial_{3}}.
\end{align*}
Therefore $(\mathbb{R}^{4},\widetilde{g},F,\widetilde{\nabla})$  is a locally product-like statistical manifold and
so is $(\mathbb{R}^{4},\widetilde{g},F^{\ast},\widetilde{\nabla}^{\ast})$.
\end{example}

Next, let $(M,g)$ be a submanifold of the statistical manifold $\widetilde{M}$. The Gauss and Weingarten formulas with respect to affine connections $\widetilde{\nabla}$ and $\widetilde{\nabla}^{\ast}$ are denoted by
\begin{align*}
\widetilde{\nabla}_{X}Y&=\nabla_{X}Y+\sigma(X,Y),  \\
\widetilde{\nabla}_{X}V&=-A_{V}X+D_{X}V,  \\
\widetilde{\nabla}_{X}^{\ast}Y&=\nabla_{X}^{\ast}Y+\sigma^{\ast}(X,Y),  \\
\widetilde{\nabla}_{X}^{\ast}V&=-A_{V}^{\ast}X+D_{X}^{\ast}V
\end{align*}
for any vector fields $X,Y$ tangent to $M$ and $V$ normal to $M$, respectively. Because of $\widetilde{\nabla}$ (resp. $\widetilde{\nabla}^{\ast}$) is torsion-free, the affine connection $\nabla$ (resp. $\nabla^{\ast}$) is torsion-free and the second fundamental form $\sigma$ (resp. $\sigma^{\ast}$) is symmetric. Affine connections $\nabla$ and $\nabla^{\ast}$ are dual each other. The triple $(M,g,\nabla)$ (resp. $(M,g,\nabla^{\ast})$) is a statistical submanifold of $\widetilde{M}$. Also, $D$ and $D^{\ast}$ are dual affine connections each other. From $(\widetilde{\nabla}^{\ast})^{\ast}=\widetilde{\nabla}$, we find $(\nabla^{\ast})^{\ast}=\nabla$, $(\sigma^{\ast})^{\ast}=\sigma$, $(A^{\ast})^{\ast}=A$ and $(D^{\ast})^{\ast}=D$. If $\sigma=0$ (resp. $\sigma^{\ast}=0$), then $M$ is said to be totally geodesic with respect to $\widetilde{\nabla}$ (resp. $\widetilde{\nabla}^{\ast}$). H. Furuhata and  I. Hasegawa \cite{Furuhata-2016} studied statistical submanifolds.

\begin{lemma} \label{Takano:lemma:4}
For any vector fields $X,Y$ tangent to $M$ and $V$ normal to $M$, we get
\begin{align*}
    \widetilde{g}(\sigma(X,Y),V)&=g(Y,A^{\ast}_{V}X), \\
     \widetilde{g}(\sigma^{\ast}(X,Y),V)&=g(Y,A_{V}X).
\end{align*}
The second fundamental form $\sigma$ (resp. $\sigma^{\ast}$) vanishes zero if and only if the shape operator $A^{\ast}$ (resp. $A$) so is.
\end{lemma}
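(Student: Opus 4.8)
The plan is to read both identities off the duality relation (\ref{eq8}), using only the fact that $\widetilde{g}(Y,V)=0$ whenever $Y$ is tangent to $M$ and $V$ is normal to $M$. First I would fix such $X,Y,V$ and apply (\ref{eq8}) with $Z=X$, with $Y$ in the first argument and $V$ in the second, obtaining
\[
0=X\widetilde{g}(Y,V)=\widetilde{g}(Y,\widetilde{\nabla}^{\ast}_{X}V)+\widetilde{g}(\widetilde{\nabla}_{X}Y,V).
\]
Then I would substitute the Weingarten formula $\widetilde{\nabla}^{\ast}_{X}V=-A^{\ast}_{V}X+D^{\ast}_{X}V$ into the first term and the Gauss formula $\widetilde{\nabla}_{X}Y=\nabla_{X}Y+\sigma(X,Y)$ into the second. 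Since $A^{\ast}_{V}X$ and $\nabla_{X}Y$ are tangent while $D^{\ast}_{X}V$ and $\sigma(X,Y)$ are normal, the terms $\widetilde{g}(Y,D^{\ast}_{X}V)$ and $\widetilde{g}(\nabla_{X}Y,V)$ vanish, and $\widetilde{g}$ restricted to $TM$ coincides with $g$; what remains is $0=-g(Y,A^{\ast}_{V}X)+\widetilde{g}(\sigma(X,Y),V)$, which is the first identity.

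For the second identity I would repeat the same computation with the roles of $\widetilde{\nabla}$ and $\widetilde{\nabla}^{\ast}$ interchanged, which is legitimate because $(\widetilde{\nabla}^{\ast})^{\ast}=\widetilde{\nabla}$; now the starred Gauss and Weingarten formulas enter, and the identical tangent/normal splitting gives $\widetilde{g}(\sigma^{\ast}(X,Y),V)=g(Y,A_{V}X)$. For the final equivalence, if $\sigma=0$ then the first identity forces $g(Y,A^{\ast}_{V}X)=0$ for every tangent $Y$; since $g$ is nondegenerate on $TM$ and $A^{\ast}_{V}X$ is tangent, $A^{\ast}_{V}X=0$ for all $X$ and $V$, hence $A^{\ast}=0$. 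Conversely, if $A^{\ast}=0$ then $\widetilde{g}(\sigma(X,Y),V)=0$ for every normal $V$, and since $\sigma(X,Y)$ is itself normal and $\widetilde{g}$ is nondegenerate on the normal bundle, $\sigma(X,Y)=0$. The argument relating $\sigma^{\ast}$ and $A$ is word for word the same.

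The computation is routine; the only place that demands care is the bookkeeping of which connection is paired with which slot in (\ref{eq8}), so that the conjugate shape operator $A^{\ast}$ (and not $A$) appears in the identity for $\sigma$, with the pairing reversed for $\sigma^{\ast}$. Beyond this the sole analytic input is the nondegeneracy of the ambient Riemannian metric on $TM$ and on the normal bundle of $M$, which is automatic here.
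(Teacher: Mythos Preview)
Your argument is correct. The paper states this lemma without proof, and your derivation from the duality relation (\ref{eq8}) together with the Gauss and Weingarten formulas is exactly the standard computation one expects here; the bookkeeping of which connection sits in which slot is handled correctly, so that $A^{\ast}$ pairs with $\sigma$ and $A$ with $\sigma^{\ast}$.
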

\begin{corollary} \label{Takano:cor:1}
 We find
 \begin{align*}
     g(A^{\ast}_{V}X,Y)&=g(X,A^{\ast}_{V}Y), \\
     g(A_{V}X,Y)&=g(X,A_{V}Y).
 \end{align*}
\end{corollary}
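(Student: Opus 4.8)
The plan is to derive both identities directly from Lemma \ref{Takano:lemma:4} together with the symmetry of the second fundamental forms. Recall from the discussion preceding Lemma \ref{Takano:lemma:4} that, since $\widetilde{\nabla}$ and $\widetilde{\nabla}^{\ast}$ are torsion-free, the induced second fundamental forms $\sigma$ and $\sigma^{\ast}$ are symmetric, i.e. $\sigma(X,Y)=\sigma(Y,X)$ and $\sigma^{\ast}(X,Y)=\sigma^{\ast}(Y,X)$ for all $X,Y\in\Gamma(TM)$. This is the only structural fact needed.

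First I would take an arbitrary normal vector field $V$ and tangent vector fields $X,Y$, and apply the first identity of Lemma \ref{Takano:lemma:4} in two ways: once as stated, giving $\widetilde{g}(\sigma(X,Y),V)=g(Y,A^{\ast}_{V}X)$, and once with the roles of $X$ and $Y$ interchanged, giving $\widetilde{g}(\sigma(Y,X),V)=g(X,A^{\ast}_{V}Y)$. By the symmetry $\sigma(X,Y)=\sigma(Y,X)$ the two left-hand sides coincide, hence $g(Y,A^{\ast}_{V}X)=g(X,A^{\ast}_{V}Y)$; using the symmetry of $g$ this rearranges to $g(A^{\ast}_{V}X,Y)=g(X,A^{\ast}_{V}Y)$, which is the first asserted equality.

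For the second identity I would repeat the same argument verbatim, now using the second identity of Lemma \ref{Takano:lemma:4}, namely $\widetilde{g}(\sigma^{\ast}(X,Y),V)=g(Y,A_{V}X)$, together with the symmetry of $\sigma^{\ast}$. Interchanging $X$ and $Y$ and comparing yields $g(A_{V}X,Y)=g(X,A_{V}Y)$. Alternatively, since $(\widetilde{\nabla}^{\ast})^{\ast}=\widetilde{\nabla}$ and correspondingly $(A^{\ast})^{\ast}=A$ and $(\sigma^{\ast})^{\ast}=\sigma$, the second statement is just the first one applied to the dual statistical structure $(M,g,\nabla^{\ast})$, so it requires no separate computation.

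I do not expect any genuine obstacle here: the corollary is an immediate formal consequence of Lemma \ref{Takano:lemma:4}. The only point worth being careful about is making explicit that the symmetry of $\sigma$ and $\sigma^{\ast}$ is what is doing the work, rather than any self-adjointness hypothesis on the shape operators themselves — indeed, the content of the corollary is precisely that self-adjointness of $A_{V}$ and $A^{\ast}_{V}$ is automatic in the statistical setting, inherited from the symmetry of the second fundamental forms.
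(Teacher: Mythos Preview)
Your proof is correct and is exactly the argument the paper intends: the corollary is stated immediately after Lemma~\ref{Takano:lemma:4} without proof precisely because it follows by combining that lemma with the symmetry of $\sigma$ and $\sigma^{\ast}$, as you do.
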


For any vector field $X$ tangent to $M$, we put (\cite{Yano})
\begin{align*}
FX = fX + hX,  \quad \quad F^{\ast}X=f^{\ast}X+h^{\ast}X,
\end{align*}
where $fX$ (resp. $f^{\ast}X$) is the tangent part of $FX$ (resp. $F^{\ast}X$) and $hX$ (resp. $h^{\ast}X$) the normal part of $FX$ (resp. $F^{\ast}X$). For any vector field $V$ normal to $M$ we set
\begin{align*}
FV = tV + sV,  \quad \quad F^{\ast}V=t^{\ast}V+s^{\ast}V,
\end{align*}
where $tV$ (resp. $t^{\ast}V$) is the tangent part of $FV$ (resp. $F^{\ast}V$) and $sV$ (resp. $s^{\ast}V$) the normal part of $FV$ (resp. $F^{\ast}V$). From $(F^{\ast})^{\ast}=F$, $F^{2}=I$ and $(F^{\ast})^{2}=I$, we get
\begin{equation*}
 (f^{\ast})^{\ast}=f, \quad \quad \quad (h^{\ast})^{\ast}=h, \quad \quad \quad (t^{\ast})^{\ast}=t, \quad \quad \quad (s^{\ast})^{\ast}=s,
\end{equation*}
\begin{equation*}
 f^{2}X=X-thX, \quad hfX+shX=0, \quad ftV+tsV=0, \quad s^{2}V=V-htV
\end{equation*}
and
\begin{align*}
(f^{\ast})^{2}X&=X-t^{\ast}h^{\ast}X, & h^{\ast}f^{\ast}X+s^{\ast}h^{\ast}X&=0, \\
f^{\ast}t^{\ast}V+t^{\ast}s^{\ast}V&=0,  & (s^{\ast})^{2}V&=V-h^{\ast}t^{\ast}V.
\end{align*}
Because of $\widetilde{g}(FE,G)=\widetilde{g}(E,F^{\ast}G)$ for any vector fields $E$ and $G$ on $\widetilde{M}$, we find
\begin{align*}
g(fX,Y)&=g(X,f^{\ast}Y),  &  g(fX,f^{\ast}Y)&=g(X,Y)-\widetilde{g}(hX,h^{\ast}Y), \\
\widetilde{g}(hX,V)&=g(X,t^{\ast}V), & \widetilde{g}(h^{\ast}X,V)&=g(X,tV), \\
\widetilde{g}(sU,V)&=g(U,s^{\ast}V),  & \widetilde{g}(sU,s^{\ast}V)&=\widetilde{g}(U,V)-g(tU,t^{\ast}V).
\end{align*}
Thus $f$ (resp. $s$) vanishes identically if and only if $f^{\ast}$ (resp. $s^{\ast}$) so is, and $h$ (resp. $t$) vanishes identically is and only if $t^{\ast}$ (resp. $h^{\ast}$) so is.
Hence we have
\begin{lemma} \label{Takano:lemma:5}
We find for each $p \in M$
\begin{enumerate}
\item[(1)] $F\left(T_{p}M\right)\subset T_{p}M$ if and only if  $F^{\ast}\left(T_{p}M\right)^{\bot}\subset \left(T_{p}M\right)^{\bot}$.

\item[(2)] $F\left(T_{p}M\right)\subset \left(T_{p}M\right)^{\bot}$ if and only if  $F^{\ast}\left(T_{p}M\right)\subset \left(T_{p}M\right)^{\bot}$.

\item[(3)] $F\left(T_{p}M\right)^{\bot}\subset T_{p}M$ if and only if  $F^{\ast}\left(T_{p}M\right)^{\bot}\subset T_{p}M$.

\item[(4)] $F\left(T_{p}M\right)^{\bot}\subset \left(T_{p}M\right)^{\bot}$ if and only if  $F^{\ast}\left(T_{p}M\right)\subset T_{p}M$.
\end{enumerate}
\end{lemma}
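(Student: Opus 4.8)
The plan is to convert each of the four subspace inclusions into the statement that one of the component tensors $f,h,t,s$ (or one of their conjugates $f^{\ast},h^{\ast},t^{\ast},s^{\ast}$) vanishes at $p$, and then to read off the claimed equivalences directly from the pairing identities established just above the lemma.

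First I would set up the dictionary. Since $FX=fX+hX$ with $fX$ tangent and $hX$ normal, the inclusion $F(T_{p}M)\subset T_{p}M$ holds exactly when $hX=0$ for every $X\in T_{p}M$, i.e.\ $h_{p}=0$; likewise $F(T_{p}M)\subset (T_{p}M)^{\bot}$ is equivalent to $f_{p}=0$. From $FV=tV+sV$ with $tV$ tangent and $sV$ normal, $F(T_{p}M)^{\bot}\subset T_{p}M$ is equivalent to $s_{p}=0$ and $F(T_{p}M)^{\bot}\subset (T_{p}M)^{\bot}$ to $t_{p}=0$. The inclusions on the right-hand sides are handled the same way using $F^{\ast}X=f^{\ast}X+h^{\ast}X$ and $F^{\ast}V=t^{\ast}V+s^{\ast}V$: the conditions $F^{\ast}(T_{p}M)^{\bot}\subset (T_{p}M)^{\bot}$, $F^{\ast}(T_{p}M)\subset (T_{p}M)^{\bot}$, $F^{\ast}(T_{p}M)^{\bot}\subset T_{p}M$, $F^{\ast}(T_{p}M)\subset T_{p}M$ are equivalent to $t^{\ast}_{p}=0$, $f^{\ast}_{p}=0$, $s^{\ast}_{p}=0$, $h^{\ast}_{p}=0$ respectively.

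With this dictionary, part (1) becomes ``$h_{p}=0$ iff $t^{\ast}_{p}=0$'', which follows from $\widetilde{g}(hX,V)=g(X,t^{\ast}V)$: since $hX\in (T_{p}M)^{\bot}$ and $t^{\ast}V\in T_{p}M$, nondegeneracy of $\widetilde{g}$ on $(T_{p}M)^{\bot}$ and of $g$ on $T_{p}M$ shows that either tensor is zero iff the bilinear pairing is identically zero iff the other is. In the same manner, part (2) is ``$f_{p}=0$ iff $f^{\ast}_{p}=0$'', obtained from $g(fX,Y)=g(X,f^{\ast}Y)$; part (3) is ``$s_{p}=0$ iff $s^{\ast}_{p}=0$'', obtained from $\widetilde{g}(sU,V)=g(U,s^{\ast}V)$; and part (4) is ``$t_{p}=0$ iff $h^{\ast}_{p}=0$'', obtained from $\widetilde{g}(h^{\ast}X,V)=g(X,tV)$. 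These are precisely the four pointwise equivalences summarised in the sentence immediately preceding the lemma, so no new computation is needed.

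I do not expect a genuine obstacle here; the one point I would double-check is the bookkeeping of which conjugate tensor is paired with which original tensor. For instance $F(T_{p}M)\subset T_{p}M$ is governed by $h$, but its mirror inclusion involves $t^{\ast}$ rather than $h^{\ast}$ or $s^{\ast}$, precisely because the identity linking the normal part of $F$ on tangent vectors to $F^{\ast}$ is $\widetilde{g}(hX,V)=g(X,t^{\ast}V)$. Matching the four inclusions to the four correct pairing identities is the entire content of the proof; once the dictionary above is in place, each of the four equivalences is immediate.
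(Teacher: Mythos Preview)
Your proposal is correct and follows essentially the same approach as the paper: the text immediately preceding the lemma derives the pairing identities $g(fX,Y)=g(X,f^{\ast}Y)$, $\widetilde{g}(hX,V)=g(X,t^{\ast}V)$, $\widetilde{g}(h^{\ast}X,V)=g(X,tV)$, $\widetilde{g}(sU,V)=\widetilde{g}(U,s^{\ast}V)$, concludes from them that $f$ (resp.\ $s$, $h$, $t$) vanishes iff $f^{\ast}$ (resp.\ $s^{\ast}$, $t^{\ast}$, $h^{\ast}$) does, and then states the lemma with ``Hence we have''. Your dictionary translating the subspace inclusions into the vanishing of the appropriate component tensors makes explicit what the paper leaves implicit, but the argument is the same.
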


If $F\left(T_{p}M\right)\subset T_{p}M$ (resp. $F^{\ast}\left(T_{p}M\right)\subset T_{p}M$) for each $p\in M$, then $M$ is said to be $F$-invariant
(resp. $F^{\ast}$-invariant) in $\widetilde{M}$. Then $h$ and $t^{\ast}$ (resp. $h^{\ast}$ and $t$) vanish identically. because of $f^{2}=I$
and $g(fX,f^{\ast}Y)=g(X,Y)$, the triple $(M,g,f)$ is an almost product-like Riemannian manifold. Hence we
have

\begin{theorem}\label{Takano:thm:1}
Let $M$ be a submanifold of an almost product-like Riemannian manifold $\widetilde{M}$. A necessary and sufficient condition for $M$
to be $F$-invariant or $F^{\ast}$-invariant is that the triple $(M,g,f)$ is an almost product-like Riemannian manifold.
\end{theorem}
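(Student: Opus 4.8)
The plan is to establish the two implications separately, the ``only if'' direction being little more than a repackaging of the computation carried out in the paragraph preceding the statement. For necessity, suppose first that $M$ is $F$-invariant; then $h\equiv 0$ by definition, so the identity $f^{2}X=X-thX$ collapses to $f^{2}=I$ on $M$. If instead $M$ is $F^{\ast}$-invariant, then $h^{\ast}\equiv 0$, hence $t\equiv 0$ by the duality recorded just after those formulas ($t$ vanishes identically if and only if $h^{\ast}$ does), and again $f^{2}X=X-thX=X$. In either case $f^{2}=I$, and since the tangential part $f^{\ast}$ of $F^{\ast}$ always satisfies $g(fX,Y)=\widetilde g(FX,Y)=\widetilde g(X,F^{\ast}Y)=g(X,f^{\ast}Y)$ for all $X,Y\in\Gamma(TM)$ by \eqref{eq6}, the tensor $f$ on $(M,g)$ admits a compatible partner, so that $(M,g,f)$ is an almost product-like Riemannian manifold.

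For sufficiency, assume $(M,g,f)$ is almost product-like, so $f^{2}=I$ on $M$. Reading this off $f^{2}X=X-thX$ gives $t\circ h=0$ on $\Gamma(TM)$. Substituting the normal field $V=hY$ into the relation $\widetilde g(h^{\ast}X,V)=g(X,tV)$ then yields $\widetilde g(h^{\ast}X,hY)=g(X,t(hY))=0$ for all $X,Y\in\Gamma(TM)$; equivalently, at each point $p$ the subspaces $\mathrm{Im}\,h_{p}$ and $\mathrm{Im}\,h^{\ast}_{p}$ of $(T_{p}M)^{\bot}$ are $\widetilde g$-orthogonal, the map $t$ annihilates $\mathrm{Im}\,h$, and (applying the same argument to $(M,g,f^{\ast})$, which is almost product-like by Lemma~\ref{Takano:lemma:1}) $t^{\ast}$ annihilates $\mathrm{Im}\,h^{\ast}$. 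Unwinding the definitions and the equivalences of Lemma~\ref{Takano:lemma:5}, it then remains to conclude the global alternative: $h\equiv 0$ on $M$, or $h^{\ast}\equiv 0$ on $M$.

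This last step is where I expect the real difficulty to lie. When $M$ is a hypersurface it is immediate: $(T_{p}M)^{\bot}$ is one-dimensional and $\widetilde g$ is positive definite, so it cannot contain two non-trivial mutually orthogonal subspaces, which forces $h_{p}=0$ or $h^{\ast}_{p}=0$ at each point; since on the open set $\{\,h\neq0\,\}$ the identity $t\circ h=0$ forces $t\equiv0$ and hence $h^{\ast}\equiv0$ there, a connectedness argument on $M$ promotes the pointwise dichotomy to the stated global one. In higher codimension the dimension count no longer closes the argument, and the plan would be to exploit the $s$-invariance of $\mathrm{Im}\,h$ (from $hf+sh=0$) and of $\mathrm{Im}\,h^{\ast}$, together with the metric identities relating $s,s^{\ast},t,t^{\ast}$ and the standing convention $F\neq\pm I$, to exclude a nontrivial $\widetilde g$-orthogonal splitting of the normal bundle into an $h$-part and an $h^{\ast}$-part. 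Carrying out this exclusion, rather than the routine tensorial manipulations used above, is the main obstacle.
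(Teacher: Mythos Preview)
Your necessity argument is exactly the paper's: the paragraph immediately preceding the theorem is the paper's entire justification, and it shows only that $F$-invariance (resp.\ $F^{\ast}$-invariance) forces $h=0$ (resp.\ $h^{\ast}=0$), whence $f^{2}=I$ and $g(fX,f^{\ast}Y)=g(X,Y)$, so $(M,g,f)$ is almost product-like. The paper offers no argument for the converse implication.

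Your hesitation about sufficiency is well founded; in fact the converse fails in codimension at least two, so the ``exclusion'' you propose cannot be carried out. Take $\widetilde M=\mathbb{R}^{4}$ with the Euclidean metric, $M=\operatorname{span}(e_{1},e_{2})$, and
\[
F(e_{1})=e_{2}+e_{3},\quad F(e_{2})=e_{1}+e_{3},\quad F(e_{3})=-e_{3},\quad F(e_{4})=p(e_{1}+e_{2}+e_{3})-e_{4},
\]
with $p\neq 0$. One checks directly that $F^{2}=I$ and $F\neq\pm I$; with the standard metric $F^{\ast}=F^{T}$ and $(F^{\ast})^{2}=I$ automatically. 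Here $f$ swaps $e_{1}$ and $e_{2}$, so $f^{2}=I$, $f\neq\pm I$, and $f^{\ast}=f$, hence $(M,g,f)$ is almost product-like. Yet $h(e_{1})=e_{3}\neq 0$ and $h^{\ast}(e_{1})=p\,e_{4}\neq 0$, so $M$ is neither $F$-invariant nor $F^{\ast}$-invariant. Thus the obstacle you isolate is not a missing lemma but an actual failure of the stated equivalence; your reduction to the orthogonality $\operatorname{Im}h\perp_{\widetilde g}\operatorname{Im}h^{\ast}$ is correct and is precisely where the argument must stop. Even in codimension one your connectedness step needs more care: the pointwise alternative $h_{p}=0$ or $h^{\ast}_{p}=0$ does not by itself give a global alternative on $M$.
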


\begin{example}\label{exx3}
Let $(\mathbb{R}^{4},\widetilde{g},F)$ be an almost product-like Riemannian manifold of Example \ref{ex1}.
We define the immersion $\widetilde{f}: M\rightarrow \mathbb{R}^{4}$ by $\widetilde{f}(x_{1},x_{3})=(x_{1},x_{2},x_{3},x_{4})$.
Then the induced metric $g$ is given by
\begin{eqnarray*}
g=\left(
    \begin{array}{cc}
      1+e^{-x_{1}+x_{3}} & 0 \\
      0 & e^{-x_{1}+x_{3}} \\
    \end{array}
  \right).
\end{eqnarray*}
We find
\begin{eqnarray*}
f=\left(
    \begin{array}{cccc}
      0 & 0 & 1 & 0 \\
      0 & 0 & 0 & 0 \\
      1 & 0 & 0 & 0 \\
      0 & 0 & 0 & 0 \\
    \end{array}
  \right), \ \ h=O,  \ \ \ t=O, \ \ \
  s= \left(
    \begin{array}{cccc}
      0 & 0 & 0 & 0 \\
      0 & 0 & 0 & 1 \\
      0 & 0 & 0 & 0 \\
      0 & 1 & 0 & 0 \\
    \end{array}
  \right)
\end{eqnarray*}
and

\begin{eqnarray*}
f^{\ast}=\left(
    \begin{array}{cccc}
      0 & 0 & \left(1+e^{x_{1}-x_{3}}\right)^{-1} & 0 \\
      0 & 0 & 0 & 0 \\
      1+e^{x_{1}-x_{3}} & 0 & 0 & 0 \\
      0 & 0 & 0 & 0 \\
    \end{array}
  \right), \ \ h^{\ast}=O,  \
\end{eqnarray*}
\begin{eqnarray*}
t^{\ast}=O, \ s^{\ast}= \left(
    \begin{array}{cccc}
      0 & 0 & 0 & 0 \\
      0 & 0 & 0 & 1 \\
      0 & 0 & 0 & 0 \\
      0 & 1 & 0 & 0 \\
    \end{array}
  \right).
\end{eqnarray*}
Therefore the triple $(M,g,f)$ is $F$-invariant and $F^{\ast}$-invariant of $\mathbb{R}^{4}$.
\end{example}

Moreover, we get
\begin{align*}
g((\nabla_{Z}f)X,Y)&=g(X,(\nabla^{\ast}_{Z}f^{\ast})Y), \\
\widetilde{g}((D_{X}s)U,V)&=\widetilde{g}(U,(D^{\ast}_{X}s^{\ast})V).
\end{align*}
Thus we have
\begin{lemma} \label{Takano:lemma:6}
The structure $f$ (resp. $s$) is parallel with respect to $\nabla$ (resp. $D$) if and only if $f^{\ast}$ (resp. $s^{\ast}$) is parallel with respect to $\nabla^{\ast}$ (resp. $D^{\ast}$).
\end{lemma}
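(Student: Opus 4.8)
The plan is to read off both equivalences directly from the two identities displayed immediately above the statement, using nothing more than the nondegeneracy of the metrics involved.

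For the first assertion, start from the identity $g((\nabla_{Z}f)X,Y)=g(X,(\nabla^{\ast}_{Z}f^{\ast})Y)$, valid for all $X,Y,Z\in\Gamma(TM)$. If $f$ is parallel with respect to $\nabla$, then $(\nabla_{Z}f)X=0$, so the left-hand side vanishes identically; hence $g(X,(\nabla^{\ast}_{Z}f^{\ast})Y)=0$ for every $X\in\Gamma(TM)$, and since $g$ is a Riemannian (in particular nondegenerate) metric on $M$, this forces $(\nabla^{\ast}_{Z}f^{\ast})Y=0$ for all $Y,Z$, i.e.\ $f^{\ast}$ is parallel with respect to $\nabla^{\ast}$. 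The converse follows by the same argument after interchanging the roles of $(\nabla,f)$ and $(\nabla^{\ast},f^{\ast})$; this is legitimate because $(f^{\ast})^{\ast}=f$ and $(\nabla^{\ast})^{\ast}=\nabla$, so the displayed identity is symmetric under that interchange.

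For the second assertion one argues verbatim with the companion identity $\widetilde{g}((D_{X}s)U,V)=\widetilde{g}(U,(D^{\ast}_{X}s^{\ast})V)$, where now $X\in\Gamma(TM)$ and $U,V$ are normal to $M$. Here the role of nondegeneracy is played by the fact that the restriction of $\widetilde{g}$ to the normal bundle $(T_{p}M)^{\bot}$ is nondegenerate (again because $\widetilde{g}$ is Riemannian): if $D_{X}s=0$ for all $X$, then $\widetilde{g}(U,(D^{\ast}_{X}s^{\ast})V)=0$ for all normal $U$, whence $D^{\ast}_{X}s^{\ast}=0$, and symmetrically. This gives $Ds=0\iff D^{\ast}s^{\ast}=0$.

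I do not expect a genuine obstacle: once the two displayed identities are in hand the lemma is purely a matter of quantifier manipulation plus nondegeneracy. The only place where the earlier development is actually invoked is the derivation of those two identities, which comes from differentiating the pointwise relations $g(fX,Y)=g(X,f^{\ast}Y)$ and $\widetilde{g}(sU,V)=\widetilde{g}(U,s^{\ast}V)$, expanding $(\nabla_{Z}f)X=\nabla_{Z}(fX)-f(\nabla_{Z}X)$ and $(D_{X}s)U=D_{X}(sU)-s(D_{X}U)$, and cancelling the matching terms by means of the duality relations $Zg(X,Y)=g(\nabla_{Z}X,Y)+g(X,\nabla^{\ast}_{Z}Y)$ and $X\widetilde{g}(U,V)=\widetilde{g}(D_{X}U,V)+\widetilde{g}(U,D^{\ast}_{X}V)$; but since those identities are already recorded above, I would simply cite them.
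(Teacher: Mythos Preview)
Your proposal is correct and is exactly the paper's approach: the paper records the two identities $g((\nabla_{Z}f)X,Y)=g(X,(\nabla^{\ast}_{Z}f^{\ast})Y)$ and $\widetilde{g}((D_{X}s)U,V)=\widetilde{g}(U,(D^{\ast}_{X}s^{\ast})V)$ and then simply writes ``Thus we have'' before stating the lemma. Your explicit appeal to nondegeneracy just spells out what the paper leaves implicit.
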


\begin{lemma} \label{Takano:lemma:7}
 In the locally product-like statistical manifold, we get
 \begin{align*}
 (\nabla_{X}f)Y-A_{hY}X-t(\sigma(X,Y))&=0, \\
 (\overline{D}_{X}h)Y+\sigma(X,fY)-s(\sigma(X,Y))&=0, \\
 (\overline{\nabla}_{X}t)V-A_{sV}X+f(A_{V}X)&=0, \\
 (D_{X}s)V+\sigma(X,tV)+h(A_{V}X)&=0,
 \end{align*}
 where we put
 \begin{align*}
  (\nabla_{X}f)Y&=\nabla_{X}(fY)-f(\nabla_{X}Y),    & (\overline{D}_{X}h)Y&=D_{X}(hY)-h(\nabla_{X}Y),\\
  (\overline{\nabla}_{X}t)V&=\nabla_{X}(tV)-t(D_{X}V),    &  (D_{X}s)V&=D_{X}(sV)-s(D_{X}V).
 \end{align*}
\end{lemma}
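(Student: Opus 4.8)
The plan is to use the single hypothesis that $F$ is $\widetilde{\nabla}$-parallel, which is equivalent to the operator identity
\begin{equation*}
\widetilde{\nabla}_{X}(FZ)=F(\widetilde{\nabla}_{X}Z)
\end{equation*}
for every $X$ tangent to $M$ and every vector field $Z$ along $M$, whether tangent or normal. All four asserted identities will then fall out by substituting $Z=Y$ tangent for the first two and $Z=V$ normal for the last two, expanding both sides with the Gauss and Weingarten formulas together with the decompositions $FX=fX+hX$, $FV=tV+sV$, and finally projecting onto $TM$ and $(TM)^{\perp}$.

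First I would take $Z=Y\in\Gamma(TM)$. On the left I write $FY=fY+hY$ and apply the Gauss formula to the tangential summand $fY$ and the Weingarten formula to the normal summand $hY$, getting
\begin{equation*}
\widetilde{\nabla}_{X}(FY)=\nabla_{X}(fY)+\sigma(X,fY)-A_{hY}X+D_{X}(hY).
\end{equation*}
On the right I write $\widetilde{\nabla}_{X}Y=\nabla_{X}Y+\sigma(X,Y)$ and decompose $F(\nabla_{X}Y)=f(\nabla_{X}Y)+h(\nabla_{X}Y)$ and $F(\sigma(X,Y))=t(\sigma(X,Y))+s(\sigma(X,Y))$. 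Equating tangential components and using $(\nabla_{X}f)Y=\nabla_{X}(fY)-f(\nabla_{X}Y)$ gives the first identity; equating normal components and using $(\overline{D}_{X}h)Y=D_{X}(hY)-h(\nabla_{X}Y)$ gives the second.

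The last two identities are obtained in the same way with $Z=V\in\Gamma(TM)^{\perp}$: the left-hand side becomes $\nabla_{X}(tV)+\sigma(X,tV)-A_{sV}X+D_{X}(sV)$ (Gauss on $tV$, Weingarten on $sV$), while the right-hand side is $F(-A_{V}X+D_{X}V)=-f(A_{V}X)-h(A_{V}X)+t(D_{X}V)+s(D_{X}V)$; projecting onto $TM$ and $(TM)^{\perp}$ and invoking the definitions of $\overline{\nabla}t$ and $Ds$ yields the third and fourth identities respectively. There is no real obstacle here: the only point needing care is the bookkeeping of which terms are tangential versus normal and the consistent use of the four decompositions alongside the Gauss--Weingarten formulas for both $\sigma$ and $D$. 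The statistical (rather than merely Riemannian) setting plays no role beyond ensuring that $\widetilde{\nabla}$ is a genuine affine connection for which $\widetilde{\nabla}F=0$ is meaningful.
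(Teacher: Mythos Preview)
Your proof is correct and follows essentially the same approach as the paper: the paper computes $(\widetilde{\nabla}_{X}F)Y$ and $(\widetilde{\nabla}_{X}F)V$ via the Gauss--Weingarten formulas and the decompositions of $F$, then sets them to zero and separates tangential and normal parts, which is exactly what you do (just phrased as $\widetilde{\nabla}_{X}(FZ)=F(\widetilde{\nabla}_{X}Z)$ rather than $(\widetilde{\nabla}_{X}F)Z=0$).
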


\begin{proof}
We find
\begin{eqnarray*}
\left(\widetilde{\nabla}_{X}F\right)Y&=&\left(\nabla _{X}f\right)Y-A_{hY}X-t\left(\sigma(X,Y)\right)+
\left(\overline{D}_{X}h\right)Y \nonumber \\
&&+\sigma(X,fY)-s(\sigma(X,Y)), \nonumber \\
\left(\widetilde{\nabla}_{X}F\right)V&=&\left(\overline{\nabla} _{X}t\right)V-A_{sV}X-f\left(A_{V}X\right)+
\left(D_{X}s\right)V \nonumber \\
&&+\sigma(X,tV)+h(A_{V}X).
\end{eqnarray*}
From $\widetilde{\nabla}F=0$, we have the results.
\end{proof}

\begin{corollary} \label{Takano:cor:2}
In the locally product-like statistical manifold, we get
\begin{align*}
\left(\nabla^{\ast}_{X}f^{\ast}\right)Y-A^{\ast}_{h^{\ast}Y}X-t^{\ast}\left(\sigma^{\ast}(X,Y)\right)&=0, \nonumber \\
\left(\overline{D}^{\ast}_{X}h^{\ast}\right)Y+\sigma^{\ast}(X,f^{\ast}Y)-s^{\ast}\left(\sigma^{\ast}(X,Y)\right)&=0, \nonumber \\
\left(\overline{\nabla}^{\ast}_{X}t^{\ast}\right)V-A^{\ast}_{s^{\ast}V}X+f^{\ast}(A^{\ast}_{V}X)&=0,\nonumber \\
\left(D^{\ast}_{X}s^{\ast}\right)V+\sigma^{\ast}(X,t^{\ast}V)+h^{\ast}(A^{\ast}_{V}X)&=0,
\end{align*}
where we put
\begin{align*}
\left(\nabla^{\ast}_{X}f^{\ast}\right)Y&=\nabla^{\ast}_{X}(f^{\ast}Y)-f^{\ast}(\nabla^{\ast}_{X}Y), &
\left(\overline{D}^{\ast}_{X}h^{\ast}\right)Y&= D^{\ast}_{X}(h^{\ast}Y)-h^{\ast}(\nabla^{\ast}_{X}Y), \\
\left(\overline{\nabla}^{\ast}_{X}t^{\ast}\right)V&=\nabla^{\ast}_{X}(t^{\ast}V)-t^{\ast}(D^{\ast}_{X}V), &
\left(D^{\ast}_{X}s^{\ast}\right)V&=D^{\ast}_{X}(s^{\ast}V)-s^{\ast}(D^{\ast}_{X}V).
\end{align*}
\end{corollary}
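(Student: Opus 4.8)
The plan is to deduce Corollary \ref{Takano:cor:2} from Lemma \ref{Takano:lemma:7} by a duality argument, exactly as Lemma \ref{Takano:lemma:7} was itself deduced from the splitting of $\widetilde{\nabla}F$. The point is that everything in sight comes in conjugate pairs, so one should not redo the computation but rather apply the lemma to the conjugate statistical structure.

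First I would invoke Lemma \ref{Takano:lemma:2}: since $(\widetilde{M},\widetilde{g},\widetilde{\nabla},F)$ is a locally product-like statistical manifold, so is $(\widetilde{M},\widetilde{g},\widetilde{\nabla}^{\ast},F^{\ast})$, i.e.\ $\widetilde{\nabla}^{\ast}F^{\ast}=0$. Next I would spell out the Gauss--Weingarten apparatus attached to this conjugate structure on the submanifold $M$: by the formulas displayed before Lemma \ref{Takano:lemma:4}, the connection induced by $\widetilde{\nabla}^{\ast}$ is $\nabla^{\ast}$, its second fundamental form is $\sigma^{\ast}$, its shape operator is $A^{\ast}$, and its normal connection is $D^{\ast}$; and the tangential/normal decomposition of $F^{\ast}$ reads $F^{\ast}X=f^{\ast}X+h^{\ast}X$, $F^{\ast}V=t^{\ast}V+s^{\ast}V$, by the very definitions of $f^{\ast},h^{\ast},t^{\ast},s^{\ast}$ given above. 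Thus the hypotheses of Lemma \ref{Takano:lemma:7} are met with $(\widetilde{\nabla},F,\nabla,\sigma,A,D,f,h,t,s)$ replaced throughout by $(\widetilde{\nabla}^{\ast},F^{\ast},\nabla^{\ast},\sigma^{\ast},A^{\ast},D^{\ast},f^{\ast},h^{\ast},t^{\ast},s^{\ast})$.

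Applying Lemma \ref{Takano:lemma:7} verbatim to this conjugate data then yields the four identities
$(\nabla^{\ast}_{X}f^{\ast})Y-A^{\ast}_{h^{\ast}Y}X-t^{\ast}(\sigma^{\ast}(X,Y))=0$,
$(\overline{D}^{\ast}_{X}h^{\ast})Y+\sigma^{\ast}(X,f^{\ast}Y)-s^{\ast}(\sigma^{\ast}(X,Y))=0$,
$(\overline{\nabla}^{\ast}_{X}t^{\ast})V-A^{\ast}_{s^{\ast}V}X+f^{\ast}(A^{\ast}_{V}X)=0$,
$(D^{\ast}_{X}s^{\ast})V+\sigma^{\ast}(X,t^{\ast}V)+h^{\ast}(A^{\ast}_{V}X)=0$,
with $(\overline{D}^{\ast}_{X}h^{\ast})Y$, $(\overline{\nabla}^{\ast}_{X}t^{\ast})V$, $(D^{\ast}_{X}s^{\ast})V$ defined as in the statement; these are precisely the asserted relations. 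As an alternative to quoting the lemma for the conjugate structure, one could verify each identity directly by pairing it against an arbitrary tangent field $Z$ (for the first two) or normal field $V$ (for the last two) and using the transpose relations $g((\nabla_{Z}f)X,Y)=g(X,(\nabla^{\ast}_{Z}f^{\ast})Y)$ and $\widetilde{g}((D_{X}s)U,V)=\widetilde{g}(U,(D^{\ast}_{X}s^{\ast})V)$ established just before Lemma \ref{Takano:lemma:6}, together with Lemma \ref{Takano:lemma:4} and the pairing identities $\widetilde{g}(hX,V)=g(X,t^{\ast}V)$, $\widetilde{g}(h^{\ast}X,V)=g(X,tV)$, $\widetilde{g}(sU,V)=g(U,s^{\ast}V)$.

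The only real point of care — and the mild "obstacle" — is bookkeeping: in the direct approach one must check that the mixed terms transpose correctly, e.g.\ that $A_{hY}X$ dualizes to $A^{\ast}_{h^{\ast}Y}X$ and $f(A_V X)$ to $f^{\ast}(A^{\ast}_V X)$, which rests on combining $(A^{\ast})^{\ast}=A$, $(h^{\ast})^{\ast}=h$, $(f^{\ast})^{\ast}=f$ with Corollary \ref{Takano:cor:1} and the pairings above. Going through the conjugate structure sidesteps this entirely, since the substitution $(\;\cdot\;)\mapsto(\;\cdot\;)^{\ast}$ is then applied uniformly, and the result is immediate.

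\begin{proof}
By Lemma \ref{Takano:lemma:2}, $(\widetilde{M},\widetilde{g},\widetilde{\nabla}^{\ast},F^{\ast})$ is also a locally product-like statistical manifold. For this conjugate structure the induced connection on $M$ is $\nabla^{\ast}$, the second fundamental form is $\sigma^{\ast}$, the shape operator is $A^{\ast}$, the normal connection is $D^{\ast}$, and the tangential and normal parts of $F^{\ast}$ are $f^{\ast},h^{\ast}$ and $t^{\ast},s^{\ast}$. Applying Lemma \ref{Takano:lemma:7} to $(\widetilde{M},\widetilde{g},\widetilde{\nabla}^{\ast},F^{\ast})$ therefore yields precisely the four stated identities.
\end{proof}
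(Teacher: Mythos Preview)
Your proof is correct and matches the paper's implicit argument: the corollary is stated immediately after Lemma \ref{Takano:lemma:7} with no separate proof, and is understood as the same computation carried out for the conjugate structure $(\widetilde{M},\widetilde{g},\widetilde{\nabla}^{\ast},F^{\ast})$, which is legitimate by Lemma \ref{Takano:lemma:2}. Your invocation of that lemma together with the identification of the induced data $(\nabla^{\ast},\sigma^{\ast},A^{\ast},D^{\ast},f^{\ast},h^{\ast},t^{\ast},s^{\ast})$ is exactly what the paper intends.
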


\begin{example} \label{exx4}
$(\mathbb{R}^{4},\widetilde{g},F,\widetilde{\nabla})$ be a locally product-like statistical manifold of Example \ref{exx2}
and $(M,g)$ be a statistical submanifold of Example \ref{exx3}. If $\partial_{1}$, $\partial_{3}$ is tangent to $M$ and $\partial_{2}$, $\partial_{4}$ is normal to $M$, then we find
\begin{align*}
\nabla_{\partial_{i}}\partial_{j}&=e^{-x_{1}+x_{3}}(\partial_{1}+\partial_{3}), \ \ \ \sigma(\partial_{i},\partial_{j})=0, \ \  (i,j\in \{1,3\}), \\
A_{\partial_{2}}\partial_{1}&=A_{\partial_{4}}\partial_{3}=0, \ \ \ \
D_{\partial_{1}}\partial_{2}=D_{\partial_{3}}\partial_{4}=-e^{-x_{1}+x_{3}}\partial_{2}-(1+e^{-x_{1}+x_{3}})\partial_{4}, \\
A_{\partial_{2}}\partial_{3}&=A_{\partial_{4}}\partial_{1}=0,\ \ \ \
D_{\partial_{1}}\partial_{4}=D_{\partial_{3}}\partial_{2}=-(1+e^{-x_{1}+x_{3}})\partial_{2}-e^{-x_{1}+x_{3}}\partial_{4},
\end{align*}
and
\begin{align*}
\nabla^{\ast}_{\partial_{1}}\partial_{1}&=-\tfrac{e^{-x_{1}+x_{3}}(2+e^{-x_{1}+x_{3}})}{1+e^{-x_{1}+x_{3}}}\partial_{1}-(1+e^{-x_{1}+x_{3}})\partial_{3}, \
 \sigma^{\ast}(\partial_{1},\partial_{1})=0,  \\
\nabla^{\ast}_{\partial_{1}}\partial_{3}&=\nabla^{\ast}_{\partial_{3}}\partial_{1}=-\tfrac{e^{-2(x_{1}-x_{3})}}{1+e^{-x_{1}+x_{3}}}\partial_{1}
-(1+e^{-x_{1}+x_{3}})\partial_{3},  \  \sigma^{\ast}(\partial_{1},\partial_{3})=0, \\
 \nabla^{\ast}_{\partial_{3}}\partial_{3}&=-\tfrac{e^{-2(x_{1}-x_{3})}}{1+e^{-x_{1}+x_{3}}}\partial_{1}+(1-e^{-x_{1}+x_{3}})\partial_{3}, \ \ \ \ \ \ \ \ \ \ \ \ \
\sigma^{\ast}(\partial_{3},\partial_{3})=0, \\
A^{\ast}_{\partial_{2}}\partial_{1}&=A^{\ast}_{\partial_{4}}\partial_{1}=0, \ \  \ \ D^{\ast}_{\partial_{1}}\partial_{2}=D^{\ast}_{\partial_{1}}\partial_{4}=(1+e^{-x_{1}+x_{3}})(\partial_{2}+\partial_{4}), &
 & \\
A^{\ast}_{\partial_{2}}\partial_{3}&=A^{\ast}_{\partial_{4}}\partial_{3}=0, \ \ \ \ D^{\ast}_{\partial_{3}}\partial_{2}=D^{\ast}_{\partial_{3}}\partial_{4}=e^{-x_{1}+x_{3}}(\partial_{2}+\partial_{4}). & &
\end{align*}
Thus the triple $(M,g,\nabla)$ (resp. $(M,g,\nabla^{\ast})$) is totally geodesic relative to $\widetilde{\nabla}$ (resp. $\widetilde{\nabla}^{\ast}$).
Also, the structure $f$ (resp. $f^{\ast}$) is parallel with respect to $\nabla$ (resp. $\nabla^{\ast}$). Therefore $(M,g,f,\nabla)$ is a locally
product-like statistical manifold and so is $(M,g,f^{\ast},\nabla^{\ast})$.
\end{example}
\begin{example}\label{exx5}
Let $(\mathbb{R}^{4},\widetilde{g},F,\widetilde{\nabla})$ be a locally product-like statistical manifold of Example \ref{exx2}.
We define the immersion $\widetilde{f}:M\rightarrow \mathbb{R}^{4}$ by $\widetilde{f}(x_{1},x_{2},x_{3})=(x_{1},x_{2},x_{3},x_{4})$.
Then the induced metric $g$ is given by
\begin{equation*}
g=\begin{pmatrix}
1+e^{-x_{1}+x_{3}} & 0 & 0 \\
0 & e^{x_{1}-x_{3}} & 0 \\
0 & 0 & e^{-x_{1}+x_{3}} \\
\end{pmatrix}.
\end{equation*}
If $\partial_{1}, \partial_{2}, \partial_{3}$ is tangent to $M$ and $\partial_{4}$ is normal to $M$, then we find
\begin{align*}
  {\nabla}_{\partial_{i}} {\partial_{j}}&=e^{-x_{1}+x_{3}}(\partial_{1}+\partial_{3}),   &  \sigma(\partial_{i},\partial_{j})&=0, \ \ \  i,j \in \left\{ 1,3 \right\},\\
{\nabla}_{\partial_{1}} {\partial_{2}}&={\nabla}_{\partial_{2}} {\partial_{1}}=-e^{-x_{1}+x_{3}} \partial_{2},& \sigma(\partial_{1},\partial_{2})&=-(1+e^{-x_{1}+x_{3}}) \partial_{4}, \\
{\nabla}_{\partial_{2}} {\partial_{2}}&=-e^{x_{1}-x_{3}}\partial_{1}+\partial_{2}-e^{x_{1}-x_{3}}\partial_{3},   & \sigma(\partial_{2},\partial_{2})&=-\partial_{4}, \\
 {\nabla}_{\partial_{2}} {\partial_{3}}&={\nabla}_{\partial_{3}} {\partial_{2}}=-(1+e^{-x_{1}+x_{3}})\partial_{2},  & \sigma(\partial_{2},\partial_{3})&=-e^{-x_{1}+x_{3}} \partial_{4}, \\
{A_{\partial_{4}}}\partial_{1}&=(1+e^{-x_{1}+x_{3}})\partial_{2},   & {D_{\partial_{1}}}\partial_{4}&=e^{-x_{1}+x_{3}} \partial_{4}, \\
{A_{\partial_{4}}}\partial_{2}&=e^{x_{1}-x_{3}}\partial_{1}+\partial_{2}+e^{x_{1}-x_{3}}\partial_{3},  &  {D_{\partial_{2}}}\partial_{4}&=\partial_{4}, \\
{A_{\partial_{4}}}\partial_{3}&=e^{-x_{1}+x_{3}}\partial_{2},  &  {D_{\partial_{3}}}\partial_{4}&=-(1+e^{-x_{1}+x_{3}})\partial_{4}
\end{align*}
and

\begin{align*}
{\nabla}^{\ast}_{\partial_{1}}{\partial_{1}}&=-\dfrac{e^{-x_{1}+x_{3}}(2+e^{-x_{1}+x_{3}})}{1+e^{-x_{1}+x_{3}}}{\partial_{1}}-(1+e^{-x_{1}+x_{3}}){\partial_{3}}, \quad \sigma^{\ast}(\partial_{1},\partial_{1})=0, \\
{\nabla}^{\ast}_{\partial_{1}}{\partial_{2}}&={\nabla}^{\ast}_{\partial_{2}}{\partial_{1}}=(1+e^{-x_{1}+x_{3}})\partial_{2}, \quad \sigma^{\ast}(\partial_{1},\partial_{2})=(1+e^{-x_{1}+x_{3}})\partial_{4}, \\
{\nabla}^{\ast}_{\partial_{1}}{\partial_{3}}&={\nabla}^{\ast}_{\partial_{3}}{\partial_{1}}=-\frac{e^{-2(x_{1}-x_{3})}}{1+e^{-x_{1}+x_{3}}}\partial_{1}-(1+e^{-x_{1}+x_{3}})\partial_{3}, \sigma^{\ast}(\partial_{1},\partial_{3})=0,\\
{\nabla}^{\ast}_{\partial_{2}}{\partial_{2}}&=\frac{1}{1+e^{-x_{1}+x_{3}}}\partial_{1}-\partial_{2}+\frac{1+e^{-x_{1}+x_{3}}}{e^{-2(x_{1}-x_{3})}}\partial_{3}, \quad \sigma^{\ast}(\partial_{2},\partial_{2})=\partial_{4},
\end{align*}

\begin{align*}
{\nabla}^{\ast}_{\partial_{2}}\partial_{3}&= \nabla^{\ast}_{\partial_{3}}\partial_{2}=e^{-x_{1}+x_{3}}\partial_{2}, \quad \sigma^{\ast}(\partial_{2},\partial_{3})=e^{-x_{1}+x_{3}}\partial_{4}, \\
{\nabla}^{\ast}_{\partial_{3}}{\partial_{3}}&=-\frac{e^{-2(x_{1}-x_{3})}}{1+e^{-x_{1}+x_{3}}}\partial_{1}+(1-e^{-x_{1}+x_{3}}){\partial_{3}}, \quad \sigma^{\ast}(\partial_{3},\partial_{3})=0, \\
A^{\ast}_{\partial_{4}}\partial_{1}&=-(1+e^{-x_{1}+x_{3}})\partial_{2}, \quad D^{\ast}_{\partial_{1}} \partial_{4}=(1+e^{-x_{1}+x_{3}})\partial_{4}, \\
A^{\ast}_{\partial_{4}}\partial_{2}&=-e^{x_{1}-x_{3}}\partial_{1}-\partial_{2}-e^{x_{1}-x_{3}}\partial_{3}, \quad D^{\ast}_{\partial_{2}} \partial_{4}=-\partial_{4}, \\
A^{\ast}_{\partial_{4}}\partial_{3}&=-e^{-x_{1}+x_{3}}\partial_{2},  \quad D^{\ast}_{\partial_{3}}\partial_{4}=e^{-x_{1}+x_{3}}\partial_{4}.
\end{align*}
\normalsize
Thus the triple $(M,g,\nabla)$ is a statistical hypersurface of $\mathbb{R}^{4}$ and so is $(M,g,\nabla^{\ast})$.
\end{example}

For any vector fields $X,Y,Z$ tangent to $M$, we get
\begin{equation*}
 \widetilde{R}(X,Y)Z=R(X,Y)Z-A_{\sigma(Y,Z)}X+A_{\sigma(X,Z)}Y+(D_{X}\sigma)(Y,Z)-(D_{Y}\sigma)(X,Z),
\end{equation*}
\begin{align*}
 \widetilde{R}^{\ast}(X,Y)Z&=R^{\ast}(X,Y)Z-A^{\ast}_{\sigma^{\ast}(Y,Z)}X+A^{\ast}_{\sigma^{\ast}(X,Z)}Y+(D^{\ast}_{X}\sigma^{\ast})(Y,Z)\\
 &\quad -(D^{\ast}_{Y}\sigma^{\ast})(X,Z),
\end{align*}
where we put
\begin{equation*}
(D_{X}\sigma)(Y,Z)=D_{X}\{\sigma(Y,Z)\}-\sigma(\nabla_{X}Y,Z)-\sigma(Y,\nabla_{X}Z),
\end{equation*}
\begin{equation*}
(D^{\ast}_{X}\sigma^{\ast})(Y,Z)=D^{\ast}_{X}\{\sigma^{\ast}(Y,Z)\}-\sigma^{\ast}(\nabla^{\ast}_{X}Y,Z)-\sigma^{\ast}(Y,\nabla^{\ast}_{X}Z).
\end{equation*}
Hence we have the equation of Gauss relative to $\widetilde{\nabla}$ and the equation of Codazzi relative to $\widetilde{\nabla}$ \cite{Furuhata-2016}.

\begin{proposition}\label{Takano:prop:1a}
 For any vector field $W$ tangent to $M$, we get
 \begin{eqnarray*}
  \widetilde{g}(\widetilde{R}(X,Y)Z,W)&=&g(R(X,Y)Z,W)-\widetilde{g}(\sigma(Y,Z),\sigma^{\ast}(X,W))\\
   &&+\widetilde{g}(\sigma(X,Z),\sigma^{\ast}(Y,W)), \\
   (\widetilde{R}(X,Y)Z)^{\perp}&=&(D_{X}\sigma)(Y,Z)-(D_{Y}\sigma)(X,Z).
 \end{eqnarray*}
\end{proposition}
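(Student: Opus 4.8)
The plan is to start from the equation of Gauss relative to $\widetilde{\nabla}$ recorded immediately before the statement, namely
\[
\widetilde{R}(X,Y)Z = R(X,Y)Z - A_{\sigma(Y,Z)}X + A_{\sigma(X,Z)}Y + (D_{X}\sigma)(Y,Z) - (D_{Y}\sigma)(X,Z),
\]
and to split both sides into components tangent and normal to $M$. Since $R$ is the curvature tensor of the induced connection $\nabla$, the term $R(X,Y)Z$ is tangent to $M$; the two shape-operator terms $A_{\sigma(Y,Z)}X$ and $A_{\sigma(X,Z)}Y$ are tangent to $M$ by the Weingarten formula; and $(D_{X}\sigma)(Y,Z)-(D_{Y}\sigma)(X,Z)$ is normal to $M$ by construction. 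Reading off the normal part of the Gauss equation then yields the second displayed identity $(\widetilde{R}(X,Y)Z)^{\perp}=(D_{X}\sigma)(Y,Z)-(D_{Y}\sigma)(X,Z)$ at once.

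For the first identity I would pair the Gauss equation with a tangent vector field $W$ by means of $\widetilde{g}$. The two normal terms drop out against the tangent $W$, and $\widetilde{g}(R(X,Y)Z,W)=g(R(X,Y)Z,W)$ because both arguments are tangent to $M$. It then remains only to rewrite $\widetilde{g}(A_{\sigma(Y,Z)}X,W)$ and $\widetilde{g}(A_{\sigma(X,Z)}Y,W)$. Here I would apply Lemma \ref{Takano:lemma:4} in its second form, $\widetilde{g}(\sigma^{\ast}(X,Y),V)=g(Y,A_{V}X)$: taking $V=\sigma(Y,Z)$ and relabelling the tangent arguments appropriately gives $\widetilde{g}(A_{\sigma(Y,Z)}X,W)=\widetilde{g}(\sigma(Y,Z),\sigma^{\ast}(X,W))$, and likewise $\widetilde{g}(A_{\sigma(X,Z)}Y,W)=\widetilde{g}(\sigma(X,Z),\sigma^{\ast}(Y,W))$. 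Substituting these back produces exactly the asserted formula.

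The only genuinely delicate point is the bookkeeping in that last step: because $\widetilde{\nabla}$ and $\widetilde{\nabla}^{\ast}$ are dual, the shape operator $A_{V}$ pairs through $\widetilde{g}$ with $\sigma^{\ast}$ rather than with $\sigma$, and one must keep track of this so that the $\sigma^{\ast}$ on the right-hand side of the proposition appears correctly; using Corollary \ref{Takano:cor:1} to transfer $A$ to the other tangential slot is permissible but is not what creates the $\sigma^{\ast}$. Beyond this index juggling the argument is a straight substitution, so I do not anticipate any further obstacle.
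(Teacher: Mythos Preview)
Your proposal is correct and follows exactly the approach implicit in the paper: the paper records the decomposition $\widetilde{R}(X,Y)Z=R(X,Y)Z-A_{\sigma(Y,Z)}X+A_{\sigma(X,Z)}Y+(D_{X}\sigma)(Y,Z)-(D_{Y}\sigma)(X,Z)$ immediately before the proposition and leaves the tangential/normal splitting and the application of Lemma~\ref{Takano:lemma:4} to the reader, which is precisely what you carry out.
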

Also, we have the equation of Gauss relative to $\widetilde{\nabla}^{\ast}$ and the equation of Codazzi relative to $\widetilde{\nabla}^{\ast}$.
\begin{proposition}\label{Takano:prop:1b}
 For any vector field $W$ tangent to $M$, we get
 \begin{eqnarray*}
   \widetilde{g}(\widetilde{R}^{\ast}(X,Y)Z,W)&=&g(R^{\ast}(X,Y)Z,W)-\widetilde{g}(\sigma^{\ast}(Y,Z),\sigma(X,W))\\
   &&+\widetilde{g}(\sigma^{\ast}(X,Z),\sigma(Y,W)), \\
   (\widetilde{R}^{\ast}(X,Y)Z)^{\perp}&=&(D^{\ast}_{X}\sigma^{\ast})(Y,Z)-(D^{\ast}_{Y}\sigma^{\ast})(X,Z).
 \end{eqnarray*}
\end{proposition}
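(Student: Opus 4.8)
The plan is to read Proposition \ref{Takano:prop:1b} off as the $\ast$-dual of Proposition \ref{Takano:prop:1a}, using only the symmetry between starred and unstarred data established in Section \ref{sect-prel}. Recall that $(\widetilde{M},\widetilde{g},\widetilde{\nabla}^{\ast})$ is itself a statistical manifold with $(\widetilde{\nabla}^{\ast})^{\ast}=\widetilde{\nabla}$, that $(M,g,\nabla^{\ast})$ is a statistical submanifold of it whose second fundamental form, shape operator and normal connection are $\sigma^{\ast}$, $A^{\ast}$, $D^{\ast}$, and that $(\nabla^{\ast})^{\ast}=\nabla$, $(\sigma^{\ast})^{\ast}=\sigma$, $(A^{\ast})^{\ast}=A$, $(D^{\ast})^{\ast}=D$. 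Note also that Proposition \ref{Takano:prop:1a} is a statement about an arbitrary statistical submanifold and does not involve the product structure $F$ at all. Consequently it applies verbatim to $(M,g,\nabla^{\ast})\subset(\widetilde{M},\widetilde{g},\widetilde{\nabla}^{\ast})$; performing the substitution in which the ``unstarred'' objects of Proposition \ref{Takano:prop:1a} are taken to be the starred objects here and vice versa turns the two identities of Proposition \ref{Takano:prop:1a} into exactly the two displayed formulas.

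If a self-contained derivation is preferred, I would instead begin from the equation of Gauss relative to $\widetilde{\nabla}^{\ast}$ obtained above and split it into its components tangent and normal to $M$. The terms $R^{\ast}(X,Y)Z$ and $-A^{\ast}_{\sigma^{\ast}(Y,Z)}X+A^{\ast}_{\sigma^{\ast}(X,Z)}Y$ are tangent to $M$, whereas $(D^{\ast}_{X}\sigma^{\ast})(Y,Z)-(D^{\ast}_{Y}\sigma^{\ast})(X,Z)$ is normal; taking the normal part yields the Codazzi-type formula at once. For the first formula I would pair the Gauss equation with a tangent field $W$: the $D^{\ast}$-terms drop out, $\widetilde{g}(R^{\ast}(X,Y)Z,W)=g(R^{\ast}(X,Y)Z,W)$, and the two shape-operator contributions are rewritten by means of Lemma \ref{Takano:lemma:4} in the form $\widetilde{g}(\sigma(X,W),V)=g(W,A^{\ast}_{V}X)$, applied with $V=\sigma^{\ast}(Y,Z)$ and $V=\sigma^{\ast}(X,Z)$, so that $g(A^{\ast}_{\sigma^{\ast}(Y,Z)}X,W)=\widetilde{g}(\sigma^{\ast}(Y,Z),\sigma(X,W))$ and likewise for the other term.

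The computation is essentially bookkeeping; the one point demanding care is the direction of the index swap in Lemma \ref{Takano:lemma:4}. One must use the line relating $\sigma$ (not $\sigma^{\ast}$) to $A^{\ast}$, so that a starred shape operator is paired with an unstarred second fundamental form and the mixed pattern $\widetilde{g}(\sigma^{\ast},\sigma)$ on the right-hand side appears rather than $\widetilde{g}(\sigma^{\ast},\sigma^{\ast})$. Once this pairing is pinned down there is nothing further to do, and as a sanity check the two propositions are mutually consistent: applying $\widetilde{g}(\widetilde{R}(X,Y)Z,W)=-\widetilde{g}(Z,\widetilde{R}^{\ast}(X,Y)W)$ to the formula of Proposition \ref{Takano:prop:1a}, together with the analogous skew-pairing of $R$ and $R^{\ast}$ on $M$, reproduces the formula of Proposition \ref{Takano:prop:1b}.
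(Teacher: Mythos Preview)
Your proposal is correct and matches the paper's approach: the paper displays the decomposition $\widetilde{R}^{\ast}(X,Y)Z=R^{\ast}(X,Y)Z-A^{\ast}_{\sigma^{\ast}(Y,Z)}X+A^{\ast}_{\sigma^{\ast}(X,Z)}Y+(D^{\ast}_{X}\sigma^{\ast})(Y,Z)-(D^{\ast}_{Y}\sigma^{\ast})(X,Z)$ just before the statement and obtains the proposition by taking tangential and normal parts, exactly as in your self-contained derivation (with Lemma~\ref{Takano:lemma:4} supplying the conversion $g(A^{\ast}_{\sigma^{\ast}(\cdot,\cdot)}X,W)=\widetilde{g}(\sigma^{\ast}(\cdot,\cdot),\sigma(X,W))$). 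Your duality argument from Proposition~\ref{Takano:prop:1a} is an equally valid shortcut the paper does not spell out but which is implicit in the parallel presentation.
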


For any vector fields $X, Y$ tangent to $M$ and  $V$ normal to $M$, we obtain
\begin{align*}
\widetilde{R}(X,Y)V&=-(\nabla_{X}A)_{V}Y+(\nabla_{Y}A)_{V}X+R^{\perp}(X,Y)V-\sigma(X,A_{V}Y)\\
& \quad +\sigma(Y,A_{V}X),
\end{align*}
\begin{align*}
\widetilde{R}^{\ast}(X,Y)V&=-(\nabla^{\ast}_{X}A^{\ast})_{V}Y+(\nabla^{\ast}_{Y}A^{\ast})_{V}X+(R^{\ast})^{\perp}(X,Y)V-\sigma^{\ast}(X,A^{\ast}_{V}Y)\\
&\quad+\sigma^{\ast}(Y,A^{\ast}_{V}X),
\end{align*}
where we put
\begin{eqnarray*}
 (\nabla_{X}A)_{V}Y&=&\nabla_{X}(A_{V}Y)-A_{D_{X}V}Y-A_{V}(\nabla_{X}Y),  \\
R^{\perp}(X,Y)V&=&D_{X}(D_{Y}V)-D_{Y}(D_{X}V)-D_{[X,Y]}V, \\
(\nabla^{\ast}_{X}A^{\ast})_{V}Y&=&\nabla^{\ast}_{X}(A^{\ast}_{V}Y)-A^{\ast}_{D^{\ast}_{X}V}Y-A^{\ast}_{V}(\nabla^{\ast}_{X}Y),  \\
(R^{\ast})^{\perp}(X,Y)V&=&D^{\ast}_{X}(D^{\ast}_{Y}V)-D^{\ast}_{Y}(D^{\ast}_{X}V)-D^{\ast}_{[X,Y]}V. \\
\end{eqnarray*}
Hence we have equation of Ricci relative to $\widetilde{\nabla}$.

\begin{proposition} \label{Takano:prop:1c}
 For any vector field $U$ normal to $M$, we get
 \begin{equation*}
    \widetilde{g}(\widetilde{R}(X,Y)V,U)=\widetilde{g}(R^{\perp}(X,Y)V,U)+g([A^{\ast}_{U},A_{V}]X,Y ).
 \end{equation*}
\end{proposition}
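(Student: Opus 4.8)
The plan is to start from the decomposition of $\widetilde{R}(X,Y)V$ into its tangential and normal parts recorded just before the statement, namely
\[
\widetilde{R}(X,Y)V=-(\nabla_{X}A)_{V}Y+(\nabla_{Y}A)_{V}X+R^{\perp}(X,Y)V-\sigma(X,A_{V}Y)+\sigma(Y,A_{V}X),
\]
and take the $\widetilde{g}$-inner product of both sides with the normal field $U$. The terms $-(\nabla_{X}A)_{V}Y+(\nabla_{Y}A)_{V}X$ are tangent to $M$ by the very definition of $(\nabla_{X}A)_{V}$, hence $\widetilde{g}$-orthogonal to $U$ and they drop out. So only $\widetilde{g}(R^{\perp}(X,Y)V,U)$ and the two second-fundamental-form terms survive.

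Next I would rewrite the surviving $\sigma$-terms using Lemma \ref{Takano:lemma:4}: from $\widetilde{g}(\sigma(X,Y),V)=g(Y,A^{\ast}_{V}X)$ one gets $\widetilde{g}(\sigma(X,A_{V}Y),U)=g(A_{V}Y,A^{\ast}_{U}X)$ and $\widetilde{g}(\sigma(Y,A_{V}X),U)=g(A_{V}X,A^{\ast}_{U}Y)$. At this point I would invoke Corollary \ref{Takano:cor:1}, which guarantees that both $A_{V}$ and $A^{\ast}_{U}$ are $g$-self-adjoint; this lets me shift the operators across the inner product, turning $g(A_{V}Y,A^{\ast}_{U}X)$ into $g(Y,A_{V}A^{\ast}_{U}X)$ and $g(A_{V}X,A^{\ast}_{U}Y)=g(A^{\ast}_{U}Y,A_{V}X)$ into $g(Y,A^{\ast}_{U}A_{V}X)$. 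Collecting the two contributions with their signs yields $g(Y,(A^{\ast}_{U}A_{V}-A_{V}A^{\ast}_{U})X)=g([A^{\ast}_{U},A_{V}]X,Y)$, and adding back $\widetilde{g}(R^{\perp}(X,Y)V,U)$ gives the claimed identity.

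The computation is essentially mechanical; the only place that requires a little care is the step where the shape operators are moved across $g$, since in the statistical setting the Weingarten maps are not a priori symmetric — one must explicitly use Corollary \ref{Takano:cor:1} (equivalently Lemma \ref{Takano:lemma:4} together with the duality $(A^{\ast})^{\ast}=A$) rather than the classical self-adjointness of the shape operator. One should also keep careful track of which shape operator, $A$ or $A^{\ast}$, is paired with which normal vector, as interchanging them would destroy the commutator structure on the right-hand side.
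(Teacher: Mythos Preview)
Your proof is correct and follows exactly the route the paper intends: the paper records the decomposition of $\widetilde{R}(X,Y)V$ and then states the Ricci equation with the words ``Hence we have equation of Ricci relative to $\widetilde{\nabla}$,'' so the unwritten step is precisely the pairing with $U$ and the use of Lemma~\ref{Takano:lemma:4} and Corollary~\ref{Takano:cor:1} that you carried out. Your care in distinguishing $A$ from $A^{\ast}$ and in invoking the $g$-self-adjointness from Corollary~\ref{Takano:cor:1} rather than the classical symmetry is well placed.
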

Also  we have the equation Ricci relative to $\widetilde{\nabla}^{\ast}$.
\begin{proposition} \label{Takano:prop:1d}
 For any vector field $U$ normal to $M$, we get
 \begin{equation*}
    \widetilde{g}(\widetilde{R}^{\ast}(X,Y)V,U)=\widetilde{g}((R^{\ast})^{\perp}(X,Y)V,U)+g([A_{U},A^{\ast}_{V}]X,Y ).
 \end{equation*}
\end{proposition}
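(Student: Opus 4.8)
The plan is to start from the Ricci-type decomposition of $\widetilde{R}^{\ast}(X,Y)V$ displayed just before the statement, namely
\[
\widetilde{R}^{\ast}(X,Y)V=-(\nabla^{\ast}_{X}A^{\ast})_{V}Y+(\nabla^{\ast}_{Y}A^{\ast})_{V}X+(R^{\ast})^{\perp}(X,Y)V-\sigma^{\ast}(X,A^{\ast}_{V}Y)+\sigma^{\ast}(Y,A^{\ast}_{V}X),
\]
and to pair it with an arbitrary normal vector field $U$. The first two terms on the right-hand side are tangent to $M$, hence $\widetilde{g}$-orthogonal to $U$ and they drop out; of the remaining three terms, which are all normal, $(R^{\ast})^{\perp}(X,Y)V$ contributes exactly the first summand $\widetilde{g}((R^{\ast})^{\perp}(X,Y)V,U)$.

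Next I would rewrite the two second-fundamental-form terms using Lemma \ref{Takano:lemma:4}, which gives $\widetilde{g}(\sigma^{\ast}(X,Z),U)=g(Z,A_{U}X)$ for $Z$ tangent to $M$. Applying this with $Z=A^{\ast}_{V}Y$ and then with $Z=A^{\ast}_{V}X$ turns $-\widetilde{g}(\sigma^{\ast}(X,A^{\ast}_{V}Y),U)+\widetilde{g}(\sigma^{\ast}(Y,A^{\ast}_{V}X),U)$ into $-g(A^{\ast}_{V}Y,A_{U}X)+g(A^{\ast}_{V}X,A_{U}Y)$. Using the symmetry of $g$ together with the fact that the shape operators $A_{U}$ and $A^{\ast}_{V}$ are self-adjoint with respect to $g$ (Corollary \ref{Takano:cor:1}), this equals $g(A_{U}A^{\ast}_{V}X,Y)-g(A^{\ast}_{V}A_{U}X,Y)=g([A_{U},A^{\ast}_{V}]X,Y)$, which is the claimed second summand. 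Collecting the two contributions yields the asserted identity.

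There is no real conceptual obstacle here; the only point requiring care is the bookkeeping, namely keeping track of which shape operator is self-adjoint with respect to which metric, and pinning down the correct sign and order in the commutator $[A_{U},A^{\ast}_{V}]$. As an alternative that avoids the computation entirely, one may observe that the whole submanifold formalism is symmetric under the involution exchanging $(\widetilde{\nabla},\nabla,\sigma,A,D)$ with $(\widetilde{\nabla}^{\ast},\nabla^{\ast},\sigma^{\ast},A^{\ast},D^{\ast})$ (compare Lemma \ref{Takano:lemma:2} and the duality relations recorded above); under this involution Proposition \ref{Takano:prop:1c} becomes precisely the present statement, so Proposition \ref{Takano:prop:1d} follows immediately by duality.
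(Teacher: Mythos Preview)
Your proposal is correct and is exactly the argument the paper has in mind: the paper records the decomposition of $\widetilde{R}^{\ast}(X,Y)V$ into tangential and normal pieces and then simply says ``Also we have the equation Ricci relative to $\widetilde{\nabla}^{\ast}$,'' leaving the pairing with $U$ and the use of Lemma~\ref{Takano:lemma:4} and Corollary~\ref{Takano:cor:1} implicit. Your alternative derivation by applying the $(\widetilde{\nabla},\widetilde{\nabla}^{\ast})$-duality to Proposition~\ref{Takano:prop:1c} is likewise in the spirit of the paper's presentation.
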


From $\widetilde{R}(X,Y)FZ=F\widetilde{R}(X,Y)Z$ and $\widetilde{R}(X,Y)FV=F\widetilde{R}(X,Y)V$, we find

\begin{proposition} \label{Takano:prop:1e}
 Let $(M,g)$  be the submanifold of the locally product-like statistical manifold $\widetilde{M}$. For any vector fields $X,Y,Z$ tangent to $M$ and any vector field $V$ normal to $M$, we have
\begin{eqnarray*}
&&R(X,Y)fZ-A_{\sigma(Y,fZ)}X+A_{\sigma(X,fZ)}Y-(\nabla_{X}A)_{hZ}Y+(\nabla_{Y}A)_{hZ}X \\
&&=f(R(X,Y)Z)-f(A_{\sigma(Y,Z)}X)+f(A_{\sigma(X,Z)}Y)+t\left\{(D_{X}\sigma)(Y,Z)\right.\\
&&\left. \quad-(D_{Y}\sigma)(X,Z) \right\}, \\
&&R^{\perp}(X,Y)hZ-\sigma(X,A_{hZ}Y)+\sigma(Y,A_{hZ}X)+(D_{X}\sigma)(Y,fZ)\\
&& \quad -(D_{Y}\sigma)(X,fZ)=h(R(X,Y)Z)-h(A_{\sigma(Y,Z)}X)+h(A_{\sigma(X,Z)}Y)\\
&& \quad +s\{(D_{X}\sigma)(Y,Z)-(D_{Y}\sigma)(X,Z) \}, \\
&&R(X,Y)tV-A_{\sigma(Y,tV)}X+A_{\sigma(X,tV)}Y-(\nabla_{X}A)_{sV}Y+(\nabla_{Y}A)_{sV}X \\
&&=-f((\nabla_{X}A)_{V}Y)+f((\nabla_{Y}A)_{V}X)+t(R^{\perp}(X,Y)V)-t(\sigma(X,A_{V}Y))\\
&&\quad +t(\sigma(Y,A_{V}X)),\\
&&R^{\perp}(X,Y)sV-\sigma(X,A_{sV}Y)+\sigma(Y,A_{sV}X)+(D_{X}\sigma)(Y,tV)\\
&&-(D_{Y}\sigma)(X,tV)=s(R^{\perp}(X,Y)V)-s(\sigma(X,A_{V}Y))+s(\sigma(Y,A_{V}X))\\
&&-h((\nabla_{X}A)_{V}Y)+h((\nabla_{Y}A)_{V}X).
 \end{eqnarray*}
\end{proposition}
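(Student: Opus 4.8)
The statement of Proposition 3.14 splits $\widetilde R(X,Y)FZ = F\widetilde R(X,Y)Z$ and $\widetilde R(X,Y)FV = F\widetilde R(X,Y)V$ into tangential and normal components. So the plan is: (i) expand the left-hand sides using the decompositions $FZ = fZ + hZ$ and $FV = tV + sV$; (ii) expand the right-hand sides using $F(\text{tangent}) = f + h$ and $F(\text{normal}) = t + s$ applied to the Gauss–Codazzi–Ricci expressions for $\widetilde R(X,Y)Z$ and $\widetilde R(X,Y)V$ already recorded just before the proposition; (iii) match tangential parts with tangential parts and normal parts with normal parts.

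First I would verify the identity $\widetilde R(X,Y)FZ = F\widetilde R(X,Y)Z$. Since $F$ is parallel with respect to $\widetilde\nabla$ (the manifold is locally product-like statistical), $F$ commutes with $\widetilde\nabla_X$ for every $X$, hence with the curvature operator $\widetilde R(X,Y) = \widetilde\nabla_X\widetilde\nabla_Y - \widetilde\nabla_Y\widetilde\nabla_X - \widetilde\nabla_{[X,Y]}$; this gives both $\widetilde R(X,Y)FZ = F\widetilde R(X,Y)Z$ and $\widetilde R(X,Y)FV = F\widetilde R(X,Y)V$ with no further work. This is the step flagged in the sentence preceding the proposition.

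Next I would take $\widetilde R(X,Y)FZ$: writing $FZ = fZ + hZ$, I apply the Gauss equation to the tangent field $fZ$ and the Ricci-type equation to the normal field $hZ$. Collecting the tangent-to-$M$ components yields
$$
R(X,Y)fZ - A_{\sigma(Y,fZ)}X + A_{\sigma(X,fZ)}Y - (\nabla_X A)_{hZ}Y + (\nabla_Y A)_{hZ}X,
$$
and the normal components yield the corresponding $R^{\perp}$-and-$\sigma$ expression. For the right-hand side I apply $F = f + h$ (on tangent vectors) and $F = t + s$ (on normal vectors) to
$$
\widetilde R(X,Y)Z = R(X,Y)Z - A_{\sigma(Y,Z)}X + A_{\sigma(X,Z)}Y + (D_X\sigma)(Y,Z) - (D_Y\sigma)(X,Z);
$$
the tangential pieces $R(X,Y)Z$, $A_{\sigma(Y,Z)}X$, $A_{\sigma(X,Z)}Y$ get hit by $f$ and $h$, while the normal piece $(D_X\sigma)(Y,Z) - (D_Y\sigma)(X,Z)$ gets hit by $t$ and $s$. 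Equating tangential parts gives the first displayed equation; equating normal parts gives the second. The same bookkeeping applied to $\widetilde R(X,Y)FV = F\widetilde R(X,Y)V$, using $FV = tV + sV$ together with the Gauss/Ricci expansions of $\widetilde R(X,Y)tV$ (tangent) and $\widetilde R(X,Y)sV$ (normal) on the left, and $F = f+h$, $F = t+s$ applied to the Ricci-type expression for $\widetilde R(X,Y)V$ on the right, produces the third and fourth displayed equations.

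The only real obstacle is bookkeeping: one must be careful that $A_{\sigma(\cdot,\cdot)}$-type terms are genuinely tangent (so they receive $f$ and $h$) and that $(D_\cdot\sigma)$-type and $R^\perp$-type terms are genuinely normal (so they receive $t$ and $s$), and one must keep the signs from the Gauss/Codazzi/Ricci equations straight. There is no conceptual difficulty beyond this; the content is entirely the comparison of components, and the parallelism of $F$ is what makes the two sides comparable in the first place.
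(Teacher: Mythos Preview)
Your proposal is correct and follows exactly the approach the paper indicates: the paper's entire ``proof'' is the lead-in sentence ``From $\widetilde{R}(X,Y)FZ=F\widetilde{R}(X,Y)Z$ and $\widetilde{R}(X,Y)FV=F\widetilde{R}(X,Y)V$, we find,'' and your write-up simply fills in the routine tangential/normal bookkeeping using the Gauss--Codazzi--Ricci decompositions already recorded just before the proposition.
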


\begin{corollary}
 Let $(M,g)$  be the submanifold of the locally product-like statistical manifold $\widetilde{M}$. For any vector fields $X,Y,Z$ tangent to $M$ and any vector field $V$ normal to $M$, we have
 \begin{eqnarray*}
&&R^{\ast}(X,Y)f^{\ast}Z-A^{\ast}_{\sigma^{\ast}(Y,f^{\ast}Z)}X+A^{\ast}_{\sigma^{\ast}(X,f^{\ast}Z)}Y-(\nabla^{\ast}_{X}A^{\ast})_{h^{\ast}Z}Y\\
&&+(\nabla^{\ast}_{Y}A^{\ast})_{h^{\ast}Z}X =f^{\ast}(R^{\ast}(X,Y)Z)-f^{\ast}(A^{\ast}_{\sigma^{\ast}(Y,Z)}X)+f^{\ast}(A^{\ast}_{\sigma^{\ast}(X,Z)}Y)\\
&&+t^{\ast}\{(D^{\ast}_{X}\sigma^{\ast})(Y,Z)-(D^{\ast}_{Y}\sigma^{\ast})(X,Z) \},\\
&&(R^{\ast})^{\perp}(X,Y)h^{\ast}Z-\sigma^{\ast}(X,A^{\ast}_{h^{\ast}Z}Y)+\sigma^{\ast}(Y,A^{\ast}_{h^{\ast}Z}X)+(D^{\ast}_{X}\sigma^{\ast})(Y,f^{\ast}Z)\\
&&-(D^{\ast}_{Y}\sigma^{\ast})(X,f^{\ast}Z)=h^{\ast}(R^{\ast}(X,Y)Z)-h^{\ast}(A^{\ast}_{\sigma^{\ast}(Y,Z)}X)+h^{\ast}(A^{\ast}_{\sigma^{\ast}(X,Z)}Y) \\
&&+s^{\ast}\{(D^{\ast}_{X}\sigma^{\ast})(Y,Z)-(D^{\ast}_{Y}\sigma^{\ast})(X,Z) \}, \\
&&R^{\ast}(X,Y)t^{\ast}V-A^{\ast}_{\sigma^{\ast}(Y,t^{\ast}V)}X+A^{\ast}_{\sigma^{\ast}(X,t^{\ast}V)}Y-(\nabla^{\ast}_{X}A^{\ast})_{s^{\ast}V}Y\\
&&+(\nabla^{\ast}_{Y}A^{\ast})_{s^{\ast}V}X=-f^{\ast}((\nabla^{\ast}_{X}A^{\ast})_{V}Y)+f^{\ast}((\nabla^{\ast}_{Y}A^{\ast})_{V}X) \\
&&+t^{\ast}((R^{\perp})^{\ast}(X,Y)V)-t^{\ast}(\sigma^{\ast}(X,A^{\ast}_{V}Y))+t^{\ast}(\sigma^{\ast}(Y,A^{\ast}_{V}X)),\\
&&(R^{\ast})^{\perp}(X,Y)s^{\ast}V-\sigma^{\ast}(X,A^{\ast}_{s^{\ast}V}Y)+\sigma^{\ast}(Y,A^{\ast}_{s^{\ast}V}X)+(D^{\ast}_{X}\sigma^{\ast})(Y,t^{\ast}V)\\
&&-(D^{\ast}_{Y}\sigma^{\ast})(X,t^{\ast}V)=s^{\ast}((R^{\ast})^{\perp}(X,Y),V)-s^{\ast}(\sigma^{\ast}(X,A^{\ast}_{V}Y))\\
&&+s^{\ast}(\sigma^{\ast}(Y,A^{\ast}_{V}X))-h^{\ast}((\nabla^{\ast}_{X}A^{\ast})_{V}Y)+h^{\ast}((\nabla^{\ast}_{Y}A^{\ast})_{V}X).
 \end{eqnarray*}
 \normalsize
\end{corollary}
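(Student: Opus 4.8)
\emph{Proof proposal.}
The plan is to obtain the four identities exactly as Proposition~\ref{Takano:prop:1e} was obtained, but working in the dual statistical structure. By Lemma~\ref{Takano:lemma:2}, since $(\widetilde{M},\widetilde{g},\widetilde{\nabla},F)$ is a locally product-like statistical manifold, so is $(\widetilde{M},\widetilde{g},\widetilde{\nabla}^{\ast},F^{\ast})$; in particular $\widetilde{\nabla}^{\ast}F^{\ast}=0$, whence $\widetilde{R}^{\ast}(X,Y)F^{\ast}Z=F^{\ast}\widetilde{R}^{\ast}(X,Y)Z$ and $\widetilde{R}^{\ast}(X,Y)F^{\ast}V=F^{\ast}\widetilde{R}^{\ast}(X,Y)V$ for all $X,Y,Z$ tangent and $V$ normal to $M$.

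Next I would decompose $F^{\ast}Z=f^{\ast}Z+h^{\ast}Z$ and $F^{\ast}V=t^{\ast}V+s^{\ast}V$, and expand both sides of these two curvature relations using the Gauss and Weingarten formulas for $\widetilde{\nabla}^{\ast}$ together with the Gauss, Codazzi and Ricci equations relative to $\widetilde{\nabla}^{\ast}$ recorded in Propositions~\ref{Takano:prop:1b} and~\ref{Takano:prop:1d}. Concretely, to handle $\widetilde{R}^{\ast}(X,Y)F^{\ast}Z$ one splits it as $\widetilde{R}^{\ast}(X,Y)f^{\ast}Z+\widetilde{R}^{\ast}(X,Y)h^{\ast}Z$ and applies, respectively, the starred Gauss--Codazzi formula to the tangential argument $f^{\ast}Z$ and the starred Ricci-type formula to the normal argument $h^{\ast}Z$; on the right-hand side one applies $f^{\ast},h^{\ast},t^{\ast},s^{\ast}$ to the decomposed expression for $\widetilde{R}^{\ast}(X,Y)Z$. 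Comparing the tangential and normal components on the two sides produces the first two stated identities; the analogous treatment of $\widetilde{R}^{\ast}(X,Y)F^{\ast}V=F^{\ast}\widetilde{R}^{\ast}(X,Y)V$ yields the last two.

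Since this computation is formally identical to the one already carried out in the proof of Proposition~\ref{Takano:prop:1e}, an alternative (and shorter) route is simply to invoke that proposition for the locally product-like statistical manifold $(\widetilde{M},\widetilde{g},\widetilde{\nabla}^{\ast},F^{\ast})$ and then translate the conclusion back via the duality relations $(\widetilde{\nabla}^{\ast})^{\ast}=\widetilde{\nabla}$, $(\nabla^{\ast})^{\ast}=\nabla$, $(\sigma^{\ast})^{\ast}=\sigma$, $(A^{\ast})^{\ast}=A$, $(D^{\ast})^{\ast}=D$, $(R^{\ast})^{\ast}=R$ and $(f^{\ast})^{\ast}=f$, $(h^{\ast})^{\ast}=h$, $(t^{\ast})^{\ast}=t$, $(s^{\ast})^{\ast}=s$, together with Corollary~\ref{Takano:cor:2} for the structure equations of $f^{\ast},h^{\ast},t^{\ast},s^{\ast}$. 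Either way, the only point requiring care — and the one I expect to be the main bookkeeping obstacle — is keeping the starred/unstarred correspondences consistent throughout, so that (for instance) the operator appearing in front of $(D^{\ast}_{X}\sigma^{\ast})(Y,Z)-(D^{\ast}_{Y}\sigma^{\ast})(X,Z)$ in the first identity is $t^{\ast}$ and not $t$, and the Ricci term $[A_{U},A^{\ast}_{V}]$ (rather than $[A^{\ast}_{U},A_{V}]$) is the one that surfaces; there is no genuine analytic difficulty beyond this.
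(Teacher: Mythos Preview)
Your proposal is correct and matches the paper's own approach: the corollary is stated without proof precisely because it is the dual of Proposition~\ref{Takano:prop:1e}, obtained by applying Lemma~\ref{Takano:lemma:2} to pass to $(\widetilde{M},\widetilde{g},\widetilde{\nabla}^{\ast},F^{\ast})$ and then repeating the decomposition argument (or equivalently just replacing all unstarred objects by their starred counterparts). Your caution about bookkeeping the starred/unstarred correspondences is the only point of care, exactly as you note.
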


Next, in the almost product-like statistical manifold $(\widetilde{M}, \widetilde{g},\widetilde{\nabla}, F)$, we put for any vector fields $X,Y,Z$ on $\widetilde{M}$
\begin{eqnarray}
 \widetilde{R}(X,Y)Z&=&\widetilde{c}[\widetilde{g}(Y,Z)X-\widetilde{g}(X,Z)Y+\widetilde{g}(Y,FZ)FX-\widetilde{g}(X,FZ)FY \nonumber\\
  &&+\left\{\widetilde{g}(FX,Y)-\widetilde{g}(X,FY) \right\}FZ], \label{o}
\end{eqnarray}
where $\widetilde{c}$ is a constant. Then the tensor $\widetilde{R}$ is
satisfied the Bianchi's 1st and 2nd identities, and $F\widetilde{R}(X,Y)=\widetilde{R}(X,Y)F$. Moreover, we have
\begin{eqnarray}
 \widetilde{R}^{\ast}(X,Y)Z&=&\widetilde{c}[\widetilde{g}(Y,Z)X-\widetilde{g}(X,Z)Y+\widetilde{g}(Y,F^{\ast}Z)F^{\ast}X-\widetilde{g}(X,F^{\ast}Z)F^{\ast}Y \nonumber \\
  &&+\left\{\widetilde{g}(F^{\ast}X,Y)-\widetilde{g}(X,F^{\ast}Y) \right\}F^{\ast}Z].
\end{eqnarray}
We discuss the statistical submanifold $M$ of the almost product-like statistical manifold $(\widetilde{M}, \widetilde{g},\widetilde{\nabla}, F)$
satisfying the condition (\ref{o}). For any vector fields $X,Y,Z$ tangent to $M$ and $V$ normal to $M$, we get
\begin{eqnarray*}
&&R(X,Y)Z=\widetilde{c}[g(Y,Z)X-g(X,Z)Y+g(Y,fZ)fX-g(X,fZ)fY \\
&&+\left\{\ g(fX,Y)-g(X,fY) \right\}fZ ]+A_{\sigma(Y,Z)}X-A_{\sigma(X,Z)}Y,\\
&&(D_{X}\sigma)(Y,Z)-(D_{Y}\sigma)(X,Z)=\widetilde{c}[g(Y,fZ)hX-g(X,fZ)hY\\
&&+\left\{\ g(fX,Y)-g(X,fY) \right\}hZ], \\
&&(\nabla_{X}A)_{V}Y-(\nabla_{Y}A)_{V}X=\widetilde{c}[-g(Y,tV)fX+g(X,tV)fY-\\
&&\left\{\ g(fX,Y)-g(X,fY) \right\}tV],\\
&&R^{\perp}(X,Y)V\sigma(X,A_{V}Y)+\sigma(Y,A_{V}X)=\widetilde{c}[g(Y,tV)hX-g(X,tV)hY\\
&&+\left\{\ g(fX,Y)-g(X,fY) \right\}sV].
\end{eqnarray*}
\normalsize
If the second fundamental form $\sigma$ is parallel with respect to $D$, then we find from (\ref{o})
\begin{align*}
   \widetilde{c}[g(Y,fZ)hX-g(X,fZ)hY+\left\{\ g(fX,Y)-g(X,fY) \right\}hZ]=0.
\end{align*}
From this equation, we get $\widetilde{c}=0$ or
\begin{align*}
h[g(Y,fZ)X-g(X,fZ)Y+\left\{\ g(fX,Y)-g(X,fY) \right\}Z]=0,
\end{align*}
We assume $g(Y,fZ)X-g(X,fZ)Y+\left\{\ g(fX,Y)-g(X,fY) \right\}Z=0$, which implies that $f=0$ if $ \text{dim}M>2$.

\begin{theorem}
 Let $(M,g)$ be the statistical submanifold of the locally product-like statistical manifold $\widetilde{M}$ satisfying the condition (\ref{o}). If the second fundamental form $\sigma$ is parallel with respect to $D$, then we get
 \begin{itemize}
     \item [(1)] $\widetilde{c}=0$, that is, $\widetilde{M}$ is flat, or
     \item[(2)] $M$ is $F$-invariant of $\widetilde{M}$, or
     \item[(3)] $M$ is $F$-anti-invariant.
 \end{itemize}
\end{theorem}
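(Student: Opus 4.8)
The plan is to feed the hypothesis that $\sigma$ is parallel with respect to $D$ into the Codazzi-type identity already obtained for a submanifold $M$ of $\widetilde{M}$ under the curvature condition (\ref{o}), namely
\begin{equation*}
(D_{X}\sigma)(Y,Z)-(D_{Y}\sigma)(X,Z)=\widetilde{c}\,[\,g(Y,fZ)hX-g(X,fZ)hY+\{g(fX,Y)-g(X,fY)\}hZ\,].
\end{equation*}
Since $D\sigma=0$ makes the left-hand side vanish, we are left with $\widetilde{c}\,h(\mathcal{T}(X,Y,Z))=0$ for all $X,Y,Z$ tangent to $M$, where, by linearity of $h$,
\begin{equation*}
\mathcal{T}(X,Y,Z):=g(Y,fZ)X-g(X,fZ)Y+\{g(fX,Y)-g(X,fY)\}Z.
\end{equation*}

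I would then split twice. If $\widetilde{c}=0$, then (\ref{o}) gives $\widetilde{R}\equiv0$, so $\widetilde{M}$ is flat, which is alternative (1); so assume $\widetilde{c}\neq0$ and cancel it, leaving $h(\mathcal{T}(X,Y,Z))=0$ identically. If moreover $h\equiv0$, then $F(T_{p}M)\subset T_{p}M$ at every point, i.e.\ $M$ is $F$-invariant by Lemma \ref{Takano:lemma:5}(1), which is alternative (2). There remain two things to prove: that $h\not\equiv0$ together with $h\circ\mathcal{T}\equiv0$ forces $\mathcal{T}\equiv0$, and that $\mathcal{T}\equiv0$ forces $f\equiv0$; the latter, by Lemma \ref{Takano:lemma:5}(2), is precisely alternative (3), $F$-anti-invariance.

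The implication $\mathcal{T}\equiv0\Rightarrow f\equiv0$ I expect to be routine, and it is the step that uses $\dim M>2$: contracting $\mathcal{T}(e_{i},Y,e_{i})=0$ over a local $g$-orthonormal frame $\{e_{i}\}$ and using $g(fX,W)=g(X,f^{\ast}W)$ gives $2f^{\ast}-f-(\mathrm{tr}\,f)\,\mathrm{Id}=0$; passing to the $g$-adjoint (recall $(f^{\ast})^{\ast}=f$) gives $2f-f^{\ast}-(\mathrm{tr}\,f)\,\mathrm{Id}=0$, and subtracting the two yields $f^{\ast}=f$, whence $f=(\mathrm{tr}\,f)\,\mathrm{Id}$ and so $(\dim M)\,\mathrm{tr}\,f=\mathrm{tr}\,f$, forcing $\mathrm{tr}\,f=0$ and $f=0$. (Equivalently, substitute $Y=Z$ directly into $\mathcal{T}(X,Y,Z)=0$.)

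The main obstacle is the implication ``$h\circ\mathcal{T}\equiv0$ and $h\not\equiv0$ $\Rightarrow$ $\mathcal{T}\equiv0$''. The plan is to work pointwise: fix $p$ with $h\neq0$, pair $h(\mathcal{T}(X,Y,Z))=0$ with a normal vector $V$ and use $\widetilde{g}(hX,V)=g(X,t^{\ast}V)$ to turn the identity into a scalar relation involving only the $1$-forms $g(\cdot,fZ)$, $g(\cdot,t^{\ast}V)$ and the $2$-form $\phi(X,Y)=g(fX,Y)-g(X,fY)$. Analysing that relation shows $\phi$ is decomposable and that $f(T_{p}M)$ lies in a subspace of dimension bounded by $1+\mathrm{codim}\,M$; substituting this back into $f^{2}=I-th$, whose corank is at most $\mathrm{codim}\,M$, forces a collision of dimensions and hence the vanishing of the data defining $\mathcal{T}$ at $p$. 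Making this rank estimate sharp enough to bite already under $\dim M>2$ is the delicate point; the rest is bookkeeping with the structure identities of Lemmas \ref{Takano:lemma:5}--\ref{Takano:lemma:7} and the Gauss--Codazzi relations recorded just before the theorem.
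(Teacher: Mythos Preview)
Your outline matches the paper's argument exactly: feed $D\sigma=0$ into the Codazzi identity derived from~(\ref{o}), obtain
\[
\widetilde{c}\,\bigl[\,g(Y,fZ)hX-g(X,fZ)hY+\{g(fX,Y)-g(X,fY)\}hZ\,\bigr]=0,
\]
peel off $\widetilde{c}=0$ as case~(1), and in the remaining case split according to whether $h$ vanishes (case~(2)) or the bracketed tangential expression $\mathcal{T}(X,Y,Z)$ vanishes, the latter forcing $f=0$ and hence case~(3).

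The step you flag as the ``main obstacle'' --- that $h\circ\mathcal{T}\equiv0$ together with $h\not\equiv0$ should force $\mathcal{T}\equiv0$ --- is not justified in the paper either. The paper simply writes ``We assume $g(Y,fZ)X-g(X,fZ)Y+\{g(fX,Y)-g(X,fY)\}Z=0$'' and then deduces $f=0$; no argument is offered to exclude the possibility that $\mathcal{T}$ takes nonzero values lying in $\ker h$. So the gap you identify is real and is equally present in the paper's own proof; your sketched rank argument already goes further than what the paper provides, though as you say it is not yet complete.

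One minor remark: your trace computation for $\mathcal{T}\equiv0\Rightarrow f=0$ in fact only needs $\dim M\neq 1$; the paper states the hypothesis $\dim M>2$ without giving details, so your argument is slightly sharper here than the paper's bare assertion.
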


\begin{corollary}
 Let $(M,g)$ be the statistical submanifold of the locally product-like statistical manifold $\widetilde{M}$ satisfying the condition (\ref{o}). If $M$ is totally geodesic with respect to $\widetilde{\nabla}$, then we get
 \begin{itemize}
     \item [(1)] $\widetilde{c}=0$, that is, $\widetilde{M}$ is flat, or
     \item[(2)] $M$ is $F$-invariant of $\widetilde{M}$, or
      \item[(3)] $M$ is $F$-anti-invariant, which is of constant curvature $\widetilde{c}$.
 \end{itemize}
\end{corollary}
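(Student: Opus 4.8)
The plan is to obtain this corollary as an essentially immediate consequence of the preceding Theorem, supplemented by one short curvature computation to justify the ``constant curvature $\widetilde c$'' clause in case (3).

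First I would observe that if $M$ is totally geodesic with respect to $\widetilde{\nabla}$, then $\sigma\equiv 0$, and hence $\sigma$ is trivially parallel with respect to the normal connection $D$ (the covariant derivative of the identically vanishing tensor is zero). Thus the hypotheses of the preceding Theorem are satisfied, and we conclude that at least one of the following holds: $\widetilde c=0$, so that $\widetilde M$ is flat; $M$ is $F$-invariant in $\widetilde M$, i.e. $h\equiv 0$; or $M$ is $F$-anti-invariant, i.e. $f\equiv 0$. This already yields alternatives (1) and (2) verbatim.

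Next I would treat alternative (3). Here $f\equiv 0$ and, since $M$ is totally geodesic, $\sigma\equiv 0$ as well. Substituting these into the Gauss equation associated with the curvature condition (\ref{o}) derived above,
\[
R(X,Y)Z=\widetilde c\,[\,g(Y,Z)X-g(X,Z)Y+g(Y,fZ)fX-g(X,fZ)fY+\{g(fX,Y)-g(X,fY)\}fZ\,]+A_{\sigma(Y,Z)}X-A_{\sigma(X,Z)}Y,
\]
every term containing $f$ disappears and the shape-operator terms vanish, leaving $R(X,Y)Z=\widetilde c\{g(Y,Z)X-g(X,Z)Y\}$ for all $X,Y,Z$ tangent to $M$. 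This is exactly the statement that the statistical submanifold $(M,g,\nabla)$ is a space of constant curvature $\widetilde c$, which completes case (3). One may also note, via Lemma \ref{Takano:lemma:5}(2) together with the earlier fact that $f=0$ is equivalent to $f^{\ast}=0$, that in this case $M$ is simultaneously $F^{\ast}$-anti-invariant, so the analogous statement for $R^{\ast}$ and $\widetilde{\nabla}^{\ast}$ follows in the same way.

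I do not anticipate any genuine obstacle: all of the nontrivial content is already packaged in the preceding Theorem and in the Gauss equation for ambient curvature tensors of the form (\ref{o}). The only point that needs a moment's care is the verification that setting $f\equiv 0$ collapses all five bracketed terms of (\ref{o}) to the two ``constant-curvature'' terms, but this is immediate from the explicit form of (\ref{o}) once one also uses $\sigma\equiv 0$ to discard the remaining correction terms.
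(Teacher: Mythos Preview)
Your proposal is correct and follows exactly the line the paper implicitly intends: the corollary is stated without proof immediately after the theorem, and the only extra content beyond the theorem is the ``constant curvature $\widetilde c$'' clause in case (3), which you correctly obtain by substituting $f\equiv 0$ and $\sigma\equiv 0$ into the Gauss equation derived just before the theorem. The final remark about $F^{\ast}$-anti-invariance is a harmless aside not needed for the corollary as stated.
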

\begin{corollary}
 Let $(M,g)$ be the statistical submanifold of the locally product-like statistical manifold $\widetilde{M}$ satisfying the condition (\ref{o}). If the second fundamental form $\sigma^{\ast}$ is parallel with respect to $D^{\ast}$, then we get
  \begin{itemize}
     \item [(1)] $\widetilde{c}=0$, that is, $\widetilde{M}$ is flat, or
     \item[(2)] $M$ is $F^{\ast}$-invariant of $\widetilde{M}$, or

     \item[(3)] $M$ is $F^{\ast}$-anti-invariant.
 \end{itemize}
\end{corollary}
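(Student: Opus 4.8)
The plan is to deduce this from the Theorem above by passing to the dual statistical structure. By Lemma \ref{Takano:lemma:2}, the quadruple $(\widetilde{M},\widetilde{g},\widetilde{\nabla}^{\ast},F^{\ast})$ is again a locally product-like statistical manifold, and the submanifold data on $M$ relative to $\widetilde{\nabla}^{\ast}$ is the statistical submanifold $(M,g,\nabla^{\ast})$, with second fundamental form $\sigma^{\ast}$, shape operator $A^{\ast}$ and normal connection $D^{\ast}$. I would first check that the hypothesis (\ref{o}) on $\widetilde{R}$ is equivalent to the displayed formula for $\widetilde{R}^{\ast}$ recorded just above the present statement: starting from $\widetilde{g}(\widetilde{R}(X,Y)Z,W)=-\widetilde{g}(Z,\widetilde{R}^{\ast}(X,Y)W)$, substituting (\ref{o}), and using (\ref{eq6}) and (\ref{eq7}) to rewrite each term through $F^{\ast}$, one gets exactly the $F^{\ast}$-analogue of (\ref{o}) for $\widetilde{R}^{\ast}$. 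Hence $(\widetilde{M},\widetilde{g},\widetilde{\nabla}^{\ast},F^{\ast})$ satisfies the same structural curvature condition with $F$ replaced by $F^{\ast}$.

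With this in hand, I would simply apply the Theorem above to $(\widetilde{M},\widetilde{g},\widetilde{\nabla}^{\ast},F^{\ast})$ and its statistical submanifold $(M,g,\nabla^{\ast})$. The assumption that $\sigma^{\ast}$ is parallel with respect to $D^{\ast}$ is precisely the parallelism hypothesis of the Theorem for this dual submanifold, and the trichotomy it yields reads: $\widetilde{c}=0$; or $M$ is $F^{\ast}$-invariant in $\widetilde{M}$; or $M$ is $F^{\ast}$-anti-invariant. These are exactly alternatives (1)–(3) of the statement.

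Alternatively, if a self-contained argument is preferred, one mirrors the derivation that precedes the Theorem, replacing Lemma \ref{Takano:lemma:7} by Corollary \ref{Takano:cor:2} and Propositions \ref{Takano:prop:1b} and \ref{Takano:prop:1d} by their unstarred counterparts throughout. This produces the starred Gauss--Codazzi--Ricci identities for a submanifold of a manifold satisfying the $F^{\ast}$-version of (\ref{o}); imposing $D^{\ast}\sigma^{\ast}=0$ in the resulting Codazzi equation forces
\begin{equation*}
\widetilde{c}\bigl[g(Y,f^{\ast}Z)h^{\ast}X-g(X,f^{\ast}Z)h^{\ast}Y+\{g(f^{\ast}X,Y)-g(X,f^{\ast}Y)\}h^{\ast}Z\bigr]=0 .
\end{equation*}
From here either $\widetilde{c}=0$, or $h^{\ast}$ annihilates the bracketed tensor; if that tensor itself vanishes identically then (as in the proof of the Theorem, using $\dim M>2$) one obtains $f^{\ast}=0$, i.e.\ $M$ is $F^{\ast}$-anti-invariant, and otherwise one is led to $h^{\ast}=0$, i.e.\ $M$ is $F^{\ast}$-invariant.

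The only genuinely delicate point in either route is the bookkeeping in the duality transfer, namely confirming that every occurrence of $F$, of $g(F\,\cdot\,,\cdot)$ and of $F\,\cdot$ in (\ref{o}) turns into the corresponding $F^{\ast}$-expression upon dualizing, so that the hypothesis of the Theorem is met verbatim for the dual data. Once that identification is pinned down, the remainder is an immediate appeal to results already established, together with the same linear-algebra step (valid for $\dim M>2$) used in the Theorem to pass from ``$h^{\ast}$ kills the bracket'' to one of the two invariance conclusions.
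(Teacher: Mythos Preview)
Your proposal is correct and matches the paper's intended approach: the paper states this as a Corollary with no separate proof, relying on the duality between $(\widetilde{\nabla},F)$ and $(\widetilde{\nabla}^{\ast},F^{\ast})$ established in Lemma \ref{Takano:lemma:2} together with the $F^{\ast}$-form of (\ref{o}) for $\widetilde{R}^{\ast}$ already displayed in the text. Your explicit verification of the curvature transfer and your alternative self-contained mirror argument are both sound elaborations of this implicit duality step.
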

\begin{corollary}
Let $(M,g)$ be the statistical submanifold of the locally product-like statistical manifold $\widetilde{M}$ satisfying the condition (\ref{o}). If $M$ is totally geodesic with respect to $\widetilde{\nabla}^{\ast}$, then we get \begin{itemize}
     \item [(1)] $\widetilde{c}=0$, that is, $\widetilde{M}$ is flat, or
     \item[(2)] $M$ is $F^{\ast}$-invariant of $\widetilde{M}$, or
     \item[(3)] $M$ is $F^{\ast}$-anti-invariant, which is of constant curvature $\widetilde{c}$.
 \end{itemize}
\end{corollary}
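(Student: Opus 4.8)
\emph{Proof proposal.} The plan is to deduce this corollary from the two preceding corollaries by the conjugation symmetry of the setup, and then to supply the one extra Gauss-equation computation needed for the constant-curvature clause. By Lemma~\ref{Takano:lemma:2} the triple $(\widetilde{M},\widetilde{g},\widetilde{\nabla}^{\ast},F^{\ast})$ is again a locally product-like statistical manifold, and by the displayed formula for $\widetilde{R}^{\ast}$ following (\ref{o}) its curvature tensor satisfies the same ansatz (\ref{o}) with $F$ replaced by $F^{\ast}$. Hence everything established for $(\widetilde{\nabla},F)$ transfers verbatim to $(\widetilde{\nabla}^{\ast},F^{\ast})$. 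Now ``$M$ totally geodesic with respect to $\widetilde{\nabla}^{\ast}$'' means exactly $\sigma^{\ast}=0$ (equivalently $A\equiv 0$, by Lemma~\ref{Takano:lemma:4}), so $\sigma^{\ast}$ is trivially parallel with respect to $D^{\ast}$, and the preceding corollary on $\sigma^{\ast}$ parallel with respect to $D^{\ast}$ already yields the trichotomy: either $\widetilde{c}=0$, or $M$ is $F^{\ast}$-invariant, or $M$ is $F^{\ast}$-anti-invariant.

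It remains to upgrade the third alternative to ``$M$ is of constant curvature $\widetilde{c}$''; this is precisely where the full vanishing $\sigma^{\ast}=0$ is used rather than mere parallelism. Assume $F^{\ast}(T_pM)\subset (T_pM)^{\perp}$, i.e. $f^{\ast}=0$; by Lemma~\ref{Takano:lemma:5}(2) this also forces $f=0$. Feeding tangent vector fields $X,Y,Z,W$ into the Gauss equation relative to $\widetilde{\nabla}^{\ast}$ (Proposition~\ref{Takano:prop:1b}) and using $\sigma^{\ast}=0$ gives $\widetilde{g}(\widetilde{R}^{\ast}(X,Y)Z,W)=g(R^{\ast}(X,Y)Z,W)$, while the conjugate of (\ref{o}) collapses, since every term in which $F^{\ast}$ is applied to a tangent vector vanishes, to $\widetilde{g}(\widetilde{R}^{\ast}(X,Y)Z,W)=\widetilde{c}\{g(Y,Z)g(X,W)-g(X,Z)g(Y,W)\}$. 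Therefore $R^{\ast}(X,Y)Z=\widetilde{c}\{g(Y,Z)X-g(X,Z)Y\}$, so $(M,g,\nabla^{\ast})$ has constant curvature $\widetilde{c}$; the identical argument with Proposition~\ref{Takano:prop:1a} (still $\sigma^{\ast}=0$) and (\ref{o}) with $f=0$ gives $R(X,Y)Z=\widetilde{c}\{g(Y,Z)X-g(X,Z)Y\}$ as well, so $M$ is of constant curvature $\widetilde{c}$ for both dual connections.

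I do not expect a genuine obstacle; the argument is the conjugation symmetry plus the routine Gauss computation. The only delicate points are bookkeeping: checking that ``totally geodesic with respect to $\widetilde{\nabla}^{\ast}$'' translates to the vanishing of $\sigma^{\ast}$ (hence of $A$, not of $A^{\ast}$); verifying that the ansatz (\ref{o}) really does pass to $\widetilde{R}^{\ast}$ under $F\mapsto F^{\ast}$ so that Lemmas~\ref{Takano:lemma:1}, \ref{Takano:lemma:2} and \ref{Takano:lemma:5} can be applied cleanly; and, as in the $\widetilde{\nabla}$-case, recording that the algebraic identity forcing $f^{\ast}=0$ in the anti-invariant alternative needs $\dim M>2$, so the low-dimensional case is acknowledged exactly as before.
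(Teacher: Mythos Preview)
Your proposal is correct and follows essentially the same route the paper has in mind: the corollary is stated without proof because it is the conjugate of the corresponding $\widetilde{\nabla}$-corollary via Lemmas~\ref{Takano:lemma:1}, \ref{Takano:lemma:2} and \ref{Takano:lemma:5}, together with the displayed formula for $\widetilde{R}^{\ast}$ after (\ref{o}). Your extra Gauss-equation computation for the constant-curvature clause (using $\sigma^{\ast}=0$ and $f^{\ast}=f=0$) is exactly the dual of the computation implicit in the paper's $R(X,Y)Z$ formula just before the Theorem, so nothing is missing.
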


\section{Hypersurfaces of almost product-like Riemannian manifolds}
Let $(\widetilde{M},\widetilde{g},F)$ be an almost product-like Riemannian manifold and $(M,g)$ be a hypersurface of $\widetilde{M}$. If $N$ is the unit normal vector field of $M$, then we put
\begin{eqnarray*}
FN=\xi+\mu_{1}N
\end{eqnarray*}
and
\begin{eqnarray*}
F^{\ast}N=\xi^{\ast}+\mu_{2}N,
\end{eqnarray*}
where $\xi$ and $\xi^{\ast}$ are tangential parts of $FN$ and $F^{\ast}N$, respectively. Here, $\mu_{1}$ and $\mu_{2}$ are smooth functions on $\widetilde{M}$. Considering (\ref{eq6}) in the last two equations, we obtain $\mu_{1}=\mu_{2}$. Thus, we can write
\begin{equation} \label{eq11}
FN=\xi+\mu N
\end{equation}
and
\begin{equation} \label{eq12}
F^{\ast}N=\xi^{\ast}+\mu N,
\end{equation}
where $\mu$ is a smooth function on $\widetilde{M}$. Using (\ref{eq11}) and (\ref{eq12}) in (\ref{eq7}), we derive
\begin{equation} \label{eq13}
1-\mu^{2}=g(\xi,\xi^{\ast}).
\end{equation}
Based on (\ref{eq13}), we obtain the following lemma:
\begin{lemma} \label{lem2}
Let $(M,g)$ be a hypersurface of $\widetilde{M}$. Then we have the following statements:
\begin{itemize}
    \item [i)] If $\mu=1$ then $\xi$ and $\xi^{\ast}$ are orthogonal.
    \item[ii)] If $\mu=0$ then $g(\xi,\xi^{\ast})=1$. \label{itemii}
\end{itemize}
\end{lemma}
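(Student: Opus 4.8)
The plan is to derive both statements directly from the identity (\ref{eq13}), which has already been established by inserting the decompositions (\ref{eq11}) and (\ref{eq12}) of $FN$ and $F^{\ast}N$ into the compatibility relation (\ref{eq7}). Since (\ref{eq13}) reads $1-\mu^{2}=g(\xi,\xi^{\ast})$ and holds pointwise on $M$, the two items are obtained by specializing the value of the smooth function $\mu$.

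For item (i), I would substitute $\mu=1$ into (\ref{eq13}); the left-hand side becomes $1-1^{2}=0$, so $g(\xi,\xi^{\ast})=0$, which is by definition the statement that the tangential fields $\xi$ and $\xi^{\ast}$ are orthogonal with respect to $g$. For item (ii), I would substitute $\mu=0$ into (\ref{eq13}); the left-hand side becomes $1-0=1$, hence $g(\xi,\xi^{\ast})=1$. One should mention that in both cases the relevant equality is to be read at each point $p\in M$ where the hypothesis on $\mu$ holds, but since the lemma is stated under the global assumptions $\mu\equiv 1$ and $\mu\equiv 0$ respectively, no further care is needed.

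There is essentially no obstacle here: the entire computational content lies in the passage from (\ref{eq7}) together with (\ref{eq11})--(\ref{eq12}) to (\ref{eq13}), which precedes the lemma. The remaining argument is a one-line evaluation in each case, so the proof will be very short; if anything, the only thing worth making explicit is that $\mu_{1}=\mu_{2}$ (forced by (\ref{eq6})) is what allows us to speak of a single function $\mu$ in (\ref{eq13}) and hence in both items of the lemma.
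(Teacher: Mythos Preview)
Your proposal is correct and matches the paper's approach exactly: the authors state the lemma as an immediate consequence of the identity $1-\mu^{2}=g(\xi,\xi^{\ast})$ in (\ref{eq13}), and your two substitutions $\mu=1$ and $\mu=0$ are precisely what is intended.
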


\begin{definition}\label{def4}
Let $(M,g)$ be a hypersurface of $\widetilde{M}$. If $FN$ and $F^{\ast}N$ lie on $\Gamma(TM)$, then $(M,g)$ is called a tangential hypersurface.
\end{definition}

Now, let $(M,g)$ be a tangential hypersurface of  $\widetilde{M}$. Then  we find
\begin{equation}\label{eq14}
FN=\xi \quad and \quad F^{\ast}N=\xi^{\ast}.
\end{equation}
From Lemma \ref{lem2},  we note that $\xi$ is not perpendicular to $\xi^{\ast}$  and $(\xi^{\ast})^{\ast}=\xi$.
 For any $X\in \Gamma(TM)$, we write
\begin{equation} \label{eq15}
FX=\varphi X+\eta^{\ast}(X)N
\end{equation}
and
\begin{equation}\label{eq16}
 F^{\ast}X=\varphi^{\ast} X+\eta(X)N,
\end{equation}
where $\varphi X$, $\varphi^{\ast}X \in \Gamma(TM)$, $\eta$ and $\eta^{\ast}$ are $1-$ forms on $M$. Then we get
\begin{equation} \label{etaaa}
\eta^{\ast}(X)=g(X,\xi^{\ast}), \quad \eta(X)=g(X,\xi)
\end{equation}
and $(\eta^{\ast})^{\ast}=\eta$. Moreover, it is easy to see from Lemma \ref{lem2} that $\eta(\xi^{\ast})=1$ and $\eta^{\ast}(\xi)=1$ hold.

\begin{lemma} \label{lemm:3}
For any tangential hypersurface, we have the following relations:
\begin{equation} \label{eq21} \varphi^{2}X=X-\eta^{\ast}(X)\xi,
\end{equation}
\begin{equation}\label{eq22}
\eta^{\ast}(\varphi X)=0,
\end{equation}
\begin{equation} \label{eq23} (\varphi^{\ast})^{2}X=X-\eta(X)\xi^{\ast},
\end{equation}
\begin{equation} \label{eq24}
\eta(\varphi^{\ast}X)=0,
\end{equation}
\begin{equation} \label{eq25}
\varphi \xi=0 \quad and \quad \varphi^{\ast}\xi^{\ast}=0.
\end{equation}
\end{lemma}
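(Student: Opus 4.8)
The plan is to derive all five identities by the same elementary device: apply $F$ (respectively $F^{\ast}$) a second time to a tangent vector and to the unit normal $N$, then compare tangential and normal components using the decompositions (\ref{eq14})--(\ref{eq16}). The only algebraic inputs required are $F^{2}=I$, $(F^{\ast})^{2}=I$, the formulas $\eta^{\ast}(X)=g(X,\xi^{\ast})$, $\eta(X)=g(X,\xi)$ from (\ref{etaaa}), and the normalizations $\eta^{\ast}(\xi)=\eta(\xi^{\ast})=1$ recorded just before the statement.

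I would first prove (\ref{eq21}) and (\ref{eq22}) together. Applying $F$ to (\ref{eq15}) and then using (\ref{eq15}) again on $\varphi X$ together with (\ref{eq14}) on $N$ gives $F^{2}X=\varphi^{2}X+\eta^{\ast}(\varphi X)N+\eta^{\ast}(X)\xi$. Since $F^{2}X=X$ and the left side $X$ is tangent, comparing tangential parts yields $X=\varphi^{2}X+\eta^{\ast}(X)\xi$, which is (\ref{eq21}), and comparing normal parts yields $\eta^{\ast}(\varphi X)=0$, which is (\ref{eq22}). Replacing $F$ by $F^{\ast}$ throughout and (\ref{eq15}) by (\ref{eq16}) gives $X=(F^{\ast})^{2}X=(\varphi^{\ast})^{2}X+\eta(\varphi^{\ast}X)N+\eta(X)\xi^{\ast}$, so the tangential part is (\ref{eq23}) and the normal part is (\ref{eq24}).

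For (\ref{eq25}) I would apply $F$ to the first relation in (\ref{eq14}): $N=F^{2}N=F\xi=\varphi\xi+\eta^{\ast}(\xi)N=\varphi\xi+N$, hence $\varphi\xi=0$; applying $F^{\ast}$ to the second relation in (\ref{eq14}) and using $\eta(\xi^{\ast})=1$ gives $N=(F^{\ast})^{2}N=F^{\ast}\xi^{\ast}=\varphi^{\ast}\xi^{\ast}+N$, hence $\varphi^{\ast}\xi^{\ast}=0$.

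There is no genuine obstacle; the proof is pure bookkeeping of tangential and normal parts. The only point worth a moment's care is recognizing that the coefficient of $N$ in the expansion of $F(\varphi X)$ is precisely $\eta^{\ast}(\varphi X)$ (and similarly $\eta(\varphi^{\ast}X)$ for $F^{\ast}(\varphi^{\ast}X)$), so that (\ref{eq22}) and (\ref{eq24}) emerge automatically as the normal components alongside (\ref{eq21}) and (\ref{eq23}).
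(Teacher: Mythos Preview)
Your proof is correct; the paper states this lemma without proof, and your argument---applying $F$ (resp.\ $F^{\ast}$) twice and separating tangential and normal components---is exactly the standard and expected derivation.
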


\begin{lemma} \label{lemm:2}
   Let $(M,g)$ be tangential hypersurface of $\widetilde{M}$. Then we have the following relations for any $X,Y \in \Gamma(TM)$:
\begin{align*}
 g(\varphi X,Y)&=g(X,\varphi^{\ast}Y), \\
 g(\varphi X,\varphi^{\ast} Y)&=g(X,Y)-\eta^{\ast}(X)\eta(Y).
\end{align*}
Moreover, $(\varphi^{\ast})^{\ast}=\varphi$ holds.
\end{lemma}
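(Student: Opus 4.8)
The plan is to obtain all three assertions directly from the defining identity $\widetilde{g}(FX,Y)=\widetilde{g}(X,F^{\ast}Y)$ of an almost product-like Riemannian manifold, together with its companion $\widetilde{g}(FX,F^{\ast}Y)=\widetilde{g}(X,Y)$ coming from (\ref{eq7}), by decomposing $FX$ and $F^{\ast}X$ into their tangential and normal parts via (\ref{eq15}) and (\ref{eq16}) and discarding every term that pairs a vector of $\Gamma(TM)$ with the unit normal $N$.

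First I would prove $g(\varphi X,Y)=g(X,\varphi^{\ast}Y)$. Since $Y\in\Gamma(TM)$ we have $\widetilde{g}(N,Y)=0$, so $g(\varphi X,Y)=\widetilde{g}\bigl(\varphi X+\eta^{\ast}(X)N,\,Y\bigr)=\widetilde{g}(FX,Y)$ by (\ref{eq15}). Applying (\ref{eq6}) gives $\widetilde{g}(FX,Y)=\widetilde{g}(X,F^{\ast}Y)$, and since $X$ is tangent we have $\widetilde{g}(X,N)=0$, whence $\widetilde{g}(X,F^{\ast}Y)=\widetilde{g}\bigl(X,\varphi^{\ast}Y+\eta(Y)N\bigr)=g(X,\varphi^{\ast}Y)$ by (\ref{eq16}). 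Chaining these equalities yields the first identity. The relation $(\varphi^{\ast})^{\ast}=\varphi$ is then immediate: the first identity says exactly that $\varphi^{\ast}$ is the $g$-adjoint of $\varphi$, so taking the adjoint once more — equivalently, rereading the identity with the two arguments interchanged and invoking the symmetry of $g$ — returns $\varphi$; alternatively it follows from $(F^{\ast})^{\ast}=F$ together with uniqueness of the tangential component.

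For the second identity I would expand $\widetilde{g}(FX,F^{\ast}Y)$ componentwise using (\ref{eq15}) and (\ref{eq16}):
\begin{align*}
\widetilde{g}(FX,F^{\ast}Y)&=\widetilde{g}\bigl(\varphi X+\eta^{\ast}(X)N,\ \varphi^{\ast}Y+\eta(Y)N\bigr)\\
&=g(\varphi X,\varphi^{\ast}Y)+\eta^{\ast}(X)\eta(Y),
\end{align*}
where the two cross terms vanish because $\varphi X$ and $\varphi^{\ast}Y$ are tangent while $N$ is normal, and the remaining term uses $\widetilde{g}(N,N)=1$. On the other hand (\ref{eq7}) gives $\widetilde{g}(FX,F^{\ast}Y)=\widetilde{g}(X,Y)=g(X,Y)$. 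Equating the two expressions produces $g(\varphi X,\varphi^{\ast}Y)=g(X,Y)-\eta^{\ast}(X)\eta(Y)$, as claimed.

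I do not expect any real obstacle here; the argument is bookkeeping in separating tangential from normal parts. The only points needing attention are that the normal is a \emph{unit} field, so the $NN$-contribution is exactly $\eta^{\ast}(X)\eta(Y)$ rather than some multiple of it, and that every term of the form $\widetilde{g}(\text{tangent},N)$ drops out. One could equivalently phrase the whole computation via the orthogonal projection of $T\widetilde{M}|_M$ onto $TM$, but the component version above is the most transparent.
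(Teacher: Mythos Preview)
Your proof is correct and follows exactly the approach implicit in the paper: the paper states this lemma without explicit proof, but it is the hypersurface specialization of the general submanifold identities $g(fX,Y)=g(X,f^{\ast}Y)$ and $g(fX,f^{\ast}Y)=g(X,Y)-\widetilde{g}(hX,h^{\ast}Y)$ already derived in Section~\ref{product-like} from (\ref{eq6}) and (\ref{eq7}) by the same tangential/normal decomposition you carry out, with $hX=\eta^{\ast}(X)N$ and $h^{\ast}Y=\eta(Y)N$ here. Nothing is missing.
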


\begin{example} \label{exx6}
Let $(\mathbb{R}^{4},\widetilde{g},F)$ be an almost product-like Riemannian manifold of Example \ref{ex1} and $\widetilde{f}$ be an
immersion of Example \ref{exx5}. If we set
\begin{equation*}
 N=e^{-\frac{1}{2}(x_{1}-x_{3}) }\partial_{4},
 \end{equation*}
 then $N$ is an unit normal to $\mathbb{R}^{4}$ and $\xi=e^{-\frac{1}{2}(x_{1}-x_{3}) }\partial_{2}$. We obtain
 \begin{eqnarray*}
 f=\left(
     \begin{array}{cccc}
       0 & 0 & 1 & 0 \\
       0 & 0 & 0 & 0 \\
       1 & 0 & 0 & 0 \\
       0 & 0 & 0 & 0 \\
     \end{array}
   \right), \ h=\left(
                  \begin{array}{cccc}
                    0 & 0 & 0 & 0 \\
                    0 & 0 & 0 & 0 \\
                    0 & 0 & 0 & 0 \\
                    0 & 1 & 0 & 0 \\
                  \end{array}
                \right), \
 \end{eqnarray*}
 \begin{eqnarray*}
t=\left(
    \begin{array}{cccc}
        0 & 0 & 0 & 0 \\
        0 & 0 & 0 & 1 \\
        0 & 0 & 0 & 0 \\
        0 & 0 & 0 & 0 \\
    \end{array}
     \right), \ s=O
 \end{eqnarray*}
 and
 \begin{eqnarray*}
 f^{\ast}=\left(
   \begin{array}{cccc}
     0 & 0 & (1+e^{x_{1}-x_{3}})^{-1} & 0 \\
     0 & 0 & 0 & 0 \\
     1+e^{x_{1}-x_{3}} & 0 & 0 & 0 \\
     0 & 0 & 0 & 0 \\
   \end{array}
 \right), \ \ h^{\ast}=\left(
                         \begin{array}{cccc}
                          0 & 0 & 0 & 0 \\
                           0 & 0 & 0 & 0 \\
                           0 & 0 & 0 & 0 \\
                           0 & 1 & 0 & 0 \\
                         \end{array}
                       \right),
 \end{eqnarray*}
 \begin{eqnarray*}
 t^{\ast}=\left(
            \begin{array}{cccc}
             0 & 0 & 0 & 0  \\
              0 & 0 & 0 & 1  \\
              0 & 0 & 0 & 0  \\
             0 & 0 & 0 & 0  \\
            \end{array}
          \right), \ \ s^{\ast}=O,
 \end{eqnarray*}
 which implies from $s=O$ that the pair $(M,g)$ is a tangential hypersurface. Moreover, we find $\varphi=f$, $\eta^{\ast}(\partial_{1})=0$, $\eta^{\ast}(\partial_{2})=e^{\frac{1}{2}(x_{1}-x_{3}) }$ and $\eta^{\ast}(\partial_{3})=0$.
\end{example}

\section{Tangential hypersurfaces of locally product-like statistical manifolds}
Let $(M,g)$ be a tangential hypersurface of an almost product-like statistical manifold  $\widetilde{M}$. The Gauss and Weingarten formulas
 with respect to $\widetilde{\nabla}$ and $\widetilde{\nabla}^{\ast}$  are given by

\begin{eqnarray*}
\widetilde{\nabla}_{X}Y&=&\nabla_{X}Y+ \ ^{'}\sigma(X,Y)N, \\
\widetilde{\nabla}_{X}N&=&-A_{N}X+\kappa(X)N, \\
 \widetilde{\nabla}^{\ast}_{X}Y&=&\nabla^{\ast}_{X}Y+ \ ^{'}\sigma^{\ast}(X,Y)N,\\
  \widetilde{\nabla}^{\ast}_{X}N&=&-A^{\ast}_{N}X+\kappa^{\ast}(X)N,
\end{eqnarray*}
where $\kappa$ and $\kappa^{\ast}$ are $1-$forms. Then we find $^{'}\sigma(X,Y)=g(A_{N}^{\ast}X,Y)$,
$^{'} \\ \sigma^{\ast}(X,Y)=g(A_{N}X,Y)$, $\kappa(X)+\kappa^{\ast}(X)=0$. Thus, we get
\begin{equation}\label{eq26}
    \widetilde{\nabla}_{X}Y=\nabla_{X}Y+g(A^{\ast}_{N}X,Y)N,
\end{equation}
\begin{equation}\label{eq27}
    \widetilde{\nabla}_{X}N=-A_{N}X+\kappa(X)N,
\end{equation}
\begin{equation}\label{eq28}
    \widetilde{\nabla}^{\ast}_{X}Y=\nabla^{\ast}_{X}Y+g(A_{N} X,Y)N,
\end{equation}
\begin{equation}\label{eq29}
    \widetilde{\nabla}^{\ast}_{X}N=-A^{\ast}_{N}X-\kappa(X)N.
\end{equation}

\begin{proposition} \label{prop:5a}
Let $(M,g)$ be a tangential hypersurface of a locally product-like  statistical manifold $\widetilde{M}$. Then we have the following relations:
\begin{equation}\label{eq32}
\nabla_{X} \xi=-\varphi (A_{N}X)+\kappa(X)\xi,
\end{equation}
\begin{equation}\label{eq34}
\nabla^{\ast}_{X} \xi^{\ast}=-\varphi^{\ast} (A^{\ast}_{N}X)-\kappa(X)\xi^{\ast},
\end{equation}
\begin{equation} \label{eq33ab}
\eta\left( A_{N}^{\ast}X\right)+\eta^{\ast}\left( A_{N}X\right)=0, \quad \quad A_{N}\xi^{\ast}+A_{N}^{\ast}\xi=0,
\end{equation}
\begin{equation}\label{eq33}
\kappa(X)=\eta^{\ast}(\nabla_{X} \xi)=-\eta(\nabla^{\ast}_{X}\xi^{\ast}).
\end{equation}
\end{proposition}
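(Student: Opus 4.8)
The plan is to derive all four relations directly from the defining equations (\ref{eq14})--(\ref{eq16}) by differentiating and using the Gauss--Weingarten formulas (\ref{eq26})--(\ref{eq29}), together with the parallelism $\widetilde{\nabla}F=0$ (and its consequence $\widetilde{\nabla}^{\ast}F^{\ast}=0$ from Lemma \ref{Takano:lemma:2}).

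First I would prove (\ref{eq32}). Since $\xi = FN$ on a tangential hypersurface, I apply $\widetilde{\nabla}_{X}$ to $FN=\xi$ and use $\widetilde{\nabla}F=0$ to get $F(\widetilde{\nabla}_{X}N)=\widetilde{\nabla}_{X}\xi$. Expanding the left side with the Weingarten formula (\ref{eq27}), $F(\widetilde{\nabla}_{X}N)=F(-A_{N}X+\kappa(X)N)=-F(A_{N}X)+\kappa(X)FN$. Now $A_{N}X\in\Gamma(TM)$, so $F(A_{N}X)=\varphi(A_{N}X)+\eta^{\ast}(A_{N}X)N$ by (\ref{eq15}), and $FN=\xi$. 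Expanding the right side with the Gauss formula (\ref{eq26}), $\widetilde{\nabla}_{X}\xi=\nabla_{X}\xi+g(A^{\ast}_{N}X,\xi)N$. Comparing tangential parts gives $\nabla_{X}\xi=-\varphi(A_{N}X)+\kappa(X)\xi$, which is (\ref{eq32}); comparing normal parts gives $-\eta^{\ast}(A_{N}X)=g(A^{\ast}_{N}X,\xi)=\eta(A^{\ast}_{N}X)$, i.e.\ the first equation of (\ref{eq33ab}). The analogous computation applied to $F^{\ast}N=\xi^{\ast}$, using $\widetilde{\nabla}^{\ast}F^{\ast}=0$, the Weingarten formula (\ref{eq29}), the decomposition (\ref{eq16}), and the Gauss formula (\ref{eq28}), yields (\ref{eq34}) and reproduces the same scalar identity $\eta(A^{\ast}_{N}X)+\eta^{\ast}(A_{N}X)=0$, confirming (\ref{eq33ab})'s first half.

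Next I would obtain the second equation of (\ref{eq33ab}), $A_{N}\xi^{\ast}+A^{\ast}_{N}\xi=0$. This follows by polarising the first identity of (\ref{eq33ab}) using the self-adjointness of the shape operators from Corollary \ref{Takano:cor:1}: for any $X$, $g(A^{\ast}_{N}\xi,X)=g(\xi,A^{\ast}_{N}X)=\eta(A^{\ast}_{N}X)=-\eta^{\ast}(A_{N}X)=-g(\xi^{\ast},A_{N}X)=-g(A_{N}\xi^{\ast},X)$ (using $\eta^{\ast}(Z)=g(Z,\xi^{\ast})$ from (\ref{etaaa}) and $g(A_{N}\xi^{\ast},X)=g(\xi^{\ast},A_{N}X)$). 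Since $X$ is arbitrary, $A^{\ast}_{N}\xi=-A_{N}\xi^{\ast}$.

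Finally, for (\ref{eq33}) I would contract (\ref{eq32}) with $\xi^{\ast}$: $\eta^{\ast}(\nabla_{X}\xi)=-\eta^{\ast}(\varphi(A_{N}X))+\kappa(X)\eta^{\ast}(\xi)$. By (\ref{eq22}) the first term vanishes, and $\eta^{\ast}(\xi)=1$ (noted after (\ref{etaaa})), so $\eta^{\ast}(\nabla_{X}\xi)=\kappa(X)$. Likewise contracting (\ref{eq34}) with $\xi$ and using (\ref{eq24}) and $\eta(\xi^{\ast})=1$ gives $\eta(\nabla^{\ast}_{X}\xi^{\ast})=-\kappa(X)$, completing (\ref{eq33}). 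I expect no serious obstacle here; the only care needed is bookkeeping of which connection ($\widetilde{\nabla}$ versus $\widetilde{\nabla}^{\ast}$) and which structure ($F$ versus $F^{\ast}$) pairs with which, and correctly separating tangential and normal components at each step — the sign on $\kappa$ in (\ref{eq29}) versus (\ref{eq27}) is exactly what produces the sign discrepancy between (\ref{eq32}) and (\ref{eq34}).
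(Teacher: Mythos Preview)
Your proof is correct and follows essentially the same route as the paper: both compute $(\widetilde{\nabla}_{X}F)N$ and $(\widetilde{\nabla}^{\ast}_{X}F^{\ast})N$ via the Gauss--Weingarten formulas and the decompositions (\ref{eq15})--(\ref{eq16}), then read off (\ref{eq32}), (\ref{eq34}) and the first identity of (\ref{eq33ab}) from the tangential and normal parts. Your write-up is in fact more complete than the paper's, which stops at those two expansions; you explicitly supply the polarisation argument via Corollary~\ref{Takano:cor:1} for the second identity in (\ref{eq33ab}) and the contractions with $\xi^{\ast}$, $\xi$ (using (\ref{eq22}), (\ref{eq24}) and $\eta^{\ast}(\xi)=\eta(\xi^{\ast})=1$) for (\ref{eq33}).
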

\begin{proof}
Using $FN=\xi$, $F^{\ast}N=\xi^{\ast}$,  (\ref{eq15}) and (\ref{eq28}), we can write
\begin{eqnarray*}
  (\widetilde{\nabla}_{X}F)N&=&\nabla_{X}\xi+\varphi(A_{N}X)-\kappa(X)\xi+\{\eta(A_{N}^{\ast}X)+\eta^{\ast}(A_{N}X)\}N,\\
   (\widetilde{\nabla}^{\ast}_{X}F^{\ast})N&=& \nabla^{\ast}_{X}\xi^{\ast}+\varphi^{\ast}(A^{\ast}_{N}X)-\kappa(X)\xi^{\ast}+\{\eta(A_{N}^{\ast}X)+\eta^{\ast}(A_{N}X)\}N,\\
\end{eqnarray*}
which implies from $\widetilde{\nabla}F=\widetilde{\nabla}^{\ast}F^{\ast}=0$ that  the proof is completed.
\end{proof}
\begin{proposition} \label{prop:8a}
Let $(M,g)$ be a  tangential hypersurface of a locally product-like statistical manifold $\widetilde{M}$.
Then we have
\begin{equation}\label{eq40}
  (\nabla_{X}\varphi)Y=g(A^{\ast}_{N}X,Y)\xi+\eta^{\ast}(Y)A_{N}X,
\end{equation}
\begin{equation}\label{eq42}
(\nabla^{\ast}_{X}\varphi^{\ast})Y=g(A_{N}X,Y)\xi^{\ast}+\eta(Y)A^{\ast}_{N}X.
\end{equation}
\end{proposition}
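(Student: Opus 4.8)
The plan is to differentiate the tangential--normal decomposition $FY=\varphi Y+\eta^{\ast}(Y)N$ along $X$, exploit the hypothesis $\widetilde{\nabla}F=0$ of a locally product-like statistical manifold, and then separate tangential and normal components using the Gauss--Weingarten formulas \eqref{eq26}--\eqref{eq27}.

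First I would compute $\widetilde{\nabla}_{X}(FY)$ in two ways. On one hand, since $\widetilde{\nabla}F=0$ we have $\widetilde{\nabla}_{X}(FY)=F(\widetilde{\nabla}_{X}Y)$, and applying \eqref{eq26} followed by $FN=\xi$ (so $\mu=0$ on a tangential hypersurface) and \eqref{eq15} gives
\[
F(\widetilde{\nabla}_{X}Y)=F\bigl(\nabla_{X}Y+g(A^{\ast}_{N}X,Y)N\bigr)=\varphi(\nabla_{X}Y)+g(A^{\ast}_{N}X,Y)\xi+\eta^{\ast}(\nabla_{X}Y)N .
\]
On the other hand, expanding directly from $FY=\varphi Y+\eta^{\ast}(Y)N$, using \eqref{eq26} on $\varphi Y$ and \eqref{eq27} on $N$,
\[
\widetilde{\nabla}_{X}(FY)=\nabla_{X}(\varphi Y)-\eta^{\ast}(Y)A_{N}X+\bigl\{g(A^{\ast}_{N}X,\varphi Y)+X\eta^{\ast}(Y)+\eta^{\ast}(Y)\kappa(X)\bigr\}N .
\]
Comparing the tangential parts of these two expressions yields $\nabla_{X}(\varphi Y)-\eta^{\ast}(Y)A_{N}X=\varphi(\nabla_{X}Y)+g(A^{\ast}_{N}X,Y)\xi$, i.e.\ $(\nabla_{X}\varphi)Y=g(A^{\ast}_{N}X,Y)\xi+\eta^{\ast}(Y)A_{N}X$, which is \eqref{eq40}; the normal parts merely reproduce a compatibility identity already contained in Proposition \ref{prop:5a} and are not needed.

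For \eqref{eq42} I would run the same argument in dual form: by Lemma \ref{Takano:lemma:2} the triple $(\widetilde{M},\widetilde{g},\widetilde{\nabla}^{\ast},F^{\ast})$ is again locally product-like, so $\widetilde{\nabla}^{\ast}F^{\ast}=0$, and differentiating $F^{\ast}Y=\varphi^{\ast}Y+\eta(Y)N$ along $X$ with \eqref{eq28}--\eqref{eq29} and $F^{\ast}N=\xi^{\ast}$, then comparing tangential parts, gives $(\nabla^{\ast}_{X}\varphi^{\ast})Y=g(A_{N}X,Y)\xi^{\ast}+\eta(Y)A^{\ast}_{N}X$. Equivalently, one may simply apply the $\ast$-operation to \eqref{eq40}, using $(\varphi^{\ast})^{\ast}=\varphi$, $(\eta^{\ast})^{\ast}=\eta$, $(\xi^{\ast})^{\ast}=\xi$ and $(A^{\ast})^{\ast}=A$. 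There is no genuine obstacle here; the only thing requiring attention is the bookkeeping of the $\kappa(X)N$ term in the Weingarten formula and the vanishing of $\mu$ for a tangential hypersurface, and since both affect only normal components the tangential identities come out cleanly.
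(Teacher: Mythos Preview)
Your argument is correct and is essentially the same as the paper's: both compute $(\widetilde{\nabla}_{X}F)Y=\widetilde{\nabla}_{X}(FY)-F(\widetilde{\nabla}_{X}Y)$ via the Gauss--Weingarten formulas \eqref{eq26}--\eqref{eq27} together with $FN=\xi$, take the tangential component, and use $\widetilde{\nabla}F=0$ (and dually $\widetilde{\nabla}^{\ast}F^{\ast}=0$) to conclude. The only cosmetic difference is that the paper writes the tangential part of $(\widetilde{\nabla}_{X}F)Y$ directly as $(\nabla_{X}\varphi)Y-g(A^{\ast}_{N}X,Y)\xi-\eta^{\ast}(Y)A_{N}X$ and then sets it to zero, whereas you equate the two expansions first.
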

\begin{proof}
Using (\ref{eq26}) and (\ref{eq28}), we get
\begin{eqnarray*}
(\widetilde{\nabla}_{X}F)Y=(\nabla_{X}\varphi)Y-g(A_{N}^{\ast}X,Y)\xi-\eta^{\ast}(Y)A_{N}X,
\end{eqnarray*}
\begin{eqnarray*}
(\widetilde{\nabla}^{\ast}_{X}F^{\ast})Y=(\nabla^{\ast}_{X}\varphi^{\ast})Y-g(A_{N}X,Y)\xi^{\ast}-\eta(Y)A^{\ast}_{N}X,
\end{eqnarray*}
which yields from $\widetilde{\nabla}F=\widetilde{\nabla}^{\ast}F^{\ast}=0$ that $(\ref{eq40})$ and $(\ref{eq42})$ hold.
\end{proof}

\begin{example}
Let $(\mathbb{R}^{4},\widetilde{g},F)$ be a locally product-like statistical manifold of Example \ref{exx2}
and $(M,g)$ be a tangential hypersurface of Example \ref{exx6}. By straightforward calculate we find
\begin{eqnarray*}
\nabla_{\partial_{1}}\xi=\nabla_{\partial_{3}}\xi=-\frac{1}{2}(1+2e^{-x_{1}+x_{3}})\xi, \ \
\nabla_{\partial_{2}}\xi=-e^{\frac{1}{2}(x_{1}-x_{3})}(\partial_{2}+\partial_{3})+\xi
\end{eqnarray*}
and
\begin{align*}
(\nabla_{\partial_{1}}\varphi)\partial_{1}&=0, &  (\nabla_{\partial_{2}}\varphi)\partial_{1}&=-2\cosh\frac{1}{2}(x_{1}-x_{3})\xi, &  (\nabla_{\partial_{3}}\varphi)\partial_{1}&=0, \\
(\nabla_{\partial_{1}}\varphi)\partial_{2}&=0, &  (\nabla_{\partial_{2}}\varphi)\partial_{2}&=e^{x_{1}-x_{3}}(\partial_{1}+\partial_{3}), &  (\nabla_{\partial_{3}}\varphi)\partial_{2}&=0, \\
(\nabla_{\partial_{1}}\varphi)\partial_{3}&=0, &  (\nabla_{\partial_{2}}\varphi)\partial_{3}&=-e^{-\frac{1}{2}(x_{1}-x_{3})}\xi, &  (\nabla_{\partial_{3}}\varphi)\partial_{3}&=0.
\end{align*}
Also, we get
\begin{align*}
A_{N}\partial_{1}&=-A^{\ast}_{N}\partial_{1}=(1+e^{-x_{1}+x_{3}})\xi,  & \kappa(\partial_{1})&=-\frac{1}{2}(1+2e^{-x_{1}+x_{3}}),\\
A_{N}\partial_{2}&=-A^{\ast}_{N}\partial_{2}=e^{\frac{1}{2}(x_{1}-x_{3})}(\partial_{1}+\partial_{3})+\xi,  & \kappa(\partial_{2})&=1, \\
A_{N}\partial_{3}&=-A^{\ast}_{N}\partial_{3}=e^{-x_{1}+x_{3}}\xi,  & \kappa(\partial_{3})&=-\frac{1}{2}(1+2e^{x_{1}-x_{3}}),
\end{align*}
which yields that Proposition \ref{prop:5a} and Proposition \ref{prop:8a} hold.
\end{example}

From Proposition \ref{Takano:prop:1e} and Proposition \ref{prop:5a}, we have

\begin{proposition}
Let $(M,g)$ be a tangential hypersurface of a locally product-like statistical manifold $\widetilde{M}$.
Then we obtain
\begin{align*}
R(X,Y)\xi&=-\varphi \left ( (\overline{\nabla}_{X}A)_{N}Y \right )+\varphi \left ( (\overline{\nabla}_{Y}A)_{N}X \right )-\eta^{\ast}(A_{N}Y)A_{N}X\\
&\quad+\eta^{\ast}(A_{N}X)A_{N}Y-\left\{ g([A_{N},A^{\ast}_{N}]X,Y)-(d\kappa)(X,Y) \right\}\xi,\\
R^{\ast}(X,Y)\xi^{\ast}&=-\varphi^{\ast} \left ( (\overline{\nabla}^{\ast}_{X}A^{\ast})_{N}Y \right )+\varphi^{\ast} \left ( (\overline{\nabla}^{\ast}_{Y}A^{\ast})_{N}X \right )-\eta(A^{\ast}_{N}Y)A^{\ast}_{N}X\\
&\quad+\eta(A^{\ast}_{N}X)A^{\ast}_{N}Y+\left\{ g([A_{N},A^{\ast}_{N}]X,Y)-(d\kappa)(X,Y) \right\}\xi^{\ast},\\
\end{align*}
where we put
\begin{align*}
 (\overline{\nabla}_{X}A)_{N}Y&=\nabla_{X}(A_{N}Y)-A_{\widetilde{\nabla}_{X}N}Y-A_{N}(\nabla_{X}Y), \\
 (\nabla_{X}\kappa)(Y)&=X\left\{ \kappa(Y) \right\}-\kappa (\nabla_{X}Y).
\end{align*}
\end{proposition}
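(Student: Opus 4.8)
The plan is to specialize the general submanifold identities in Proposition \ref{Takano:prop:1e} to the codimension-one case, where the normal bundle is spanned by the single unit field $N$ and the structure tensors simplify via $fX=\varphi X$, $hX=\eta^{\ast}(X)N$, $tN=\xi$, $sN=0$, together with $\sigma(X,Y)=g(A^{\ast}_NX,Y)N$. First I would take the first identity of Proposition \ref{Takano:prop:1e} and evaluate it on a triple with $fZ$ replaced by $\varphi Z$; but the cleaner route is to substitute $Z=\xi^{\ast}$ (or to use the third identity with $V=N$, so that $tV=\xi$ and $sV=0$), which directly produces a formula for $R(X,Y)\xi$. Indeed, with $V=N$ the third identity of Proposition \ref{Takano:prop:1e} reads
\begin{equation*}
R(X,Y)\xi - A_{\sigma(Y,\xi)}X + A_{\sigma(X,\xi)}Y - (\nabla_XA)_{sN}Y + (\nabla_YA)_{sN}X
= -f((\nabla_XA)_NY) + f((\nabla_YA)_NX) + t(R^{\perp}(X,Y)N) - t(\sigma(X,A_NY)) + t(\sigma(Y,A_NX)),
\end{equation*}
and since $sN=0$ the two $A_{sN}$ terms drop out.

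Next I would translate each surviving term into hypersurface data. The normal connection $D$ is one-dimensional, $D_XN=\kappa(X)N$, so $R^{\perp}(X,Y)N = \{X\kappa(Y)-Y\kappa(X)-\kappa([X,Y])\}N = (d\kappa)(X,Y)\,N$, whence $t(R^{\perp}(X,Y)N)=(d\kappa)(X,Y)\,\xi$. The mixed terms $t(\sigma(X,A_NY))$ become $g(A^{\ast}_NX,A_NY)\,\xi$, and their antisymmetrization over $X,Y$ gives precisely $-g([A_N,A^{\ast}_N]X,Y)\,\xi$ after using the self-adjointness in Corollary \ref{Takano:cor:1}; combining these two contributions yields the bracketed coefficient $-\{g([A_N,A^{\ast}_N]X,Y)-(d\kappa)(X,Y)\}\xi$. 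For the $f$-terms I would rewrite $(\nabla_XA)_NY$ in the hypersurface normalization as $(\overline{\nabla}_XA)_NY$ (the definition supplied in the statement, using $\widetilde{\nabla}_XN=-A_NX+\kappa(X)N$), and then apply $f=\varphi$ to get $-\varphi((\overline{\nabla}_XA)_NY)+\varphi((\overline{\nabla}_YA)_NX)$. Finally the terms $A_{\sigma(Y,\xi)}X$ expand as $g(A^{\ast}_NY,\xi)A_NX = \eta^{\ast}(A_NY)A_NX$ using \eqref{etaaa} and Corollary \ref{Takano:cor:1} (or \eqref{eq33ab}); moving them to the right-hand side produces $-\eta^{\ast}(A_NY)A_NX+\eta^{\ast}(A_NX)A_NY$. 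Assembling everything gives the asserted formula for $R(X,Y)\xi$.

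The formula for $R^{\ast}(X,Y)\xi^{\ast}$ follows by the duality principle already invoked repeatedly in the paper: Lemma \ref{Takano:lemma:2} says $(\widetilde{M},\widetilde{g},\widetilde{\nabla}^{\ast},F^{\ast})$ is again locally product-like, and Corollary \ref{Takano:cor:2} is the starred version of Proposition \ref{Takano:prop:1e}; running the identical computation with all objects starred, and noting that $\widetilde{\nabla}^{\ast}_XN=-A^{\ast}_NX-\kappa(X)N$ flips the sign of $\kappa$ (hence of $d\kappa$) while $[A^{\ast}_N,A_N]=-[A_N,A^{\ast}_N]$ flips the bracket sign too, so that the net coefficient of $\xi^{\ast}$ is $+\{g([A_N,A^{\ast}_N]X,Y)-(d\kappa)(X,Y)\}$, exactly as stated. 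The main obstacle is purely bookkeeping: one must be scrupulous about which connection ($\widetilde{\nabla}$ versus $\widetilde{\nabla}^{\ast}$) appears in $\sigma$ versus in the covariant derivative of $A$, and about the sign conventions in $\widetilde{\nabla}_XN$ versus $\widetilde{\nabla}^{\ast}_XN$, since a single misplaced sign corrupts the $\kappa$-term; there is no conceptual difficulty beyond carefully collecting terms.
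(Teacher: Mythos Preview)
Your approach is correct and is exactly the one the paper indicates: specialize the third identity of Proposition~\ref{Takano:prop:1e} to $V=N$ (so $tN=\xi$, $sN=0$) and rewrite each term using the hypersurface relations of Proposition~\ref{prop:5a}, then invoke duality for the starred formula. One small slip: in your expansion of $A_{\sigma(Y,\xi)}X$ you wrote $g(A^{\ast}_NY,\xi)=\eta^{\ast}(A_NY)$, whereas in fact $g(A^{\ast}_NY,\xi)=\eta(A^{\ast}_NY)=-\eta^{\ast}(A_NY)$ by \eqref{eq33ab}; this missing sign is exactly what makes your stated final contribution $-\eta^{\ast}(A_NY)A_NX+\eta^{\ast}(A_NX)A_NY$ come out right, so the argument is sound once that line is corrected.
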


\begin{proposition}
Let $(M,g)$ be a tangential hypersurface of a locally product-like statistical manifold $\widetilde{M}$.
Then we obtain
\begin{align*}
\widetilde{R}(X,Y)Z&=R(X,Y)Z-g(A^{\ast}_{N}Y,Z)A_{N}X+g(A^{\ast}_{N}X,Z)A_{N}Y\\
 &\quad +\{g( (\overline{\nabla}^{\ast}_{X}A^{\ast})_{N}Y,Z)-g( (\overline{\nabla}^{\ast}_{Y}A^{\ast})_{N}X,Z)\}N,\\
 \widetilde{R}(X,Y)N&=-(\overline{\nabla}_{X}A)_{N}Y+(\overline{\nabla}_{Y}A)_{N}X -\left\{ g([A_{N},A^{\ast}_{N}]X,Y)\right.\\
 &\quad \left.-(d\kappa)(X,Y) \right\}N,\\
 \widetilde{R}^{\ast}(X,Y)Z&=R^{\ast}(X,Y)Z-g(A_{N}Y,Z)A^{\ast}_{N}X+g(A_{N}X,Z)A^{\ast}_{N}Y\\
 &\quad +\{g( (\overline{\nabla}_{X}A)_{N}Y,Z)-g( (\overline{\nabla}_{Y}A)_{N}X,Z)\}N,\\
 \widetilde{R}^{\ast}(X,Y)N&=-(\overline{\nabla}^{\ast}_{X}A^{\ast})_{N}Y+(\overline{\nabla}^{\ast}_{Y}A^{\ast})_{N}X\\
 &\quad +\left\{ g([A_{N},A^{\ast}_{N}]X,Y)-(d\kappa)(X,Y) \right\}N.
\end{align*}
\end{proposition}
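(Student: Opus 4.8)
The four identities are the tangential and normal components of $\widetilde{R}(X,Y)Z$ and $\widetilde{R}(X,Y)N$, so the plan is to specialize the Gauss, Codazzi and Ricci equations relative to $\widetilde{\nabla}$ already recorded in Section \ref{product-like} to the present hypersurface, where $\sigma(X,Y)=g(A^{\ast}_{N}X,Y)N$, $\sigma^{\ast}(X,Y)=g(A_{N}X,Y)N$, $D_{X}N=\kappa(X)N$, $D^{\ast}_{X}N=-\kappa(X)N$, and $A_{V},A^{\ast}_{V}$ reduce to $A_{N},A^{\ast}_{N}$; equivalently one may expand $\widetilde{R}(X,Y)W=\widetilde{\nabla}_{X}\widetilde{\nabla}_{Y}W-\widetilde{\nabla}_{Y}\widetilde{\nabla}_{X}W-\widetilde{\nabla}_{[X,Y]}W$ directly with (\ref{eq26}) and (\ref{eq27}) and sort the output. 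The $\widetilde{R}^{\ast}$ statements then come from the identical computation after the substitution $(\widetilde{\nabla},A_{N},\kappa)\mapsto(\widetilde{\nabla}^{\ast},A^{\ast}_{N},-\kappa)$ allowed by (\ref{eq28})--(\ref{eq29}), i.e. by the duality used throughout this paper together with $(\widetilde{\nabla}^{\ast})^{\ast}=\widetilde{\nabla}$ and $(A^{\ast})^{\ast}=A$.

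First take $X,Y,Z$ tangent to $M$. Inserting (\ref{eq26}) into $\widetilde{\nabla}_{X}(\widetilde{\nabla}_{Y}Z)$ and resolving the resulting $N$-term by (\ref{eq27}), the part tangent to $M$ collapses --- using that $\nabla$ is torsion-free --- to $R(X,Y)Z-g(A^{\ast}_{N}Y,Z)A_{N}X+g(A^{\ast}_{N}X,Z)A_{N}Y$, the tangential block of the first identity. For the part along $N$ I would expand $X\{g(A^{\ast}_{N}Y,Z)\}$ by the duality relation $X\,g(U,W)=g(\nabla_{X}U,W)+g(U,\nabla^{\ast}_{X}W)$ valid on $M$, use the $g$-self-adjointness of $A^{\ast}_{N}$ (Corollary \ref{Takano:cor:1}), and compare with $g((\overline{\nabla}^{\ast}_{X}A^{\ast})_{N}Y,Z)$, where $(\overline{\nabla}^{\ast}_{X}A^{\ast})_{N}Y=\nabla^{\ast}_{X}(A^{\ast}_{N}Y)-A^{\ast}_{\widetilde{\nabla}^{\ast}_{X}N}Y-A^{\ast}_{N}(\nabla^{\ast}_{X}Y)$ and only the normal part $-\kappa(X)N$ of $\widetilde{\nabla}^{\ast}_{X}N$ (by (\ref{eq29})) enters $A^{\ast}_{\widetilde{\nabla}^{\ast}_{X}N}$. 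After antisymmetrizing in $X,Y$, the places where $\nabla$ and $\nabla^{\ast}$ might disagree combine into a term of the form $g\big(A^{\ast}_{N}(K(X,Y)-K(Y,X)),Z\big)$ with $K(X,Y)=\nabla_{X}Y-\nabla^{\ast}_{X}Y$; since both induced connections are torsion-free, $K$ is symmetric, so this vanishes and the normal part is exactly $\{g((\overline{\nabla}^{\ast}_{X}A^{\ast})_{N}Y,Z)-g((\overline{\nabla}^{\ast}_{Y}A^{\ast})_{N}X,Z)\}N$.

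Now take $W=N$. Expanding $\widetilde{\nabla}_{X}\widetilde{\nabla}_{Y}N=\widetilde{\nabla}_{X}(-A_{N}Y+\kappa(Y)N)$ with (\ref{eq26}) and (\ref{eq27}), writing $[X,Y]=\nabla_{X}Y-\nabla_{Y}X$ and using $A_{\widetilde{\nabla}_{X}N}Y=\kappa(X)A_{N}Y$ (only the normal part of $\widetilde{\nabla}_{X}N$ is seen by $A$), the part tangent to $M$ assembles into $-(\overline{\nabla}_{X}A)_{N}Y+(\overline{\nabla}_{Y}A)_{N}X$. The part along $N$ comes out as $\{-g(A^{\ast}_{N}X,A_{N}Y)+g(A^{\ast}_{N}Y,A_{N}X)+X\{\kappa(Y)\}-Y\{\kappa(X)\}-\kappa([X,Y])\}N$; the three $\kappa$-terms give $(d\kappa)(X,Y)$, and, moving $A^{\ast}_{N}$ and $A_{N}$ across $g$ by Corollary \ref{Takano:cor:1}, $g(A^{\ast}_{N}Y,A_{N}X)-g(A^{\ast}_{N}X,A_{N}Y)=g(X,[A_{N},A^{\ast}_{N}]Y)=-g([A_{N},A^{\ast}_{N}]X,Y)$ since the commutator of two $g$-self-adjoint operators is $g$-skew. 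This gives $-\{g([A_{N},A^{\ast}_{N}]X,Y)-(d\kappa)(X,Y)\}N$ and finishes the second identity (alternatively this normal part is the Ricci equation of Proposition \ref{Takano:prop:1c} with $U=V=N$, using $R^{\perp}(X,Y)N=(d\kappa)(X,Y)N$).

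I expect the main obstacle to be bookkeeping rather than anything conceptual: keeping straight that the normal component of $\widetilde{\nabla}$ carries $A^{\ast}_{N}$ while its Weingarten map carries $A_{N}$ --- so the Gauss term in $\widetilde{R}(X,Y)Z$ genuinely mixes the two shape operators and the Codazzi normal term must be paired with $\overline{\nabla}^{\ast}A^{\ast}$, not $\overline{\nabla}A$ --- together with remembering that $A_{V}$ is defined only for $V$ normal, and that $\nabla$ need not coincide with $\nabla^{\ast}$, so the duality (\ref{eq8}) on $M$ has to be invoked at the one step where it matters. Once these conventions are fixed the derivation is mechanical; the only genuine inputs are the $\nabla$--$\nabla^{\ast}$ duality, the $g$-self-adjointness of $A_{N},A^{\ast}_{N}$, the $g$-skewness of $[A_{N},A^{\ast}_{N}]$, and $(d\kappa)(X,Y)=X\{\kappa(Y)\}-Y\{\kappa(X)\}-\kappa([X,Y])$.
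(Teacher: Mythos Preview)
Your proposal is correct and is exactly the approach the paper has in mind: the proposition is stated there without proof, being the direct specialization of the general Gauss, Codazzi and Ricci identities (Propositions \ref{Takano:prop:1a}--\ref{Takano:prop:1d}) to the hypersurface data $\sigma(X,Y)=g(A^{\ast}_{N}X,Y)N$, $D_{X}N=\kappa(X)N$, together with the observation that $(\nabla_{X}A)_{N}Y=(\overline{\nabla}_{X}A)_{N}Y$ since $D_{X}N$ and the normal part of $\widetilde{\nabla}_{X}N$ coincide. Your careful check that the $\nabla$ versus $\nabla^{\ast}$ discrepancy in the Codazzi term cancels after antisymmetrization (via the symmetry of $K(X,Y)=\nabla_{X}Y-\nabla^{\ast}_{X}Y$) is the only point requiring thought, and you handle it correctly.
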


From $\widetilde{R}(X,Y)FZ=F(\widetilde{R}(X,Y)Z)$ and $\widetilde{R}^{\ast}(X,Y)F^{\ast}Z=F^{\ast}(\widetilde{R}^{\ast}(X,Y)Z)$, we get

\begin{proposition} \label{prop.new:Takano}
Let $(M,g)$ be a tangential hypersurface of a locally product-like statistical manifold $\widetilde{M}$.
Then we obtain
\begin{align*}
&R(X,Y)\varphi Z+g(A^{\ast}_{N}X,\varphi Z)A_{N}Y-g(A^{\ast}_{N}Y,\varphi Z)A_{N}X\\
&-\eta^{\ast}(Z) \left\{ (\overline{\nabla}_{X}A)_{N}Y-(\overline{\nabla}_{Y}A)_{N}X \right\}=\varphi (R(X,Y)Z)-g(A^{\ast}_{N}Y,Z)\varphi (A_{N}X)\\
&+g(A^{\ast}_{N}X,Z)\varphi (A_{N}Y)+\{g((\overline{\nabla}^{\ast}_{X}A^{\ast})_{N}Y,Z)-g((\overline{\nabla}^{\ast}_{Y}A^{\ast})_{N}X,Z) \}\xi,
\end{align*}
and
\begin{align*}
&R^{\ast}(X,Y)\varphi^{\ast}Z+g(A_{N}X,\varphi^{\ast} Z)A^{\ast}_{N}Y-g(A_{N}Y,\varphi^{\ast} Z)A^{\ast}_{N}X\\
&-\eta(Z) \left\{ (\overline{\nabla}^{\ast}_{X}A^{\ast})_{N}Y-(\overline{\nabla}^{\ast}_{Y}A^{\ast})_{N}X \right\}=\varphi^{\ast} (R^{\ast}(X,Y)Z)+\left\{g((\overline{\nabla}_{X}A)_{N}Y,Z)\right.\\
&\left.-g((\overline{\nabla}_{Y}A)_{N}X,Z) \right\}\xi^{\ast}-g(A_{N}Y,Z)\varphi^{\ast}(A^{\ast}_{N}X)+g(A_{N}X,Z)\varphi^{\ast} (A^{\ast}_{N}Y).
\end{align*}
\end{proposition}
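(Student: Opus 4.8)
The plan is to use the parallelism of $F$ and $F^{\ast}$ with respect to $\widetilde{\nabla}$ and $\widetilde{\nabla}^{\ast}$: since $\widetilde{\nabla}F=0$ on a locally product-like statistical manifold, the curvature operator $\widetilde{R}(X,Y)$ commutes with $F$, which is precisely the identity $\widetilde{R}(X,Y)FZ=F(\widetilde{R}(X,Y)Z)$ quoted just before the statement, and likewise $\widetilde{R}^{\ast}(X,Y)F^{\ast}Z=F^{\ast}(\widetilde{R}^{\ast}(X,Y)Z)$ for the conjugate objects. The idea is then to decompose both sides of each identity into tangential and normal components relative to $M$, using (\ref{eq14})--(\ref{eq16}) together with the hypersurface curvature formulas established in the preceding proposition, and to read off the tangential part.

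First I would expand the left-hand side. For $Z$ tangent to $M$, (\ref{eq15}) gives $FZ=\varphi Z+\eta^{\ast}(Z)N$, hence
\[
\widetilde{R}(X,Y)FZ=\widetilde{R}(X,Y)\varphi Z+\eta^{\ast}(Z)\,\widetilde{R}(X,Y)N .
\]
Into $\widetilde{R}(X,Y)\varphi Z$ I substitute the first formula of the previous proposition with $\varphi Z$ in the role of $Z$, and into $\widetilde{R}(X,Y)N$ the second formula of that proposition; collecting tangential terms produces $R(X,Y)\varphi Z$, the shape-operator contributions $g(A^{\ast}_N X,\varphi Z)A_N Y-g(A^{\ast}_N Y,\varphi Z)A_N X$, and the Codazzi-type contribution $-\eta^{\ast}(Z)\{(\overline{\nabla}_X A)_N Y-(\overline{\nabla}_Y A)_N X\}$, plus a normal part which I set aside.

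Next I would expand the right-hand side $F(\widetilde{R}(X,Y)Z)$. Writing $\widetilde{R}(X,Y)Z$ as its tangential part $R(X,Y)Z-g(A^{\ast}_N Y,Z)A_N X+g(A^{\ast}_N X,Z)A_N Y$ plus the normal part $\{g((\overline{\nabla}^{\ast}_X A^{\ast})_N Y,Z)-g((\overline{\nabla}^{\ast}_Y A^{\ast})_N X,Z)\}N$ (again from the previous proposition), and applying $F$ -- which acts on a tangent vector $W$ by $FW=\varphi W+\eta^{\ast}(W)N$ and sends $N$ to $\xi$ by (\ref{eq14}) -- yields the tangential terms
\[
\varphi(R(X,Y)Z)-g(A^{\ast}_N Y,Z)\varphi(A_N X)+g(A^{\ast}_N X,Z)\varphi(A_N Y)+\{g((\overline{\nabla}^{\ast}_X A^{\ast})_N Y,Z)-g((\overline{\nabla}^{\ast}_Y A^{\ast})_N X,Z)\}\xi .
\]
Equating the tangential components of the two expansions gives the first claimed identity; the second one follows in exactly the same way from $\widetilde{R}^{\ast}(X,Y)F^{\ast}Z=F^{\ast}(\widetilde{R}^{\ast}(X,Y)Z)$, now using (\ref{eq16}), $F^{\ast}N=\xi^{\ast}$, and the $\ast$-versions of the hypersurface curvature formulas.

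The whole argument is a matter of careful bookkeeping once the decompositions are inserted; the one place that needs attention is keeping the $\eta^{\ast}(Z)$-weighted Codazzi terms built from $A$ (which arise from $\eta^{\ast}(Z)\widetilde{R}(X,Y)N$ on the left) separate from the $\xi$-weighted Codazzi terms built from $A^{\ast}$ (which arise from applying $F$ to the normal part of $\widetilde{R}(X,Y)Z$ on the right), since these live on opposite sides of the final equation and involve different connections. Comparing the remaining purely normal components simply reproduces the relevant relation of the previous proposition and serves only as a consistency check, contributing nothing new to the stated tangential identities.
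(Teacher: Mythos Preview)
Your proposal is correct and follows exactly the approach the paper indicates: the paper itself offers no proof beyond the line ``From $\widetilde{R}(X,Y)FZ=F(\widetilde{R}(X,Y)Z)$ and $\widetilde{R}^{\ast}(X,Y)F^{\ast}Z=F^{\ast}(\widetilde{R}^{\ast}(X,Y)Z)$, we get'', and your expansion via (\ref{eq14})--(\ref{eq16}) and the hypersurface curvature formulas of the preceding proposition, followed by comparison of tangential parts, is precisely what is intended.
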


From $\widetilde{R}(X,Y)FN=F(\widetilde{R}(X,Y)N)$,  $\widetilde{R}^{\ast}(X,Y)F^{\ast}N=F^{\ast}(\widetilde{R}^{\ast}(X,Y)N)$, we get
\begin{proposition}
Let $(M,g)$ be a tangential hypersurface of a locally product-like statistical manifold $\widetilde{M}$.
Then we obtain
\begin{eqnarray*}
   \eta((\overline{\nabla}^{\ast}_{X}A^{\ast})_{N}Y)-\eta ((\overline{\nabla}^{\ast}_{Y}A^{\ast})_{N}X)=-\eta^{\ast}((\overline{\nabla}_{X}A)_{N}Y)+\eta^{\ast}((\overline{\nabla}_{Y}A)_{N}X).
\end{eqnarray*}
\end{proposition}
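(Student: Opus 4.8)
The plan is to start from the identity $\widetilde{R}(X,Y)FN=F(\widetilde{R}(X,Y)N)$, which holds because $F$ is parallel with respect to $\widetilde{\nabla}$ in a locally product-like statistical manifold. Since $(M,g)$ is a tangential hypersurface we have $FN=\xi\in\Gamma(TM)$ by (\ref{eq14}), so the left-hand side is $\widetilde{R}(X,Y)\xi$, which we can decompose into tangential and normal parts using the expression for $\widetilde{R}(X,Y)Z$ with $Z=\xi$ taken from the preceding proposition. The right-hand side is $F$ applied to $\widetilde{R}(X,Y)N$, and again the preceding proposition gives $\widetilde{R}(X,Y)N$ explicitly as a tangential part $-(\overline{\nabla}_{X}A)_{N}Y+(\overline{\nabla}_{Y}A)_{N}X$ plus a normal multiple of $N$; applying $F$ and using $FN=\xi$ together with (\ref{eq15}) to split $F$ of the tangential part, I would then read off the $N$-component of both sides.

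First I would write out, for the left side, the normal component of $\widetilde{R}(X,Y)\xi$. From the formula $\widetilde{R}(X,Y)Z=R(X,Y)Z-g(A^{\ast}_{N}Y,Z)A_{N}X+g(A^{\ast}_{N}X,Z)A_{N}Y+\{g((\overline{\nabla}^{\ast}_{X}A^{\ast})_{N}Y,Z)-g((\overline{\nabla}^{\ast}_{Y}A^{\ast})_{N}X,Z)\}N$, setting $Z=\xi$ and noting that $R(X,Y)\xi$ and the $A_N$ terms are tangential, the $N$-component is $g((\overline{\nabla}^{\ast}_{X}A^{\ast})_{N}Y,\xi)-g((\overline{\nabla}^{\ast}_{Y}A^{\ast})_{N}X,\xi)$, which by (\ref{etaaa}) equals $\eta((\overline{\nabla}^{\ast}_{X}A^{\ast})_{N}Y)-\eta((\overline{\nabla}^{\ast}_{Y}A^{\ast})_{N}X)$ — exactly the left side of the claimed identity. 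Second, for the right side I would compute the $N$-component of $F(\widetilde{R}(X,Y)N)$. Writing $\widetilde{R}(X,Y)N=W+cN$ with $W=-(\overline{\nabla}_{X}A)_{N}Y+(\overline{\nabla}_{Y}A)_{N}X$ tangent and $c$ the scalar $-\{g([A_{N},A^{\ast}_{N}]X,Y)-(d\kappa)(X,Y)\}$, we get $F(\widetilde{R}(X,Y)N)=FW+cFN=\varphi W+\eta^{\ast}(W)N+c\xi$, so its $N$-component is $\eta^{\ast}(W)=\eta^{\ast}(-(\overline{\nabla}_{X}A)_{N}Y+(\overline{\nabla}_{Y}A)_{N}X)$, which is the right side of the claimed identity. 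Equating the two $N$-components finishes the proof.

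The routine but slightly delicate part will be making sure the various curvature decompositions are applied consistently — in particular that $R(X,Y)\xi$ really is tangent (it is, since $R$ is the induced connection's curvature on $M$), and that the scalar $c$ contributes only to the $\xi$-component (tangential) on the right and hence drops out of the $N$-component comparison. Since every ingredient — the parallelism of $F$, the Gauss–Codazzi–Ricci-type formulas of the previous proposition, and the splittings (\ref{eq14}), (\ref{eq15}), (\ref{etaaa}) — is already available, there is no real obstacle; the entire content is bookkeeping of which terms land in $\Gamma(TM)$ versus along $N$. I would close by remarking that the same computation applied to $\widetilde{R}^{\ast}(X,Y)F^{\ast}N=F^{\ast}(\widetilde{R}^{\ast}(X,Y)N)$ with the starred quantities yields the dual identity, which by $(\eta^{\ast})^{\ast}=\eta$ and $(A^{\ast})^{\ast}=A$ is in fact equivalent to the stated one.
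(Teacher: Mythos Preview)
Your proof is correct and follows exactly the route indicated by the paper: the proposition is stated immediately after the line ``From $\widetilde{R}(X,Y)FN=F(\widetilde{R}(X,Y)N)$, $\widetilde{R}^{\ast}(X,Y)F^{\ast}N=F^{\ast}(\widetilde{R}^{\ast}(X,Y)N)$, we get,'' and you have filled in precisely the normal-component comparison that makes this work, including the observation that the starred identity is redundant.
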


Next, we discuss the tangential hypersurface $(M,g)$ of an almost product-like statistical manifold
satisfying (\ref{o}). Then we find from Proposition \ref{prop.new:Takano}
\begin{eqnarray}
R(X,Y)\varphi Z &=& \widetilde{c}\left[ g(Y,Z)X-g(X,Z)Y+g(Y,\varphi Z)\varphi X-g(X,\varphi Z)\varphi Y \right. \nonumber \\
&&\left.+\{g(\varphi X,Y)-g(X,\varphi Y) \}\varphi Z\right]+g(A^{\ast}_{N}Y,Z)A_{N}X \nonumber \\
&&-g(A^{\ast}_{N}X,Z)A_{N}Y, \label{Tk:1}
\end{eqnarray}
\begin{align}
g((\overline{\nabla}^{\ast}_{X}A^{\ast})_{N}Y,Z)-g((\overline{\nabla}^{\ast}_{Y}A^{\ast})_{N}X,Z)&=\widetilde{c}[\eta^{\ast}(X)g(Y,\varphi Z)-\eta^{\ast}(Y)g(X,\varphi Z) \nonumber \\
&\quad+\eta^{\ast}(Z)\{g(\varphi X,Y)-g(X,\varphi Y)\}], \label{Tk:2}
\end{align}
\begin{align}
(\overline{\nabla}_{X}A)_{N}Y-(\overline{\nabla}_{Y}A)_{N}X&=\widetilde{c}\left[\eta(X)\varphi Y-\eta(Y)\varphi X\right. \nonumber \\
&\quad\left. +\{g(X,\varphi Y)-g(\varphi X,Y)\}\xi\right], \label{Tk:3}
\end{align}
\begin{eqnarray}
g([A_{N},A_{N}^{\ast}]X,Y)-(d \kappa)(X,Y)=\widetilde{c}\{\eta(X)\eta^{\ast}(Y)-\eta(Y)\eta^{\ast}(X)\}. \label{Tk:4}
\end{eqnarray}
If $M$ is totally geodesic with respect to $\widetilde{\nabla}$, that is, $'\sigma=0$, then we get from  (\ref{Tk:3})
\begin{eqnarray*}
 \widetilde{c}[\eta^{\ast}(X)g(Y,\varphi Z)-\eta^{\ast}(Y)g(X,\varphi Z)+\eta^{\ast}(Z)\{g(\varphi X,Y)-g(X,\varphi Y)\}]=0,
\end{eqnarray*}
which denotes that $\widetilde{c}=0$  or
\begin{equation}
\eta^{\ast}(X)g(Y,\varphi Z)-\eta^{\ast}(Y)g(X,\varphi Z)+\eta^{\ast}(Z)\{g(\varphi X,Y)-g(X,\varphi Y)\}=0.\label{Tk:5}
\end{equation}
Therefore we get $g(\varphi X,Y)=g(X,\varphi Y)$, namely, $\varphi^{\ast}=\varphi$. It is east to see from (\ref{Tk:5}) that
$\eta^{\ast}(X)\varphi Z=g(X,\varphi Z)\xi^{\ast}$ holds. Thus $X=\eta^{\ast}(X)\xi=\eta(X)\xi^{\ast}$ holds. Because of any vector
field $X$ tangent to $M$ is parallel to $\xi$ and $\xi^{\ast}$, we can not select to the $n$th linearly independence vector
fields, where $n=\textrm{dim}M(>2)$. Hence we have
\begin{theorem}
Let $(M,g)$ be the tangential hypersurface of the locally product-like statistical manifold $\widetilde{M}$
satisfying the condition (\ref{o}). If $M$ is totally geodesic with respect to $\widetilde{\nabla}$, then
$\widetilde{M}$ is flat.
\end{theorem}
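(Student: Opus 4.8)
The plan is to exploit the four consequences (\ref{Tk:1})--(\ref{Tk:4}) of the curvature condition (\ref{o}) specialised to a tangential hypersurface, together with the hypothesis $'\sigma=0$ (equivalently $A_N^{\ast}=0$ by Lemma~\ref{Takano:lemma:4}), and then derive a contradiction with $\dim M>2$ unless $\widetilde{c}=0$. First I would insert the totally geodesic hypothesis into (\ref{Tk:2}): since $A_N^{\ast}=0$ forces $(\overline{\nabla}^{\ast}_XA^{\ast})_N=0$, the left-hand side vanishes and we obtain exactly the scalar identity
\begin{equation*}
\widetilde{c}\bigl[\eta^{\ast}(X)g(Y,\varphi Z)-\eta^{\ast}(Y)g(X,\varphi Z)+\eta^{\ast}(Z)\{g(\varphi X,Y)-g(X,\varphi Y)\}\bigr]=0
\end{equation*}
for all tangent $X,Y,Z$. (The excerpt's reference to (\ref{Tk:3}) at that point is a typo for (\ref{Tk:2}); I would cite (\ref{Tk:2}).)

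Assuming $\widetilde{c}\neq0$, I would then extract structural information from the bracketed identity (\ref{Tk:5}). Setting $X=Z=\xi^{\ast}$ and using $\eta^{\ast}(\xi^{\ast})=g(\xi^{\ast},\xi^{\ast})$ together with $\varphi\xi=0$, $\varphi^{\ast}\xi^{\ast}=0$ and Lemma~\ref{lemm:2}, one isolates the antisymmetric part $g(\varphi X,Y)-g(X,\varphi Y)$; a short computation gives $g(\varphi X,Y)=g(X,\varphi Y)$, i.e.\ $\varphi^{\ast}=\varphi$. Feeding $\varphi^{\ast}=\varphi$ back into (\ref{Tk:5}) kills the last term and leaves $\eta^{\ast}(X)g(Y,\varphi Z)=\eta^{\ast}(Y)g(X,\varphi Z)$; replacing $Z$ by $\varphi^{\ast}Z=\varphi Z$ and using (\ref{eq21}) turns this into $\eta^{\ast}(X)\varphi Z=g(X,\varphi Z)\xi^{\ast}$ after contracting, and then applying $\varphi$ once more and (\ref{etaaa}) yields that every tangent vector $X$ satisfies $X=\eta^{\ast}(X)\xi=\eta(X)\xi^{\ast}$, so the whole tangent space is spanned by the single vector $\xi$ (equivalently $\xi^{\ast}$).

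This is impossible when $n=\dim M>2$, since one cannot choose $n$ linearly independent tangent vectors all proportional to $\xi$. Hence $\widetilde{c}=0$, and by the definition of constant curvature and the remark after (\ref{eq8}) this means $\widetilde{R}\equiv0$, i.e.\ $\widetilde{M}$ is flat.

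I expect the only genuinely delicate step to be the passage from the bracketed identity (\ref{Tk:5}) to the conclusion that $\operatorname{span}\{\xi\}$ exhausts $T_pM$: one must be careful about the possible degeneracy of $\eta^{\ast}$ on $\ker\varphi$ and about the fact that $g(\xi,\xi^{\ast})$ need not equal $1$ (Lemma~\ref{lem2}), so the contractions have to be organised so as not to divide by a quantity that might vanish; choosing the test vectors adapted to the eigenspace decomposition of $\varphi^2$ in (\ref{eq21}) circumvents this. Everything else is bookkeeping with the algebraic identities of Lemma~\ref{lemm:3} and Lemma~\ref{lemm:2}.
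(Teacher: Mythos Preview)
Your proposal is correct and follows essentially the same route as the paper: vanish the left side of (\ref{Tk:2}) using $A_N^{\ast}=0$, assume $\widetilde c\neq0$ to get (\ref{Tk:5}), deduce $\varphi^{\ast}=\varphi$, then $\eta^{\ast}(X)\varphi Z=g(X,\varphi Z)\xi^{\ast}$, hence $X=\eta^{\ast}(X)\xi$, contradicting $\dim M>2$. One small simplification: putting $Z=\xi$ in (\ref{Tk:5}) gives $g(\varphi X,Y)=g(X,\varphi Y)$ immediately (since $\varphi\xi=0$ and $\eta^{\ast}(\xi)=1$), and your worry about $g(\xi,\xi^{\ast})$ is unnecessary here because for a tangential hypersurface $\mu=0$ forces $g(\xi,\xi^{\ast})=1$ by Lemma~\ref{lem2}.
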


\section{Conclusions and future works}

The theory of hypersurfaces of Riemannian manifolds admitting various differentiable structures includes comprehensive geometric properties.
For example, every hypersurface of an almost complex space form possesses a contact structure (cf. \cite{Adachi,Chen:2,Kon}), every hypersurface of an almost contact space form possesses a natural $f$-structure (cf.\cite{Blair,Eum}) and every hypersurface of almost product Riemannian manifolds admits a para contact structure under some conditions (cf. \cite{Adati,Deshmukh}). With the help of these features,
each hypersurface of an almost complex space form becomes a contact metric manifold, each hypersurface of an almost contact space form becomes a metric $f$-manifold and some special hypersurfaces become a para contact manifold. Thus, the basic relations and properties of contact metric manifolds,
metric $f$-manifolds, and para contact metric manifolds can be constructed by examining the basic properties of the geometry of these hypersurfaces. In this study, as a result of examining tangential hypersurfaces of almost product-like Riemannian manifolds, para contact-like structures have coincided.

\medskip

Para contact manifolds firstly defined by K. Sato and K. K. Matsumoto in \cite{Sato} as follows:

\medskip

Let $(\overline{M},\overline{g})$ be a $m$-dimensional Riemannian manifold, $\overline{\varphi}$ be a $(1,1)$ tensor field,
$\overline{\xi}$ be a tangent vector field and $\overline{\eta}$ be a $1$-form on $M$. Then $(\overline{M},\overline{g},\overline{\varphi},\overline{\xi},\overline{\eta})$
is called an almost para contact Riemannian manifold if the following relations are satisfied for any $X,Y\in \Gamma(T\overline{M})$:
\begin{eqnarray*}
 \overline{\varphi}^{2}X=X-\overline{\eta}(X)\overline{\xi}, \ \ \ \overline{\eta}(\overline{\xi})=1, \ \ \ \overline{\varphi} \overline{\xi}=0,  \ \ \ \overline{\eta}(\overline{\varphi} X)=0
\end{eqnarray*}
and
\begin{eqnarray*}
 g(\overline{\varphi} X,\overline{\varphi} Y)=\overline{g}(X,Y)-\overline{\eta}(X)\overline{\eta}(Y).
\end{eqnarray*}

\medskip

Considering the definition of almost para contact manifolds,   it is possible to give the following definitions:

\begin{definition} \label{Defn:para1}
 A Riemannian manifold $(M,g)$  is called an almost para contact-like manifold if there exists a differentiable structure
$(\varphi,\varphi^{\ast},\xi,\xi^{\ast},\eta,\eta^{\ast})$
consisting of tensor fields $\varphi$ and $\varphi^{\ast}$ of type $(1,1)$,
vector fields $\xi$ and $\xi^{\ast}$, $1$-forms $\eta$ and $\eta^{\ast}$ such that the following relations are satisfied
\begin{eqnarray*}
&&\varphi ^{2}=I-\eta^{\ast} \otimes \xi, \ \ \ \varphi \xi=0, \ \ \ \eta(\xi^{\ast})= 1, \ \ \  \eta\circ \varphi^{\ast}=0, \\
&& (\varphi^{\ast}) ^{2}=I-\eta \otimes \xi^{\ast}, \ \ \ \varphi^{\ast} \xi^{\ast}=0, \ \ \ \eta^{\ast}(\xi)= 1
 \ \ \  \eta^{\ast}\circ \varphi=0,
\end{eqnarray*}
where $I$ denotes the identity map.
\end{definition}

\begin{definition} \label{Defn:para2}
An almost para contact-like manifold $(M,g,\varphi,\varphi^{\ast},\xi,\xi^{\ast},\eta,\eta^{\ast})$ is called a para contact-like metric manifold if
the following relation is satisfied for any $X,Y\in \Gamma(TM)$:
\begin{eqnarray*}
 g(\varphi X,\varphi^{\ast} Y)=g(X,Y)-\eta^{\ast}(X)\eta(Y).
\end{eqnarray*}
\end{definition}

From Lemma \ref{lemm:3}, Lemma \ref{lemm:2}, Definition \ref{Defn:para1} and Definition \ref{Defn:para2}, we get
\begin{corollary}
Every tangential hypersurface of an almost product-like manifold is a para contact-like metric manifold.
\end{corollary}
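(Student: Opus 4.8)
The plan is to show that the six tensorial objects $\varphi,\varphi^{\ast},\xi,\xi^{\ast},\eta,\eta^{\ast}$ produced on a tangential hypersurface $(M,g)$ by the formulas (\ref{eq14})--(\ref{eq16}) already satisfy, verbatim, every axiom of Definition \ref{Defn:para1} together with the compatibility relation of Definition \ref{Defn:para2}. So essentially no new computation is required: the corollary is an assembly of the lemmas of Section 4 (restated here in Section 6) against the two definitions.

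First I would fix the candidate structure: take $\xi=FN$ and $\xi^{\ast}=F^{\ast}N$ as in (\ref{eq14}), let $\varphi$ and $\varphi^{\ast}$ be the tangential parts of $F$ and $F^{\ast}$ acting on $TM$ as in (\ref{eq15})--(\ref{eq16}), and set $\eta(X)=g(X,\xi)$, $\eta^{\ast}(X)=g(X,\xi^{\ast})$ as in (\ref{etaaa}). The remarks immediately following (\ref{etaaa}) already record the two normalizations $\eta(\xi^{\ast})=1$ and $\eta^{\ast}(\xi)=1$ demanded in Definition \ref{Defn:para1}.

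Next I would read off the remaining algebraic axioms directly from Lemma \ref{lemm:3}: equations (\ref{eq21}) and (\ref{eq23}) are exactly the $(1,1)$-tensor identities $\varphi^{2}=I-\eta^{\ast}\otimes\xi$ and $(\varphi^{\ast})^{2}=I-\eta\otimes\xi^{\ast}$; equation (\ref{eq25}) gives $\varphi\xi=0$ and $\varphi^{\ast}\xi^{\ast}=0$; and (\ref{eq22}), (\ref{eq24}) give $\eta^{\ast}\circ\varphi=0$ and $\eta\circ\varphi^{\ast}=0$. This exhausts Definition \ref{Defn:para1}. Finally, the second identity of Lemma \ref{lemm:2}, namely $g(\varphi X,\varphi^{\ast}Y)=g(X,Y)-\eta^{\ast}(X)\eta(Y)$, is precisely the metric condition of Definition \ref{Defn:para2}. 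Hence $(M,g,\varphi,\varphi^{\ast},\xi,\xi^{\ast},\eta,\eta^{\ast})$ is a para contact-like metric manifold.

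Since every required relation has already been established for tangential hypersurfaces, there is no genuine obstacle here; the only thing to watch is bookkeeping --- confirming that the tensor identity $\varphi^{2}=I-\eta^{\ast}\otimes\xi$ in Definition \ref{Defn:para1} is the same statement as the pointwise identity $\varphi^{2}X=X-\eta^{\ast}(X)\xi$ of Lemma \ref{lemm:3}, and that the relations $(\varphi^{\ast})^{\ast}=\varphi$, $(\eta^{\ast})^{\ast}=\eta$ match the way the starred objects are paired. If one wanted a self-contained derivation rather than a citation, the place where the real work lives is Lemma \ref{lemm:3} itself: it is obtained by applying $F$ (resp. $F^{\ast}$) twice to (\ref{eq15})--(\ref{eq16}) and using $F^{2}=I$, $(F^{\ast})^{2}=I$ together with (\ref{eq6})--(\ref{eq7}). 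Under the convention that the earlier results may be assumed, however, the corollary follows at once.
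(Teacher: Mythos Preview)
Your proposal is correct and follows exactly the paper's approach: the paper's own justification is the single line ``From Lemma \ref{lemm:3}, Lemma \ref{lemm:2}, Definition \ref{Defn:para1} and Definition \ref{Defn:para2}, we get'' the corollary, and you have simply spelled out which identity from those lemmas (together with $\eta(\xi^{\ast})=\eta^{\ast}(\xi)=1$ noted after (\ref{etaaa})) matches each axiom.
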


Considering Definition \ref{Defn:para1} and Definition \ref{Defn:para2},
the problem of examining the geometric and physical properties naturally arises in para contact-like manifolds and their submanifolds.



Esra Erkan \newline
Address: Department of Mathematics, Faculty of Science and Art, Harran University, \newline
Sanliurfa, TURKEY. \newline
e-mail: esraerkan@harran.edu.tr

\medskip

Kazuhiko Takano \newline
Address: Department of Mathematics,  School of General Education, Shinshu University, Nagano 390-8621, Japan. \newline
e-mail: ktakano@shinshu-u.ac.jp

\medskip

Mehmet G\"{u}lbahar \newline
Address: Department of Mathematics,  Faculty of Science and Art, Harran University, \newline
Sanliurfa, TURKEY. \newline
email:mehmetgulbahar@harran.edu.tr


\begin{thebibliography}{1}

\bibitem{Adachi} T. Adachi, T., Kimura, M., Maeda, S., \textit{Real hypersurfaces some of whose geodesics are plane curves in non flat complex space form}. Tohoku Math. J. \textbf{57}(2) (2005), 223-230.

\bibitem{Adati} Adati, T., \textit{Submanifolds of an almost product Riemannian manifold.} Kodai Math. J. \textbf{4} (1981), 327-343.

\bibitem{Akbari} Akbari, H.,  Malek, F. \textit{On the hypersurfaces of almost hermitian statistical manifolds.} Bull. Iran. Math. Soc. (2021), 1-16.

\bibitem{Amari-1985-book} Amari, S., \textit{Differential-geometrical methods in statistics, Lecture Notes in Statistics.} vol. 28. Springer-Verlag, New York, 1985.

\bibitem{Aytimur} Aytimur, H.B.,\"{O}zg\"{u}r, C., \textit{Inequalities for submanifolds of Sasaki-like statistical manifolds.} Turk. J. Math. \textbf{42}(6) (2018), 3149-3163.

\bibitem{Wood} Baird, P., Wood, J.C., \textit{Harmonic morphisms between Riemannian manifolds.} Clarendon Press, Oxford, 2003.

\bibitem{Blair} Blair, D.E. Ludden, G.D., \textit{Hypersurfaces in almost contact manifolds.} Tohoku Math. J. (2) \textbf{21}(3) (1969), 354-362.

\bibitem{Calin} Calin, O., Udriste, C., \textit{Geometric modeling in probability and statistics.} Springer, 2014.

\bibitem{Chen} Chen, B.-Y, \textit{Geometry of Submanifolds.} Marcel Dekker, Inc., New York-Basel, 1973.

\bibitem{Chen:2} Chen, B.-Y, Maeda, S. \textit{Hopf hypersurfaces with constant principal curvatures in complex projective or complex hyperbolic spaces}, Tokyo J. Math. \textbf{24}(1) (2001), 133-152.

\bibitem{Deshmukh} Deshmukh, S., Sharfuddin, A., Husain, S.I, \textit{Hypersurfaces of almost product manifolds.} Tamkang J. Math. \textbf{10} (1979), 169-181.

\bibitem{Efron} Efron, B., \textit{Defining the curvature of a statistical problem (with applications to
second order efficiency),.} Ann. Statist. \textbf{3}(6) (1975), 1189–1242.

\bibitem{Furuhata-2009} Furuhata, H. \textit{Hypersurfaces in statistical manifolds.} Differ. Geom. Appl. \textbf{27}(3) (2009), 420-429.

\bibitem{Feng} Feng, W.U., Jiang, Y.,  Zhang, L. \textit{Some results on statistical hypersurfaces of Sasakian statistical manifolds and holomorphic statistical manifolds.} Int. Electron. J. Geom. \textbf{14}(1) (2021), 46-58.


\bibitem{Erken} Erken, I.K., C. Murathan, C., Yazla, A. \textit{Almost cosympletic statistical manifolds.} Quaest. Math. \textbf{43}(2) (2020), 265-282.

\bibitem{Eum} Eum, S.S., \textit{On complex hypersurface in normal almost contact spaces}, Tensor \textbf{19} (1968), 45-50.

\bibitem{Furuhata-2011} Furuhata, H. \textit{Statistical hypersurfaces in the space of Hessian curvature zero.} Differ. Geom. Appl. \textbf{29} Suppl.1 (2011), S86-S90.

\bibitem{Furuhata-2016} Furuhata,H., Hasegawa, I., \textit{Submanifold theory in holomorphic statistical manifolds, in: S. Dragomir, M.H. Shahid, F.R. Al-Solamy (Eds.), Geometry of Cauchy-Riemann Submanifolds.} Springer, Singapore (2016), 179-215.

\bibitem{Gucht} Gucht, J.V.D., Davelaar, J., Hendriks, L., Porth, O.,Olivares, H., Mizuno, Y.,
Fromm, C.M., Falcke, H., \textit{Deep Horizon; a machine learning network that recovers accreting black hole parameters.} Astronomy-Astrophysics \textbf{636} (2020), A94.

\bibitem{Kon} Kon, M., \textit{A characterization of totally $\eta$-umbilical real hypersurfaces and ruled hypersurfaces of a complex space form.} Czech. Math. J. \textbf{58}(4) (2008), 1279-1287.

\bibitem{Murathan}  Murathan, C., \c{S}ahin, B., \textit{A study of Wintgen like inequality for submanifolds in statistical warped product manifolds.} J. Geom. \textbf{109}(2) (2018), 1-18.

\bibitem {Sato} Sato, I., Matsumoto, K., \textit{On P-Sasakian manifolds satisfying certain conditions.} Tensor, New Ser. \textbf{33} (1979), 173-178.

\bibitem{Takano} Takano, K. \textit{Statistical manifolds with almost contact structures and its statistical submersions.} J. Geom. \textbf{85}(1-2) (2006), 171-187.

\bibitem{Takano:1} Takano, K. \textit{Statistical manifolds with almost complex structures.} Tensor, New Ser. \textbf{72}(3) (2010), 225-231.

\bibitem{Vilcu} Vilcu, A.D., Vilcu, G.E.,\textit{Statistical manifolds with almost quaternionic structures and quaternionic K\"{a}hler-like statistical submersions.} Entropy \textbf{17}(9) (2015), 6213-6228.

 \bibitem{Vilcu:2} Vilcu, G.E.,  \textit{Almost product structures on statistical manifolds and para-K\"{a}hler-like statistical submersions.}, Bull. Sci. Math. \textbf{171} (2021), 21.

\bibitem{Vos} Vos, P.W., \textit{Fundamental equations for statistical submanifolds with applications
to the Bartlett correction.}, Ann. Inst. Statist. Math. \textbf{41} (1989), 429–450.

\bibitem{Yano} Yano, K., Kon, M., \textit{Structures on manifolds.}, Ser. Pure Math. 3 World Scientific Publishing Co. Singapore, 1984.

\end{thebibliography}
\end{document}